\makeatletter \renewcommand{\everyentry@}{\vphantom{A_{[]}�{[]}}}
\newtheorem{theorem}{Theorem}[subsection]
\newtheorem{lemma}[theorem]{Lemma}
\newtheorem{prop}[theorem]{Proposition}
\newtheorem{co}[theorem]{Corollary}
\theoremstyle{definition}
\newtheorem{example}[theorem]{Example}
\newtheorem{question}[theorem]{Question}
\theoremstyle{remark}
\newtheorem{remark}[theorem]{Remark}
\newtheorem{Notation}[theorem]{Notations}
\numberwithin{equation}{subsection}
 \def \E{\mathcal E}
\def \Z {\mathbb Z}
\def \inj {\hookrightarrow }
\def \to {\rightarrow}
\def \spec \text{spec}
 \def \M{\mathfrak M}
\def \e { {\underline \epsilon}}
\def \p {\underline \pi}
\def \GL {\t{GL}}
\def \Hom {\textnormal{Hom}}
\def  \cris {\textnormal{cris}}
\DeclareMathOperator{\gal}{Gal}
\def \Q {\mathbb Q}
\def \C {\mathbb C}
\def \t {\textnormal}
\def \Z {\mathbb Z}
\def \ito {\overset  \sim  \to}
\def \O {\mathcal O}
\def \gs {\mathfrak S}
\def \gu {\mathfrak u}
\def \ur {\t{ur}}
\def \D {\mathcal D}
\def \gf {{\mathfrak f}}
\def \N {\mathfrak N}
\def \dR {{\textnormal{dR}}}
\def \v {\vee}
\def \Fil {\t{Fil}}
\def \gt {\mathfrak t}
\def \CM {\mathcal M}
\def \CL {\mathcal L}
\def \CN {\mathcal N}
\def \acris {{A_{\t{cris}}}}
\def \dr {\textnormal{dR}}
\def \R {\mathcal R}
\def \st {\textnormal{st}}
\def \upi {\underline \pi }
\def \< {\left <}
\def \> {\right >}
\def \hR {{\widehat \R} }
\def \hM {{\hat \M}}
\def \upi {{\underline{\pi}}}
\def \unu {{\underline{\nu}}}
\def \inv {{\t{inv}}}
\def \rig {{ {\t{B}^+_\t{rig}}}}
\def \gv {{\mathfrak v}}
\def \fe {{\mathfrak e}}
\begin{document}

\title{Compatibility of Kisin modules for different uniformizers}

\author{Tong Liu}
\address{Department of Mathematics, Purdue University, Indiana, 47907, USA.}
\email{tongliu@math.purdue.edu}


\subjclass{Primary  14F30,14L05}



\keywords{semi-stable representations, Kisin modules}

\begin{abstract}
Let $p$ be a prime and $T$ a lattice inside a semi-stable representation $V$. We prove that Kisin modules associated to $T$   by selecting different uniformizers are isomorphic after tensoring a  subring in $W(R)$. As  consequences, we show that  several lattices inside the filtered $(\varphi, N)$-module of $V$ constructed from Kisin modules are  independent on the choice of uniformizers. Finally we use a similar strategy to show that the Wach module can be recovered from the $(\varphi, \hat G)$-module associated to $T$ when $V$ is crystalline and the base field is unramified.

\end{abstract}

\maketitle


\footnote{The author is partially supported by NSF grant DMS-0901360.}
\tableofcontents
\section{Introduction}
Let $k$ be a perfect field of characteristic $p$, $W(k)$ its ring of Witt vectors,
$K_0 = W(k)[1/p]$, $K/K_0$ a finite totally ramified extension,  $G_K := \gal(\overline K /K)$.

To understand the $p$-adic Hodge structure of $G_K$-stable $\Z_p$-lattices in semi-stable representations, the method of Kisin modules is powerful. Recall the definition of Kisin modules in the following: We  \emph{fix} a  uniformiser $\pi\in K$ with Eisenstein polynomial $E(u)$. Put $\gs := W(k)\llbracket u\rrbracket$. $\gs$ is equipped with a Frobenius endomorphism $\varphi$ via $u\mapsto u^p$ and the natural Frobenius on $W(k)$. A \emph{Kisin module of height $r$} is a finite free $\gs$-module $\M $ with $\varphi$-semi-linear endomorphism $\varphi_\M : \M \to \M $ such that $E(u)^r \M \subset \langle\varphi_\M (\M)\rangle $, where $\langle \varphi_\M  (\M)\rangle$ is the $\gs $-submodule of $\M$ generated by $\varphi_\M(\M )$. By the result of Kisin \cite{kisin2},  for any $G_K$-stable $\Z_p$-lattice $T$ inside a semi-stable representation $V$ with Hodge-Tate weights in $\{0, \dots, r\}$, there exists a unique Kisin module $\M (T)$ of height $r$ attached to $T$ (see \S 2.1 for more precise meaning of this sentence).

It is obvious that the construction of Kisin modules depends on the choice of unformizer $\pi$. If we choose another uniformizer $\pi'$ of $K$ then we get another $\M' (T)$. A natural question is: what is the relationship between $\M(T)$ and $\M'(T)$?

It turns out that each choice of uniformizer $\pi$ determines an embedding $\gs\inj W(R)$ via $u \mapsto [\underline \pi]$ (see \S2.1 for details of the definition of $W(R)$ and $[\upi]$). We denote $\gs_\upi$ and $\gs_{\upi'}$ for the image of embedding determined by $\upi$ and $\upi'$. By the main result of \cite{liu4}, there exists a $G_K$-action on $W(R) \otimes_{\varphi, \gs_\upi} \M (T)$ which commutes with $\varphi_\M$. In this paper, we prove the following:

\begin{theorem} \label{main1}There exists a $W(R)$-linear isomorphism $$W(R) \otimes_{\varphi, \gs_\upi}\M(T) \simeq W(R) \otimes_{\varphi, \gs_{\upi'}}\M'(T)$$ compatible with $\varphi$-actions and $G_K$-actions on the both sides.
\end{theorem}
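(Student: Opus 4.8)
The plan is to realize both sides as $W(R)$-lattices inside a single $\varphi$- and $G_K$-module built functorially from $V$, and then to prove that these two lattices coincide; the desired isomorphism is then the identity of the ambient module restricted to them.

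Write $\hM_\upi := W(R)\otimes_{\varphi,\gs_\upi}\M(T)$ and $\hM_{\upi'}:=W(R)\otimes_{\varphi,\gs_{\upi'}}\M'(T)$, each equipped with its $\varphi$- and $G_K$-action by \cite{liu4}. Kisin's comparison isomorphism \cite{kisin2}, together with the construction of the $G_K$-action in \cite{liu4}, provides a $\varphi$- and $G_K$-equivariant injection of $\hM_\upi$ into a period module $D_V$ obtained from $D_{\st}(V)$ by extension of scalars (so that $D_V[1/p]$ recovers $V$ tensored with a suitable period ring), and likewise for $\hM_{\upi'}$; the crucial feature is that $D_V$ and these maps depend only on $V$, not on the choice of uniformizer. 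Consequently I may regard $\hM_\upi$ and $\hM_{\upi'}$ as two $\varphi$- and $G_K$-stable $W(R)$-lattices inside $D_V$, and it suffices to prove $\hM_\upi=\hM_{\upi'}$ as submodules of $D_V$.

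For this, by the symmetry between $\upi$ and $\upi'$ it is enough to show $\hM_\upi\subseteq\hM_{\upi'}$. I would argue this from a characterization, in the spirit of \cite{liu4}, of $\hM_{\upi'}$ as a distinguished $\varphi$-stable $W(R)$-lattice in $D_V$ satisfying a height-$r$ condition relative to $E_{\upi'}(u)$, together with a maximality property singling it out. The key point is the uniformizer-independence of the relevant ideal: under $\gs_{\upi'}\inj W(R)$ followed by $\varphi$, the Eisenstein polynomial $E_{\upi'}(u)$ generates, up to a unit of $W(R)$, the kernel of $\theta\circ\varphi$ (where $\theta:W(R)\to\O_{\overline K}^{\wedge}$ is Fontaine's map), and this ideal is the same for every uniformizer. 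Hence $\hM_\upi$, which is $\varphi$-stable and satisfies the height-$r$ condition with $E_\upi(u)$, automatically satisfies the defining conditions of $\hM_{\upi'}$, and the maximality property forces $\hM_\upi\subseteq\hM_{\upi'}$.

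The main obstacle is that everything here must be done \emph{integrally}, not up to isogeny: Kisin modules are rigid objects whose usual description passes through \'etale $\varphi$-modules over $\O_{\mathcal E}$, and for $\upi$ versus $\upi'$ these are built from different subrings $\gs_\upi,\gs_{\upi'}$ of $W(R)$ which are not directly comparable. Making the argument rigorous requires an honest uniformizer-independent integral grip on $\M(T)$ inside a period ring; I expect to obtain it by working inside $\acris$ (or the subring $\hR\subset W(R)$) and using that Kisin's comparison with $D_{\st}(V)$ and its monodromy operator $N_{\nabla}$ refines to integral coefficients, and, where necessary, by reducing modulo $p^n$ to invoke the theory of torsion Kisin modules, in which the lattices in question are pinned down by finite data. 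Transporting $G_K$-equivariance through this integral comparison, and checking that the resulting identification genuinely descends to $W(R)$ rather than to $W(\mathrm{Fr}\,R)$ or merely to $W(R)[1/p]$, is the part requiring the most care.
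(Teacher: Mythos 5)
Your overall framing agrees with the paper: both $W(R)\otimes_{\varphi,\gs_\upi}\M(T)$ and $W(R)\otimes_{\varphi,\gs_{\upi'}}\M'(T)$ are embedded, $\varphi$- and $G_K$-equivariantly, into the common ambient module $T^\v\otimes_{\Z_p}W(R)$ (via the maps $\hat\iota$, $\hat\iota'$ of Theorem \ref{review}(2)), and the theorem is reduced to the equality of the two lattices there. But the step that is supposed to prove this equality is missing, and the route you sketch for it does not work. You propose to characterize $W(R)\otimes_{\varphi,\gs_{\upi'}}\M'$ inside the ambient module as a $\varphi$-stable $W(R)$-lattice satisfying a height-$r$ condition, singled out by a ``maximality property,'' and then to note that the ideal generated by $E_{\upi'}([\upi'])$ is uniformizer-independent. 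No such characterization is available: since $R$ is perfect, $\varphi$ is bijective on $W(R)$, so the full lattice $T^\v\otimes_{\Z_p}W(R)$ is itself $\varphi$-stable and satisfies any height-$r$ condition trivially, and it strictly contains the Kisin lattice in general; hence ``maximality among height-$r$ $\varphi$-stable lattices'' selects the wrong object. The rigidity of Kisin modules is a statement about $\gs$-structures (via \'etale $\varphi$-modules over $\O_\E$), and it does not descend to an abstract-property characterization of the $W(R)$-lattice alone -- which is exactly why the theorem requires an argument. Your closing paragraph acknowledges this (``an honest uniformizer-independent integral grip'') but only gestures at $\acris$, $\hR$ and torsion Kisin modules without supplying the mechanism.

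What the paper actually does at this point is different and is the content you would need. One passes to the Breuil modules $\D=S_\upi[\frac1p]\otimes_{\varphi,\gs_\upi}\M$ and $\D'$ and uses Breuil's unique $(\varphi,N)$-equivariant section $s:D\to\D$: comparing the two sections through $D_\st(V)$ gives the explicit change of basis $(e_1,\dots,e_d)=(e'_1,\dots,e'_d)\sum_{n\ge0}\gamma_n(-\log([\unu]))\bar N^n$, whence the matrix $X$ relating an $\gs_\upi$-basis of $\M$ to an $\gs_{\upi'}$-basis of $\M'$ has entries in $B^+_\cris$ (in fact in $\tilde S_{\upi,\upi'}$), compatibly with $\iota$, $\iota'$ (Lemma \ref{lem:3.1.1} and Theorem \ref{thm: big-review}). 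Integrality is then forced not by maximality but by the quantitative height bound of Theorem \ref{review}(2), $(\varphi(\gt'))^r\bigl(T^\v\otimes_{\Z_p}W(R)\bigr)\subset\hat\iota'(W(R)\otimes_{\varphi,\gs_{\upi'}}\M')$, combined with Lemma \ref{lem: key}: $\varphi(\gt')$ generates the principal ideal $I^{[1]}W(R)$, and if $x\in B^+_\cris$ with $\varphi(\gt')x\in W(R)$ then $x\in W(R)$; applying this to $(\varphi(\gt'))^rX$ shows $X$ (and by symmetry $X^{-1}$) has entries in $W(R)$, giving the asserted isomorphism. So the gap in your proposal is precisely the absence of this pair of ingredients -- the Breuil-module comparison producing a period-ring-valued transition matrix, and the $\varphi(\gt)^r$ divisibility argument in $I^{[1]}W(R)$ replacing the nonexistent maximality property.
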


In fact,  $W(R)$ in the above isomorphism can be replaced by much smaller ring $\tilde \gs _{\upi, \upi'}$, and even smaller ring $\gs _{\upi, \upi'}$ when $V$ is crystalline. See Theorem \ref{thm: main1.5} for more details.
It turns out that we can extend Theorem \ref{main1} to discuss the relation between Kisin modules and Wach modules (\cite{Ber}). Assume that $K= K_0$ is unramified and let $T$ be a $G_K$-stable $\Z_p$-lattice inside a crystalline representation. Then we can attach Wach module $\N(T)$ and Kisin module $\M(T)$ to $T$.  Let $\zeta_{p^n}$ be a primitive $p^n$-th root of unity.  Set $K_{p^\infty}:= \bigcup^\infty_{n =1}K(\zeta_{p^n})$ and $H_{p ^\infty}:= \gal (\overline K / K_{p^\infty})$. The following Theorem describes a direct  relation between the Kisin module and the Wach module.
\begin{theorem}\label{Thm: Wach&Kisin}

$\N(T) \simeq (\hR \otimes_{\varphi, \gs} \M(T)) ^{H_{p^\infty}}. $
\end{theorem}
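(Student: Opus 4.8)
The plan is to realise both sides as $A^{+}$-lattices inside one common module and then apply Berger's characterisation of Wach modules. Write $\hM := \hR\otimes_{\varphi,\gs}\M(T)$. By the main result of \cite{liu4} (recalled in the introduction), $W(R)\otimes_{\varphi,\gs}\M(T)$ carries a $G_K$-action commuting with $\varphi_{\M}$, it is identified $G_K$- and $\varphi$-equivariantly with $W(R)\otimes_{\Z_p}T$, this action preserves the $\hR$-lattice $\hM$, and on $\hM$ it factors through $\widehat{G}:=\gal(\widehat K/K)$, where $\widehat K:=\bigcup_{n}K(\zeta_{p^{n}},p^{1/p^{n}})$ (recall $K=K_{0}$, so $p$ is a uniformiser and $E(u)=u-p$). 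The image of $H_{p^{\infty}}$ under $G_K\to\gal(\widehat K/K)$ is the normal subgroup $\widehat H:=\gal(\widehat K/K_{p^{\infty}})$, with $\widehat G/\widehat H\simeq\gal(K_{p^{\infty}}/K)=:\Gamma$; hence $(\hM)^{H_{p^{\infty}}}=(\hM)^{\widehat H}$, and this module carries commuting $\Gamma$- and $\varphi$-actions. Finally, since $\widehat H$ fixes every $\zeta_{p^{n}}$ it fixes $[\underline{\epsilon}]$, hence fixes $\pi_{\mathrm{cyc}}:=[\underline{\epsilon}]-1$ and the cyclotomic subring $A^{+}:=W(k)\llbracket\pi_{\mathrm{cyc}}\rrbracket\subset W(R)$; thus $A^{+}\subseteq\hR^{\widehat H}$ and $(\hM)^{\widehat H}$ is in particular an $A^{+}$-module.

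Next I would set up the comparison. By Berger's theory \cite{Ber}, the Wach module $\N(T)$ is realised inside the \'etale $(\varphi,\Gamma)$-module $D(T)$ of $T$ as the unique finite free $A^{+}$-lattice, stable under $\varphi$ and $\Gamma$, of full rank $d:=\dim_{\Q_{p}}V$, spanning $D(T)$, of height $\le r$, and on which $\Gamma$ acts trivially modulo $\pi_{\mathrm{cyc}}$ --- this last property being exactly the crystallinity of $V$ together with $K=K_{0}$. Transporting $D(T)$ across the isomorphism $W(R)\otimes_{\varphi,\gs}\M(T)\simeq W(R)\otimes_{\Z_{p}}T$ and passing to $\widehat H=H_{p^{\infty}}$-invariants exhibits $\N(T)$ and $(\hM)^{\widehat H}$ as two $\varphi$- and $\Gamma$-stable $A^{+}$-submodules of one common module. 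By Berger's uniqueness it then suffices to verify that $(\hM)^{\widehat H}$ is a finite free $A^{+}$-module of rank $d$, spanning $D(T)$, of height $\le r$, with $\Gamma$ acting trivially modulo $\pi_{\mathrm{cyc}}$.

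To check these properties I would analyse the $\widehat H$-action by the same mechanism used for Theorem \ref{main1}. Let $\tau$ topologically generate $\widehat H$; on $\hR$ it acts by $\tau(u)=(1+\pi_{\mathrm{cyc}})u$, and on $\hM$ it acts unipotently in the precise quantitative form established in \cite{liu4} (so that $\tau$ is trivial on $\hM$ modulo a suitable $t$-divisible submodule, $t=\log[\underline{\epsilon}]$ being the usual period). Combining this unipotence with the finite $E(u)$-height of $\M(T)$, a convergent successive-approximation argument --- the same engine that produces the isomorphism of Theorem \ref{main1} --- solves $\tau(x)=x$ termwise and yields a basis $x_{1},\dots,x_{d}$ of $(\hM)^{\widehat H}$ over $A^{+}$ whose matrix of $\varphi$ lies in $\mathrm{M}_{d}(A^{+})$ and has height $\le r$; the crystalline hypothesis then forces the residual $\Gamma$-action on $(\hM)^{\widehat H}/\pi_{\mathrm{cyc}}(\hM)^{\widehat H}$ to be trivial, since this quotient recovers a lattice in $D_{\cris}(V)$. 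Keeping track of the Frobenius twist in $\hR\otimes_{\varphi,\gs}\M(T)$ against the cyclotomic $\varphi$ on $A^{+}$ is a bookkeeping matter only.

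The \emph{main obstacle} is this last step: a priori $(\hM)^{\widehat H}$ is merely a module over $\hR^{\widehat H}$, which is larger than $A^{+}$, so the substance of the argument is to pin the invariants down as a finite free $A^{+}$-module of the correct rank and bounded height. This is precisely where the structure of the $\tau$-action and the crystallinity of $V$ are indispensable; the remaining steps --- descent along $W(R)\otimes_{\varphi,\gs}\M(T)\simeq W(R)\otimes_{\Z_{p}}T$ and the final matching --- are formal consequences of Berger's characterisation of Wach modules.
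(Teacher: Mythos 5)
Your overall strategy---descend $\hR \otimes_{\varphi, \gs} \M$ along $H_{p^\infty}$ directly and then identify the invariants with $\N(T)$ via Berger's uniqueness characterisation of Wach lattices---is genuinely different from the paper's route, but as written it has a real gap: the decisive step, namely that $(\hR \otimes_{\varphi, \gs} \M)^{H_{p^\infty}}$ is a finite free $\gs_\e$-module of rank $d$, of $q$-height $\leq r$, with $\Gamma$ acting trivially modulo $v = [\e]-1$, is exactly what needs proof, and your sketch does not supply it. The ``engine'' you invoke is mischaracterised: Theorem \ref{thm: main1.5} is not proved by a successive-approximation solution of $\tau(x)=x$ against a unipotent action; it is proved by comparing both lattices to $D_\st(V)$ inside $B^+_\cris$ (Lemma \ref{lem:3.1.1}, via Theorem \ref{thm: big-review}) and then descending the change-of-basis matrix to $W(R)$ using Lemma \ref{lem: key}. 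Nothing in that argument produces $\tau$-fixed elements of $\hM$, and taking $H_{p^\infty}$-invariants of an unknown $\hR$-lattice raises genuine size questions in both directions: a priori the invariants could fail to span (rank $< d$) or could live over a ring strictly bigger than $\gs_\e$ (you yourself note $\hR^{H_{p^\infty}}$ is only bounded below by $\gs_\e$ a priori), and no cohomological or explicit descent input is given to rule either out. Likewise the assertion that $\Gamma$ acts trivially on the reduction modulo $v$ ``because this quotient recovers a lattice in $D_\cris(V)$'' is unargued.

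The paper avoids descending an unknown module altogether: it first proves the lattice identity $\hat \iota(\hR \otimes_{\varphi, \gs} \M) = \iota_{\t{Wa}}(\hR \otimes_{\gs_\e} \N)$ inside $T^\v \otimes_{\Z_p} W(R)$, by comparing both bases to a $K_0$-basis of $D_\cris(V)$ (Theorem \ref{thm: WachBerger}(3) on the Wach side, the argument of \S\ref{subsec: 3.1} on the Kisin side) and then using the \emph{integral} bound of Theorem \ref{thm: WachBerger}(2)---cokernel of $\iota_{\t{Wa}}$ killed by $v^r$, with $v$ a generator of $I^{[1]}W(R)$---together with Lemma \ref{lem: key} to place the transition matrix $X$ and $X^{-1}$ in $W(R)$, hence in $\hR$. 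Only then does it take invariants, at which point all that remains is the ring computation $\hR^{H_{p^\infty}} = \gs_\e$ (Lemma \ref{lem: invariants}, resting on the uniqueness of expansions from Lemma 7.1.2 of \cite{liu2}). These two ingredients---Berger's integral $v^r$-bound and the determination of $\hR^{H_{p^\infty}}$---are precisely what your proposal lacks; with them the appeal to uniqueness of Wach modules becomes unnecessary, and without them your freeness, rank, height and residual-triviality claims remain unsupported.
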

Here $\hR\subset W(R)$ is a subring constructed in \S2.2, \cite{liu4} and $\hR \otimes_{\varphi, \gs } \M \subset W(R) \otimes_{\varphi, \gs}\M$ was proved to be $G_K$-stable \emph{loc. cit.} (also see \S \ref{subsec:review}).

Theorem \ref{main1}  can be used to understand lattices in the filtered $(\varphi, N)$-modules attached to semi-stable representations of $G_K$.
More precisely,  Let $V$ be a de Rham representation of $G_K$ and $T \subset V$ a $G_K$-stable $\Z_p$-lattice. It is well known (\cite{Ber0}) that $V$ is semi-stable over a finite extension $K' /K$. By using the Kisin module attached to $T|_{G_{K'}}$, we can construct various lattices in either $D_{\st, K'} (V):= ( V ^\v \otimes_{\Q_p} B_\st)^{G_{K'}}$ or $D_\dR (V):= (V^\v \otimes_{\Q_p} B_\dR )^{G_K}$, where $V^\v$ denotes the dual of $V$. One consequence of Theorem \ref{main1} is that the constructions of  such  lattices are independent on the choice of $\pi$. In the end, we also discuss several lattices (inside the filtered $(\varphi, N)$-module) whose constructions are independent on Kisin's theory. But they are useful to discuss the $p$-adic Hodge properties for ($p$-adic completion of) the direct limit of de Rham representations. In particular, we hope these will be  useful to understand those representations discussed in \cite{Emerton1} and \cite{Cais}.

The arrangement of this paper  is as follows: In \S 2, we setup notations and summarize the facts needed for the proof of Theorem \ref{main1} and give a more precise version of Theorem \ref{main1}. We give the proof of Theorem \ref{main1} and Theorem \ref{Thm: Wach&Kisin} in \S 3. We also show that the compatibility of Kisin modules when base changes (see Theorem \ref{thm: main1.6}).  \S4 are devoted to  discuss various lattices inside filtered $(\varphi, N)$-modules attached to potentially semi-stable representations. We show that two types of lattices constructed from Kisin's theory do not depend on the choice of uniformizers and they are compatible with base change. In \S4.3, we show that several lattices (constructed without using Kisin' theory) may help us to understand the $p$-adic completion of direct limit of de Rham representations. In particular, we hope that our strategy is useful to those representations studied in \cite{Emerton1}. The last section is the Errata of \cite{liu-wd}.

{\bf Acknowledgement:} The author would like to thank  Brian Conrad for raising  this question  and Bryden Cais for very useful comments.

\section{Preliminary and main results}
\subsection{Kisin modules and $(\varphi, \hat G)$-modules}\label{subsec:review}
We set up notations and recall some facts on (integral) $p$-adic Hodge theory in this section. We fix a nonnegative integer $r$ throughout the paper.  Let $V$ be a semi-stable representation of $G_K$ with Hodge-Tate weights in $\{0, \dots, r\}$. Write $V^\v$ the dual of $V$.  By the well-known theorem of Fontaine and Colmez, the functor $$V \mapsto D_\st (V): = (V^\v \otimes_{\Q_p} B_\st) ^{G_K}$$ induces an \emph{anti-equivalence} between the category of semi-stable representations with Hodge-Tate weights in $\{ 0, \dots ,r\}$ and the category of weakly admissible filtered $(\varphi, N)$-modules $(D, \varphi, N, \{\Fil^i D_K\})$ with $\Fil^0D_K = D_K$ and $\Fil ^{r+1} D_K= \{0\}$. Here $D_K := K \otimes_{K_0}D$ as usual.
The readers should be careful that we use the \emph{contravariant} version of $D_\st$, which were denoted by $D^*_\st$ in many papers. But the current version of $D_\st$ is more convenient for integral theory.

Let $R= \varprojlim \O_{\overline K }/p$ where the transition maps are
given by Frobenius.  By the universal property of  the Witt vectors
$W(R)$ of $R$, there is a unique surjective projection map $\theta :
W(R) \to \widehat \O_{\overline K}$ to the $p$-adic completion $ \widehat \O_{\overline K}$ of
$\O_{\overline K}$, which lifts the  projection $R \to \O_{\overline K}/ p$
onto the first factor in the inverse limit. We denote by $A_{\t {cris}}$ the $p$-adic completion
of the divided power envelope of $W(R)$ with respect to
$\t{Ker}(\theta)$.
 As usual, we write $B_\t{cris}^+=
A_\t{cris}[1/p]$ and   $B_\t{dR}^+$ the $\t{Ker}(\theta)$-adic completion of $W(R)[1/p]$. For any subring $A \subset B^+_\t{dR}$, we define filtration on $A$ by  $\t{Fil}^i A = A \cap (\t{Ker}(\theta))^iB^+_\t{dR}$.


Now select a uniformizer $\pi$   of $K$. Let $E(u)\in W(k)[u]$  be the Eisenstein polynomial of $\pi$.
Let $\pi_n\in \overline K $ be a $p^n$-th root of $\pi$, such that
$(\pi_{n+1})^p=\pi_n$; write $\underline \pi=(\pi_n)_{n\geq 0}\in R$
and let $[\underline \pi ]\in W(R)$ be the Techm\"uller
representative. We embed the $W(k)$-algebra $W(k)[u]$ into
$W(R)\subset\acris$ by the map $u\mapsto [\underline \pi]$.  Recall $\gs= W(k)[\![u]\!]$. This
embedding extends to the  embedding $\gs \inj W(R)$ which  are  compatible with
Frobenious endomorphisms.

We denote  by $S$  the $p$-adic completion of the divided power
envelope of $W(k)[u]$ with respect to the ideal generated by $E(u)$. Write $S_{K_0}:= S[\frac{1}{p}]$.
There is a unique map (Frobenius) $\varphi_S: S \to S$ which extends
the Frobenius on $\gs$. We write $N_S$ for the $K_0$-linear derivation on $S_{K_{0}}$ such that
$N_S(u)= -u$. Let  $\Fil ^n S\subset S $ be the $p$-adic completion of the ideal generated by $\gamma_i (E(u)):= \frac{E(u)^i}{i!}$ with $ i \geq n$. One can show that the embedding $W(k)[u] \to W(R)$ via $u \mapsto [\upi]$ extends to the embedding $S \inj A_\cris$ compatible with Frobenius $\varphi$ and filtration (note that $E([\upi])$ is a generator of $\Fil ^1 W(R)$). We set $B^+_\st : = B^+_\cris [\gu]\subset B^+_\dR$ with
$\gu:=\log( [\p])$.

Let $K_\infty := \bigcup\limits_{n=0}^\infty K(\pi_n)$ and $\hat K$ its Galois closure over $K$.
Then $\hat K =\bigcup \limits_{n=1}^\infty K_\infty (\zeta_
{p^n})$ with $\zeta _{p^n} $ a primitive $p^n$-th root of unity.
Write $G_\infty:= \gal (\overline K / K_\infty)$,  $K_{p^\infty}=  \bigcup \limits _{n=1}^\infty
K(\zeta_{p^n})$,
$G_{p^\infty} := \gal(\hat K/ K_{p^\infty})$, $H_{K}:=
\gal (\hat K/ K_\infty)$ and $\hat G: =\gal (\hat K/K) $. For any $ g \in G_K$, $\e (g):= \frac{g(\upi)}{\upi}$ is a cocycle with value in $R$.
 Set  $\e:= (\zeta_{p^i}) _{i \geq 0} \in R$ and $t:= -\log([\e])\in \acris$ as usual.

As a subring of $\acris$, $S$ is not stable under the action of $G_K$,
though $S$ is fixed by $G_\infty$. Define a subring inside $B^+_\cris$:

$$\R_{K_0}: =\left\{x = \sum_{i=0 }^\infty f_i t^{\{i\}}, f_i \in S_{K_0} \t{
and } f_i \to 0\t{ as }i \to +\infty \right\}, $$ where $t ^{\{i\}}= \frac{t^i}{p^{\tilde q(i)}\tilde q(i)!}$ and $\tilde q(i)$ satisfies $i = \tilde q(i)(p-1) +r(i)$ with $0 \leq r(i )< p-1$. Define $\hR := W(R)\cap \R_{K_0}$. One
can show that $\R_{K_0}$ and $\hR$ are stable under the $G_K$-action and the $G_K$-action factors through  $\hat G$ (see \cite{liu4} \S2.2). Let $I_+R$ be the maximal ideal of $R$ and $I_+ \hR = W(I_+R) \cap \hR$. By Lemma 2.2.1 in \cite{liu4}, one have
$\hR / I_+\hR\simeq \gs/ u \gs  = W(k)$.

Recall that a \emph{Kisin module of height $r$} is a finite free $\gs$-module $\M $ with $\varphi$-semi-linear endomorphism $\varphi_\M : \M \to \M $ such that $E(u)^r \M \subset \langle\varphi_\M (\M)\rangle $, where $\langle \varphi_\M (\M)\rangle$ is the $\gs $-submodule of $\M$ generated by $\varphi_\M(\M )$. A morphism between two Kisin modules is just an  $\gs$-linear map compatible with Frobenius. As a subring of $\acris$ via $u \to [\upi]$, $\gs$ and  $ S$ are  not stable under the action of $G_K$, but stable under $G_\infty$. This allows us to define a functor $T_\gs$ from the category of Kisin modules to the category of finite free $\Z_p$-representations of $G_\infty$ via the following formula:
$$T_\gs (\M) : = \Hom_{\gs, \varphi} (\M , W(R)).$$
See \S2.2 in \cite{liu2} for more details of $T_\gs$. In particular, by Proposition 2.2.1 $loc. cit.$, we can change $\gs^\ur$ to $W(R)$ in the definition of $T_\gs$.

Let us review the theory of $(\varphi, \hat G)$-modules, which is a variation of that of Kisin modules.  Following \cite{liu4}, a \emph{finite free $(\varphi, \hat
G)$-module of height $ r$} is a triple $(\M , \varphi, \hat G)$ where
\begin{enumerate}
\item $(\M, \varphi_\M)$ is a finite free Kisin module of height $ r$;
\item $\hat G$ is a $\hR$-semi-linear $\hat G$-action on $\hat \M: =\hR
\otimes_{\varphi, \gs} \M$;
\item $\hat G$ commutes with $\varphi_{\hM}$ on $\hM$, \emph{i.e.,} for
any $g \in \hat G$, $g \varphi_{\hM} = \varphi_{\hM} g$;
\item regard $\M$ as a $\varphi(\gs)$-submodule in $ \hM $, then $\M
\subset \hM ^{H_{K}}$;
\item $\hat G$ acts on $W(k)$-module $M:= \hM/I_+\hR\hM\simeq \M/u\M$ trivially.
\end{enumerate}

 A morphism between two finite  free $(\varphi, \hat G)$-modules is a morphism of Kisin modules that  commutes with $\hat G$-action  on $\hM$'s.
  For a finite free  $(\varphi, \hat G)$-module  $\hM=(\M, \varphi, \hat G)$, 
  we can associate a $\Z_p[G_K]$-module:
\begin{equation}\label{hatT}
\hat T (\hM) := \t{Hom}_{\hR, \varphi}(\hR \otimes_{\varphi, \gs}
\M, W(R)),
\end{equation}
where $G_K$ acts on $\hat T(\hM)$ via $g (f)(x) = g (f(g^{-1}(x)))$
for any $g \in G_K$ and $f \in \hat T(\hM)$.

By Example 2.3.5 in
\cite{liu2}, there exists an element $\gt \in W(R)$ such that $\gt \mod p \not = 0$,
$\varphi (\gt)= c^{-1}_0E(u) \gt$, where $c_0p$ is the constant term of
$E(u)$. Such $\gt$ is unique up to $\Z_p ^\times$.  The following theorem summarizes main results in \cite{liu4}.

\begin{theorem}[\cite{liu4}]\label{review}
\begin{enumerate}
\item  $\hat T$ induces an anti-equivalence between the category of finite free
$(\varphi, \hat G)$-modules of height $r$ and the category of
$G_K$-stable $\Z_p$-lattices in semi-stable representations of $G_K$
with Hodge-Tate weights in $\{0, \dots, r\}$.
\item $\hat T $ induces a natural $W(R)$-linear injection
\begin{equation}\label{Eq: iota}
\hat \iota: \  W(R)\otimes_{\varphi, \gs} \M  \longrightarrow \hat T^\v
(\hat \M) \otimes_{\Z_p} W(R),
\end{equation}
 such that $\hat \iota$ is  compatible with Frobenius and $G_K$-actions on both sides. Moreover, $(\varphi(\gt))^r (\hat T^\v(\hM) \otimes
_{\Z_p}W(R)) \subset\hat \iota( W(R)\otimes_{\varphi, \gs} \M).$
\item There exists a natural isomorphism $T_\gs(\M) \ito \hat T (\hM)$ of $\Z_p[G_\infty]$-modules.
\end{enumerate}
\end{theorem}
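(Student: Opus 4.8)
The plan is to establish the three assertions by bootstrapping from Kisin's results --- the full faithfulness of $T_\gs$ and the existence of $\M(T)$ for lattices in semi-stable representations (\cite{kisin2}) --- together with the properties of the ring $\hR$ recalled above; this is the strategy of \cite{liu4}, \cite{liu2}. I would dispose of assertion (3) first, as it is essentially formal. The inclusion $\M\inj\hM$ of condition (4) is semilinear for the composite $\gs\xrightarrow{\varphi}\gs\inj\hR$, so restriction sends an $\hR$-linear, $\varphi$-equivariant map $\hM\to W(R)$ to a map out of $\M$; conversely, given $f\in T_\gs(\M)=\Hom_{\gs,\varphi}(\M,W(R))$ one sets $\hat f(a\otimes x):=a\,\varphi(f(x))$. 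One checks that this is compatible with the tensor relation $a\varphi(s)\otimes x=a\otimes sx$, is $\hR$-linear and $\varphi$-equivariant, and is bijective --- because $\varphi$ is an automorphism of $W(R)$, $R$ being perfect --- with inverse $\hat f\mapsto\bigl(x\mapsto\varphi^{-1}(\hat f(1\otimes x))\bigr)$. The bijection is $G_\infty$-equivariant because $G_\infty$ fixes $\gs$ and acts on $W(R)$ commuting with $\varphi$, so the two Galois formulas agree.

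For assertion (1), I would first check that $\hat T$ is well defined: from (\ref{hatT}), $\hat T(\hM)$ carries a $G_K$-action and is $\Z_p$-free of rank $\mathrm{rk}_\gs\M$ by assertion (3) and Kisin's theory. To see that $\hat T(\hM)[1/p]$ is semi-stable over $K$ with Hodge--Tate weights in $\{0,\dots,r\}$, I would pass to a Breuil module: form $\mathcal D:=S\otimes_{\varphi,\gs}\M$ with its induced Frobenius, extract a monodromy operator $N$ on $\mathcal D[1/p]$ from the $\hat G$-action (using condition (3), that $\hat G$ commutes with $\varphi_{\hM}$) and a filtration on $D_K$ for the resulting filtered $(\varphi,N)$-module $D$, and check weak admissibility, so that Fontaine--Colmez produces a semi-stable representation $V=\hat T(\hM)[1/p]$ with $D_{\st}(V)\simeq D$. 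Full faithfulness of $\hat T$ then follows from Kisin's full faithfulness of $T_\gs$ together with assertion (3): a $G_K$-morphism between the associated representations restricts to a $G_\infty$-morphism, hence descends to a unique morphism $h$ of Kisin modules, and $\hR\otimes_{\varphi,\gs}h$ is automatically $\hat G$-equivariant since the $\hat G$-action on each $\hM$ is the restriction of the tautological $G_K$-action on $\hat T(\hM)^\v\otimes_{\Z_p}W(R)$.

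Essential surjectivity in assertion (1) is the substantive step and is where the work of \cite{liu4} lies. Given a $G_K$-stable lattice $T$ in a semi-stable $V$, Kisin produces $\M=\M(T)$; using the $\varphi$-commuting $G_K$-action on $W(R)\otimes_{\varphi,\gs}\M\subset T^\v\otimes_{\Z_p}W(R)$ recalled in the introduction, the crux is to show that it stabilizes the $\hR$-submodule $\hM=\hR\otimes_{\varphi,\gs}\M$ and is trivial on $\hM/I_+\hR\,\hM$ (conditions (2) and (5) of the definition); the remaining conditions then follow, and $(\M,\varphi,\hat G)$ is a $(\varphi,\hat G)$-module with $\hat T(\hM)\simeq T$. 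As for assertion (2), I would define $\hat\iota$ by linearizing the evaluation pairing: after identifying $\hat T(\hM)$ with $T_\gs(\M)$ via assertion (3), set $\hat\iota(a\otimes x)(f)=a\,\varphi(f(x))$, which is $W(R)$-linear, $\varphi$-equivariant, and --- on unwinding the $G_K$-action on $\hat T$ --- $G_K$-equivariant. Injectivity follows by base changing to $W(\Fr R)$, over which the pairing between $\M$ and $T_\gs(\M)$ becomes perfect (Fontaine). For the integral bound $(\varphi(\gt))^r\,\bigl(\hat T^\v(\hM)\otimes_{\Z_p}W(R)\bigr)\subseteq\hat\iota\bigl(W(R)\otimes_{\varphi,\gs}\M\bigr)$, I would, as in \cite{liu2}, compare $\hat\iota$ with the linearized Frobenius $\varphi^*\M\to\M$ (where $\varphi^*\M:=\gs\otimes_{\varphi,\gs}\M$), whose cokernel is killed by $E(u)^r$ by the height condition, using the relation $\varphi(\gt)=c_0^{-1}E([\p])\,\gt$.

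The main obstacle is exactly the first half of the previous paragraph: proving that the a priori merely $W(R)$-linear $G_K$-action on $W(R)\otimes_{\varphi,\gs}\M$ descends to the subring $\hR$, i.e.\ preserves $\hR\otimes_{\varphi,\gs}\M$, and is trivial modulo $I_+\hR\,\hM$. This requires controlling $g(x)-x$ for $g\in\hat G$ and $x\in\M$ inside $T^\v\otimes_{\Z_p}W(R)$ (equivalently inside $V^\v\otimes_{\Q_p}B^+_{\cris}$), using the explicit description of $\hR$ --- its $\hat G$-stability and $\hR/I_+\hR\simeq W(k)$ --- together with the estimates $g([\p])=[\p]\,[\e(g)]$ and $[\e(g)]-1\in t\,A_{\cris}$. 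This is the technical core of \cite{liu4}; everything else above is comparatively formal.
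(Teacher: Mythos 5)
This theorem is quoted wholesale from \cite{liu4}; the paper gives no proof of its own, so the only comparison available is with the strategy of \cite{liu4} and \cite{liu2}, which your outline mirrors accurately. The pieces you actually carry out are correct: for (3), the map $f\mapsto (a\otimes x\mapsto a\,\varphi(f(x)))$ respects the tensor relations, is $\hR$-linear and $\varphi$-equivariant, is bijective because $\varphi$ is bijective on $W(R)$ ($R$ perfect), and is $G_\infty$-equivariant since $\M\subset\hM^{H_K}$; for (2), extending the evaluation pairing $W(R)$-linearly does give $\hat\iota$, whose $G_K$-equivariance is formal from \eqref{hatT} and whose injectivity follows after base change to $W(\Fr R)$ by Fontaine's \'etale $\varphi$-module theory; and the full-faithfulness argument in (1) via Kisin's full faithfulness of $T_\gs$ is the one used in \cite{liu4} (note only that it invokes the equivariant embedding of (2) for arbitrary $(\varphi,\hat G)$-modules, so the logical order must be (3), (2), then (1)).

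However, at exactly the points where the theorem has real content you give a description of what must be proved rather than a proof. Essential surjectivity --- that for $\M=\M(T)$ the tautological $G_K$-action on $W(R)\otimes_{\varphi,\gs}\M$ inside $T^\v\otimes_{\Z_p}W(R)$ preserves $\hR\otimes_{\varphi,\gs}\M$ and is trivial modulo $I_+\hR$ --- is only named; the quantitative control of $g(x)-x$ via $t$, $\varphi(\gt)$ and the explicit structure of $\R_{K_0}$ is the bulk of \cite{liu4}, as you acknowledge. Likewise, the fact that $\hat T(\hM)$ of an \emph{abstract} $(\varphi,\hat G)$-module is a lattice in a semi-stable representation (producing $N$ on $S[\frac 1 p]\otimes_{\varphi,\gs}\M$ from the $\hat G$-action, defining the filtration, checking weak admissibility) and the inclusion $(\varphi(\gt))^r(\hat T^\v(\hM)\otimes_{\Z_p}W(R))\subset\hat\iota(W(R)\otimes_{\varphi,\gs}\M)$ are indicated in the right direction (height $r$ plus $\varphi(\gt)=c_0^{-1}E([\upi])\gt$) but not established. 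So the architecture and the formal steps are right, but as a standalone proof the proposal is incomplete: its technical core is deferred to the very reference the theorem cites.
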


\subsection{A refinement of Theorem \ref{main1}} Obviously, the theory described by Theorem \ref{review} depends on the choice of uniformizer $\pi$ in $K$. Fix a $G_K$-stable $\Z_p$-lattice $T$ inside a semi-stable representation $V$, if we select another uniformizer $\pi'$ then we obtain  $\M'$ and $\hat \iota' $ in Equation \eqref{Eq: iota}. As indicated in the introduction, one main goal of this paper is to understand the relation between $\M$ and $\M'$. Let $\gs_{\upi}$ (resp. $S_{\upi}$) denote the image of embedding $ \gs \inj W(R)$ (resp. $S \inj A_\cris$) via $u \mapsto [\upi]$. Write $\upi' = \unu \upi$ with $\unu = (\nu_n)_{n \geq 0} \in R$. Note that $\nu_0$ is a unit. So $\log ([\unu]) \in B^+_\cris$.

We denote by $\gs_{\upi'}$ and $S_{\upi'}$ the subrings of $W(R)$ and $A_\cris$ respectively via $u \mapsto [\upi']$. Let $\tilde S_{\upi, \upi'}$ be the smallest ring inside $B^+_\cris$ containing $S_{\upi}[1/p]$, $S_{\upi'}[1/p]$ and $\log([\unu])$. Set $\tilde \gs_{\upi, \upi'}: = W(R) \cap \tilde  S_{\upi, \upi'}$. Similarly,  let $ S_{\upi, \upi'}$ be the smallest ring inside $B^+_\cris$ containing $S_{\upi}[1/p]$ and $S_{\upi'}[1/p]$ and  set $ \gs_{\upi, \upi'}: = W(R) \cap  S_{\upi, \upi'}$.

\begin{theorem}\label{thm: main1.5} Notations as the above, we have
$$\hat \iota ( \tilde \gs_{\upi, \upi'} \otimes _{\varphi, \gs_{\upi}} \M) = \hat \iota' (\tilde \gs_{\upi, \upi'} \otimes _{\varphi, \gs_{\upi'}} \M')$$
as submodules of $T^\v \otimes_{\Z_p} W(R)$.

If $V$ is crystalline then $\tilde \gs_{\upi, \upi'}$ in the above equation can be replaced by $\gs_{\upi, \upi'}$.
\end{theorem}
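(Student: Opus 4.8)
The plan is to push both sides into the period rings of $p$-adic Hodge theory, where the uniformizer disappears from the picture, and then to descend the resulting equality back down.

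First I would reduce to Breuil modules. Since $\gs_\upi\subset S_\upi\subset\tilde S_{\upi,\upi'}$ (and likewise for $\upi'$), one may rewrite $\tilde S_{\upi,\upi'}\otimes_{\varphi,\gs_\upi}\M=\tilde S_{\upi,\upi'}\otimes_{S_\upi}\mathcal M$, where $\mathcal M:=S_\upi\otimes_{\varphi,\gs_\upi}\M$ is the associated Breuil module. By the classification of Breuil modules (in the form proved in \cite{liu2}) there is a canonical isomorphism of filtered $(\varphi,N)$-modules $\mathcal M[1/p]\simeq D_\st(V)\otimes_{K_0}S_{K_0}$, and — this is the crucial compatibility, tying together Kisin's $T_\gs$, the functor $\hat T$ of Theorem \ref{review} and Fontaine's $D_\st$ (\cite{liu2,liu4}) — after extension of scalars the map $\hat\iota$ becomes this isomorphism followed by Fontaine's semistable comparison $D_\st(V)\otimes_{K_0}B^+_\st\hookrightarrow V^\vee\otimes_{\Q_p}B^+_\st$ and then the monodromy untwisting $\mathrm{id}\otimes\exp(-\gu_\upi N)$ that carries the image back into $V^\vee\otimes_{\Q_p}B^+_\cris=T^\vee\otimes_{\Z_p}B^+_\cris$, where $\gu_\upi=\log[\upi]$ is precisely the element adjoined to define $B^+_\st$. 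Running this description through, $\hat\iota(\tilde S_{\upi,\upi'}\otimes_{\varphi,\gs_\upi}\M)$ is a $\tilde S_{\upi,\upi'}$-submodule of $V^\vee\otimes_{\Q_p}B^+_\cris$ expressed purely in terms of $(V,D_\st(V))$ once $\gu_\upi$ is fixed. Changing $\pi$ to $\pi'$ replaces $\gu_\upi$ by $\gu_{\upi'}=\gu_\upi+\log[\unu]$, so the two descriptions differ by $\mathrm{id}\otimes\exp(\log[\unu]\cdot N)$, which is a \emph{polynomial} in $\log[\unu]$ and $N$ because $N$ is nilpotent; since $\tilde S_{\upi,\upi'}$ was defined exactly so as to contain $S_\upi[1/p]$, $S_{\upi'}[1/p]$ and $\log[\unu]$, this untwisting is defined over $\tilde S_{\upi,\upi'}$, and one obtains
$$\hat\iota(\tilde S_{\upi,\upi'}\otimes_{\varphi,\gs_\upi}\M)=\hat\iota'(\tilde S_{\upi,\upi'}\otimes_{\varphi,\gs_{\upi'}}\M')$$
as submodules of $T^\vee\otimes_{\Z_p}B^+_\cris$. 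When $V$ is crystalline $N=0$, no $\gu$ enters, $D_\st(V)=D_\cris(V)$ already lies in $V^\vee\otimes B^+_\cris$, and the whole argument runs over $S_{\upi,\upi'}$, so $\log[\unu]$ is unnecessary.

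Next I would do the integral descent, i.e.\ intersect the displayed equality with $T^\vee\otimes_{\Z_p}W(R)$ and recognise that this cuts it down exactly to the $\tilde\gs_{\upi,\upi'}$-spans. By Theorem \ref{review}(2), $\hat\iota$ is injective with cokernel killed by $(\varphi(\gt))^r$; consequently $\hat\iota$ stays injective after the base change to $B^+_\cris$, and — using that $\gt\bmod p\neq 0$ and $R$ is a domain (a short computation with the structure of $A_\cris/p$ gives $(\varphi(\gt))^rB^+_\cris\cap W(R)=(\varphi(\gt))^rW(R)$), together with the sandwiching $(\varphi(\gt))^r(T^\vee\otimes W(R))\subset\hat\iota(W(R)\otimes_{\varphi,\gs_\upi}\M)$ — one gets the saturation statement $\hat\iota(B^+_\cris\otimes_{\varphi,\gs_\upi}\M)\cap(T^\vee\otimes_{\Z_p}W(R))=\hat\iota(W(R)\otimes_{\varphi,\gs_\upi}\M)$. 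Combining this with the injectivity of $\hat\iota$ and the trivial identity $(\tilde S_{\upi,\upi'}\otimes_{\varphi,\gs_\upi}\M)\cap(W(R)\otimes_{\varphi,\gs_\upi}\M)=\tilde\gs_{\upi,\upi'}\otimes_{\varphi,\gs_\upi}\M$ (valid since $\M$ is free over $\gs_\upi$ and $\tilde\gs_{\upi,\upi'}=W(R)\cap\tilde S_{\upi,\upi'}$), one checks that $\hat\iota(\tilde S_{\upi,\upi'}\otimes_{\varphi,\gs_\upi}\M)\cap(T^\vee\otimes_{\Z_p}W(R))=\hat\iota(\tilde\gs_{\upi,\upi'}\otimes_{\varphi,\gs_\upi}\M)$, and symmetrically for $\upi'$. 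Intersecting the equality of the first step with $T^\vee\otimes_{\Z_p}W(R)$ then yields the theorem; compatibility with $\varphi$ and $G_K$ is automatic, $\hat\iota$ and $\hat\iota'$ being so. The crystalline refinement is identical after replacing $\tilde S_{\upi,\upi'},\tilde\gs_{\upi,\upi'}$ by $S_{\upi,\upi'},\gs_{\upi,\upi'}$ everywhere.

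The part I expect to be routine is the integral descent of the second step. The real work is the first step: one must pin down the exact shape of $\hat\iota$ over the period rings — checking that Kisin's comparison, Breuil's classification of $\mathcal M[1/p]$, Fontaine's $B_\st$-comparison and the monodromy untwisting $\exp(-\gu N)$ all assemble into a single commutative diagram — and then track carefully how the choice of $\gu$, equivalently of $\pi$, enters, so as to see that the only discrepancy between the $\pi$- and $\pi'$-pictures is the operator $\exp(\log[\unu]\cdot N)$, defined over $\tilde S_{\upi,\upi'}$. Getting this bookkeeping exactly right, rather than any single estimate, is the main obstacle.
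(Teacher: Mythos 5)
Your proposal is correct and takes essentially the same route as the paper: the rational step is exactly Lemma \ref{lem:3.1.1} and its corollary (the two embeddings differ by the finite twist $\sum_n\gamma_n(-\log([\unu]))\bar N^n=\exp(-\log([\unu])\bar N)$, whose entries lie in $\tilde S_{\upi,\upi'}$, resp.\ in $S_{\upi,\upi'}$ when $N=0$), and your integral "saturation" step is a basis-free restatement of the paper's argument that the change-of-basis matrix $X$ lies in $W(R)$, using the same two ingredients, namely Theorem \ref{review}(2) and the divisibility Lemma \ref{lem: key} for the generator $\varphi(\gt)$ of $I^{[1]}W(R)$ (which you assert as a short computation but which is precisely the cited Lemma 3.2.2 of \cite{liu4}).
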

\begin{remark}\begin{enumerate}\item Let $\nu= \pi/\pi'$. If $\nu  \in W(k)^\times $ then we can arrange $\pi'_n$ so that $[\upi]= [\bar\nu][\upi']$ with $\bar \nu = \nu\mod p \in k^\times$.  Hence $\gs_{\upi, \upi'}= \gs_{\upi} = \gs_{\upi'}$.

\item If $\nu  \not \in W(k)^\times $ then the situation could be  more complicated. So far we do not have a good description for $\tilde \gs _{\upi, \upi'}$,  even for $\gs _{\upi, \upi'}$ . We warn the readers that $\gs_{\upi, \upi'}$ may be larger than the smallest ring containing $\gs_\upi$ and $\gs_{\upi'}$. For example, let $\tilde E (u)$ be the  Eisenstein polynomial of $\pi'$. Then $\tilde E([\upi'])/  E([\upi])$ is a unit in $W(R)$,  because $\Fil^1 W(R)$ is a principal ideal and $E([\upi])$ and $\tilde E([\upi'])$ are generators of $\Fil^1 W(R)$. Hence $ x= \varphi ( \tilde E ([\upi'])/  E([\upi]))= \tilde E([\upi']^p)/  E([\upi]^p) \in W(R)$. But $\frac{ E([\upi]^p)}{p}$ is a unit in $S_{\upi}$. Therefore, $x \in \gs_{\upi, \upi'}$. In general, $x$ is not in the smallest ring containing $\gs_{\upi}$ and $\gs_{\upi'}$. See the following example.

\end{enumerate}
\end{remark}
\begin{example}\label{ex:1} Let $K= \Q_p(\zeta_p)$. Let $\pi= \zeta_p -1$ and $\pi' = \zeta_p \pi$. We can choose $\upi$ and $\upi'$ such that $\upi' =\upi \e'  $ with ${\e'}^p = \e$. Then the smallest ring $\tilde \gs$ containing $\gs_\upi$ and $\gs_\upi'$ is inside $W(k)[\![ [\upi], [\e']-1]\!]$. If $x $ is in $\tilde \gs$ then  $\varphi (x)$ is in $W(k)[\![ [\upi], [\e]-1]\!] \subset \R_{K_0}\cap W(R) = \hR$. On the other hand,  since $\frac{\varphi^2( E([\upi]))}{p}$ is a unit in $S_\upi$,  we can write $\varphi (x)$ as a series in $K_0[\![[\pi], [\e]-1]\!]$. It is easy to see that this series is not in $W(k)[\![ [\upi], [\e]-1]\!]$. But by Lemma 7.1.2, \cite{liu2}, for any $y \in \R_{K_0}$, there is only one way to expand $y$ in a series in $K_0[\![[\upi], [\e]-1]\!]$. So $\varphi (x)$ is not in $W(k)[\![ [\upi], [\e]-1]\!]$. Contradiction and $\varphi(x) \not \in \tilde \gs$.

\end{example}
\begin{Notation}\label{convention} 
We will reserve $\varphi$ and $N$ to denote Frobenius action, monodromy action on many different rings and modules. To distinguish them,
we sometime add subscripts to indicate over which those structures are defined. For example, $\varphi_\M$ is the Frobenius defined on $\M$. We always drop these subscripts if no confusions arise. As we have indicated as before, Kisin's theory (and its related theory, like the theories of Breuil modules and $(\varphi, \hat G)$-modules, which will be used below) depends on the choice of the unformizer $\pi$, or more precisely, depends on the choice of $\pi_n$ and hence the embedding $\gs \inj W(R)$ via $u \mapsto [\upi]$. We add subscripts $\upi$ to subrings in $W(R)$ to denote subrings  (like $\gs, S$) whose embeddings to $B^+_\dR$ depends on the embedding  $\gs \inj W(R)$ via $u \mapsto [\upi]$. But we always drop subscripts when we just discuss the general theory where  the embedding $\gs \inj W(R)$ via $u \mapsto [\upi]$ is always fixed.
 Finally, $\gamma_i(x)$, $\t{M}_{d \times d}(A)$ and  $\t{Id}$ denote the standard divided power $\frac{x^i}{i!}$,
 the ring of $d \times d$-matrices with coefficients in ring $A$ and the identity map respectively; $V^\v$ denotes the dual of a representation $V$.
\end{Notation}
\subsection{Some facts on the theory of Breuil modules} We will use extensively the theory of {Breuil modules}, which we review in this subsection.  Following \cite{b2}, a \emph{filtered $\varphi$-module over
$S[\frac 1 p ]$} is a finite free $S[\frac 1 p]$-module $\D$ with
 \begin{enumerate}
  \item a $\varphi_{S}$-semi-linear morphism $\varphi_\D: \D \to
        \D$ such that the determinant of $\varphi_\D$ is invertible in
        $S[\frac 1 p ]$,
  \item a decreasing filtration over $\D$ of $S_{K_0}$-modules
        $\{\Fil^i (\D)\}_{i \in \Z}$ with $\Fil^0 (\D)= \D $  and
        $\Fil^iS_{K_0}\cdot\Fil^j(\D)\subset \Fil^{i+j}(\D). $
 \end{enumerate}
Similarly, we define \emph{filtered $\varphi$-modules over $S$} by changing $S[\frac 1 p]$ to $S$ everywhere in the above definition, but we still require that the determinant of $\varphi$ is in $S[\frac 1 p]$.

  A \emph{Breuil module} is a filtered $\varphi$-module $\D$ over $S[\frac 1 p ]$
with following extra monodromy structure: a $K_0$-linear map (monodromy)  $N_\D: \D \to \D$ such
that
\begin{enumerate}
\item for all $f\in S_{K_0}$ and $m \in \D$, $N_\D (fm)= N_S(f)m + fN_\D(m)$,
\item $N_\D\varphi = p \varphi N_\D$,
\item $N_\D(\Fil^i\D)\subset \Fil^{i-1}\D.$
\end{enumerate}

A filtered $(\varphi, N)$-module $D$ is called \emph{positive} if $\Fil ^0 D_K = D_K$.
It turns out that the category of positive filtered $(\varphi, N)$-modules and the category of Breuil modules are equivalent. More precisely, for any positive filtered $(\varphi, N)$-module $(D, \varphi, N , \Fil^i D_K)$,
we can associate a Breuil module $\D$ by defining $\D = S
\otimes_{W(k)}D $;
 $\varphi_\D : = \varphi_S \otimes \varphi_D$;  $N_\D := N_S \otimes\t{Id} + \t{Id}\otimes N_D;  $
Define $\Fil^0 \D:=\D$ and by induction
$$\Fil^{i+1}\D:= \{ x\in \D \mid N(x) \in \Fil^i \D \t{ and } f_\pi (x) \in \Fil^{i+1}D_K\}, $$
where $f_\pi : \D \twoheadrightarrow D_K$ is defined by $s(u)\otimes
x \mapsto s(\pi)x$.

In \S 6 of \cite{b2}, Breuil proved the above
functor $\D: D \to S \otimes_{W(k)}D$ is an equivalence of
categories. Furthermore, $D$ and $\D(D)$ give rise to the same Galois
representations (Proposition 4.1.1.2 in \cite{b3}), namely, there is a natural isomorphism $$\Hom_{ W(k), \varphi, N, \Fil^i } (D, B^+_\st)\simeq \Hom_{S, \varphi, N, \Fil ^i } (\D(D), \widehat B^+_\st) $$ as $\Q_p[G_K]$-modules. Here $\widehat B^+_\st$ is the period ring defined in \cite{b0}.
\begin{remark}\label{remark set N}
In the theory of Kisin and Breuil modules, we use implicitly or explicitly the above isomorphism to connect   Galois representations associated  to  filtered $(\varphi, N)$-modules with those of  Breuil modules or Kisin modules. To make the above isomorphism, one set the monodromy $N$ on $B^+_\st$ via $N(\gu) =1$ (see \S3.1.1 in \cite{b3}). So strictly speaking, the monodromy structure on $B^+_\st$ may depend on the choice of uniformizer  $\pi$. On the other hand, pick another uniformizer  $\pi'$ of $K$. We have $\pi= \nu\pi' $ with $\nu$ a unit in $\O_K$. Hence $\gu = \gu ' + \beta$ with  $\gu'= \log([\upi'])$ and $\beta$ in $B^+_\cris$. So $N (\gu') =1$ if and only $N(\gu)=1$. This shows that the monodromy structure on $B^+_\st$ is unique when we declare $N(\gu)=1$ and it does not depend on the choice of \emph{uniformizers} in $\O_K$.

\end{remark}

One can naturally extend Frobenius from $\D$ to $A_\cris \otimes _S \D$ via $\varphi := \varphi_{A_\cris} \otimes\varphi_ \D$.
 We define a semi-linear $G_K$-action on $A_\cris \otimes_S \D$ via
\begin{equation}\label{Eq: G_K-action}
    \sigma(a \otimes x  )= \sum_{i=0}^\infty \sigma(a)
    \gamma_i (-\log([\e(\sigma)])) \otimes N^i (x)
\end{equation}
for $\sigma \in G_K $,  $x\in \D$ and $a \in \acris$. This $G_K$-action commutes with $\varphi$ on $\acris \otimes_S \D$ (see Lemma 5.1.1 in \cite{liu3}).

Given a Kisin module $\M$, one can define a filtered $\varphi$-module $\CM_\gs (\M)$ over $S$ in the following: Set $\CM:= \CM_\gs (\M) = S \otimes_{\varphi, \gs} \M$ and extend Frobenius $\varphi_\M$ to $ \CM$ by $\varphi_\CM := \varphi_{S} \otimes  \varphi_\M$;  Define a filtration on $\CM$ via
\begin{equation}\label{Eq: filtration fron Kisin module}
\Fil ^i \CM:= \{x  \in \CM| 1 \otimes \varphi_\M (x) \in \Fil ^i S \otimes_\gs \M \},
\end{equation}
where $1 \otimes \varphi_\M : \CM= S \otimes_{\varphi, \gs}\M \to S \otimes_\gs \M $ is an $S$-linear map.

Now let $V$ be a semi-stable representation of $G_K$ with Hodge-Tate weights in $\{0, \dots , r \}$, $T \subset V$ a $G_K$-stable $\Z_p$-lattice inside $V$, $D= D_\st (V)$ the filtered $(\varphi, N)$-module attached to $V$ and $(\M, \varphi, \hat G )$ the $(\varphi, \hat G)$-module attached to $T$ via Theorem \ref{review}. Let $\D = \D (D)$ be the Breuil module and $\CM := \CM_\gs (\M)$. The following theorem summarize the relations between Breuil modules, filtered $(\varphi, N)$-modules and $(\varphi, \hat G)$-modules (Kisin modules):

\begin{theorem}\label{thm: big-review} Notations as the above. Then the following statements hold:
\begin{enumerate}
\item There exists a natural isomorphism $\alpha: \Q_p \otimes_{\Z_p} \CM_\gs (\M) \simeq \D$ as filtered $\varphi$-modules over $S[\frac 1 p ]$.

\item There exists a natural injection
 \begin{equation}\label{Eq: Qp comparison}
 \begin{split}
 \xymatrix{ \iota : \acris \otimes_S \D \ar[r] &     V^\v \otimes_{\Z_p} {\acris} }
 \end{split}
  \end{equation}
  which is compatible with Frobenius $\varphi$ and $G_K$-actions on the both sides.

\item The isomorphism $\alpha$ induces the following commutative diagram
 \begin{equation}\label{Dg: 1}
 \begin{split}
 \xymatrix{  \acris \otimes_S \D \ar[r]^-{\iota} &     V^\v \otimes_{\Z_p} {\acris}  \\ W(R) \otimes _{\varphi, \gs}\M \ar[r]^-{\hat \iota} \ar@{^(->}[u]& T^\v \otimes_{\Z_p} W(R) \ar@{^(->}[u]}
 \end{split}
  \end{equation}
  where the top map is Equation \eqref{Eq: Qp comparison} and the bottom map is Equation \eqref{Eq: iota}. The left vertical arrow is induced by $\alpha$ restricted to $\gs \otimes_{\varphi, \gs}\M$ and the right arrow is induced by the injection $T^\v \inj V^\v.$
\end{enumerate}
\end{theorem}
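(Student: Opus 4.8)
The plan is to assemble the three assertions from established comparison results; the genuine work is aligning normalisations — especially the \emph{contravariant} convention for $D_\st$ and the monodromy convention of Remark~\ref{remark set N} — and tracking the period rings $B^+_\cris$, $B^+_\st$, $\widehat B^+_\st$ and $\acris$. For (1) I would invoke Kisin's comparison between $\M=\M(T)$ and $D=D_\st(V)$: after base change along $\gs$ to the ring $\mathcal{O}$ of rigid-analytic functions on the open unit disc used in \cite{kisin2}, Kisin produces a $\varphi$-equivariant isomorphism relating $\M$ to $D$, and restricting to the subring $S$ and inverting $p$ yields a canonical $\varphi$-equivariant isomorphism $\alpha\colon \Q_p\otimes_{\Z_p}\CM_\gs(\M)\simeq\D$. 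It then remains to check that $\alpha$ carries the filtration \eqref{Eq: filtration fron Kisin module} to Breuil's filtration on $\D=\D(D)$, i.e.\ that ``$1\otimes\varphi_\M(x)\in\Fil^i S\otimes_\gs\M$'' matches ``$N(x)\in\Fil^{i-1}\D$ and $f_\pi(x)\in\Fil^i D_K$''; this is exactly the computation carried out in \cite{liu2} (implicit already in \cite{kisin2}). So (1) is a citation once the variance and $\Fil^0 D_K=D_K$ are in place.

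For (2) I would begin from Fontaine's comparison isomorphism $B_\st\otimes_{\Q_p}V^\v\simeq B_\st\otimes_{K_0}D$, which is $\varphi$-, $N$-, filtration- and $G_K$-equivariant. Since $\D=S\otimes_{W(k)}D$ is an $S[1/p]$-module, $\acris\otimes_S\D$ is a $B^+_\cris$-module and equals $B^+_\cris\otimes_{K_0}D$; composing with the comparison produces a $\varphi$-equivariant map $\acris\otimes_S\D\to B_\st\otimes_{\Q_p}V^\v$. Two points require proof: (a) because the Hodge--Tate weights of $V$ are $\geq 0$, the image lands in $B^+_\cris\otimes_{\Q_p}V^\v=V^\v\otimes_{\Z_p}\acris$; and (b) the resulting map intertwines the twisted $G_K$-action \eqref{Eq: G_K-action} on the source with the diagonal action on the target — the twist by $\sum_i\gamma_i(-\log([\e(\sigma)]))\otimes N^i$ is precisely what repairs the failure of $\D$ (honestly only $G_\infty$-equivariant) to be $G_K$-equivariant, and this is Lemma 5.1.1 of \cite{liu3} together with the period computation there. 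Injectivity of $\iota$ then follows since it becomes an isomorphism after inverting $t$ (the comparison isomorphism does) while $\acris\otimes_S\D$ is $t$-torsion free.

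For (3), the square \eqref{Dg: 1} commutes once one recognises that $\hat\iota$ (Theorem~\ref{review}(2)) and $\iota$ (\eqref{Eq: Qp comparison}) are both evaluation pairings: $\hat\iota$ pairs $W(R)\otimes_{\varphi,\gs}\M$ against $\hat T(\hM)=\Hom_{\hR,\varphi}(\hM,W(R))$, landing in $\hat T(\hM)^\v\otimes W(R)=T^\v\otimes W(R)$, while $\iota$ pairs $\acris\otimes_S\D$ against $\Hom_{S,\varphi,N,\Fil}(\D,\widehat B^+_\st)$, landing in $V^\v\otimes_{\Z_p}\acris$. The square then reduces to the statement that $\alpha$ identifies these two realisation functors compatibly with the $\varphi$- and $G_K$-actions: $\hat T(\hM)\simeq T_\gs(\M)$ by Theorem~\ref{review}(3) and $\Q_p\otimes_{\Z_p}\hat T(\hM)\simeq V$, while the Breuil-module comparison (Proposition 4.1.1.2 of \cite{b3}) matches the Galois representation of $\D=\D(D)$ with that of $D$, hence with $V$ again; that these two routes give the same isomorphism is the compatibility between Kisin's functor $T_\gs$ and Breuil's functor established in \cite{liu2}, \cite{liu3}, \cite{liu4}. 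Granting it, the left vertical arrow of \eqref{Dg: 1} is by construction $\t{Id}\otimes\alpha$ restricted from $\acris\otimes_S\CM_\gs(\M)[1/p]$ and the right one is $T^\v\hookrightarrow V^\v$ tensored with $W(R)\hookrightarrow\acris$, so commutativity is immediate.

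The step I expect to be the main obstacle is the bookkeeping underlying (2) and (3): reconciling $B^+_\st$, $\widehat B^+_\st$ and $\acris$ together with the powers of $t$, and above all verifying that the $G_K$-action on $W(R)\otimes_{\varphi,\gs}\M$ coming from the $(\varphi,\hat G)$-module structure — defined only over $\hR\subset W(R)$ and via the relation $\M\subset\hM^{H_K}$ — agrees, under the embedding into $\acris\otimes_S\D$, with the action \eqref{Eq: G_K-action} built from $N_\D$ and $\log([\e(\sigma)])$. This forces one to know how $N_\D$ acts on the image of $\M$, which is governed by Kisin's connection on $\mathcal{O}\otimes_\gs\M$, and it is the point where the construction of the $\hat G$-action in \cite{liu4} is really pinned down; I would write this compatibility out in full. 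Everything else is translation between established equivalences, taking care only over the contravariant normalisation of $D_\st$ and the monodromy convention fixed in Remark~\ref{remark set N}.
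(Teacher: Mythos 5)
Your proposal is correct and takes essentially the same route as the paper: this is a review theorem whose proof in the paper consists of citations to the same prior results you assemble — (1) the Kisin–Breuil compatibility (\S 3.4 of \cite{liu3}), (2) \S 5.2 of \cite{liu3}, whose stated key point, that $\Hom_{\acris, \varphi, \Fil^i}(\acris \otimes_S \D, B^+_\cris)$ is canonically $V$ as a $\Q_p[G_K]$-module, is just the dual formulation of your direct construction of $\iota$ from the comparison isomorphism together with the twisted action of Equation \eqref{Eq: G_K-action}, and (3) the construction of the $(\varphi, \hat G)$-module action (Theorem 5.4.2 of \cite{liu2} and Proposition 3.1.3 of \cite{liu4}), which is precisely the compatibility of $T_\gs$ with Breuil's functor that you single out as the crux.
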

\begin{proof} (1) follows the compatibility between Kisin modules and Breuil modules. See \S3.4 in \cite{liu3}. (2) is proved in \S 5.2 in \cite{liu3}. The key point is that $\Hom_{\acris, \varphi, \Fil^i} (\acris \otimes_S \D , B^+_\cris)$ is canonically isomorphic to $V$ as $\Q_p [G_K]$-modules. The proof of (3) relies on the construction of $(\varphi, \hat G)$-modules. See Theorem 5.4.2 in \cite{liu2} and Proposition 3.1.3 in \cite{liu4}.
\end{proof}
\section{The proof of the main theorems}
We will prove Theorem \ref{thm: main1.5}, Theorem \ref{thm: main1.6} and Theorem \ref{Thm: Wach&Kisin} in this section. Our strategy is almost the same as that in \S3.2 in \cite{liu4}.
\subsection{The proof of Theorem \ref{thm: main1.5}}\label{subsec: 3.1}


To prove Theorem \ref{thm: main1.5}, we first show that the injection $\iota$ in Equation \eqref{Eq: Qp comparison} does not depend on the choices of uniformizer. More precisely, let $\D'$ denote the Breuil module attached to $V$  and $\iota'$  the injection in Equation \eqref{Eq: Qp comparison} for the choice of uniformizer $\pi'$. We claim:

\begin{lemma}\label{lem:3.1.1} There exists an $\acris$-linear isomorphism $$\beta : \acris \otimes_{S_\upi} \D \to \acris  \otimes_{S_{\upi'}} \D'$$ which is compatible with $G_K$-actions and  Frobenius  such that the following diagram commutes
 \begin{equation}\label{Dg: 2}
 \begin{split}
 \xymatrix{  \acris \otimes_{S_\upi } \D \ar[r]^{\iota} &     V^\v \otimes_{\Z_p} {\acris}  \\ \acris \otimes _{S_{\upi'}}\D'  \ar[r]^{\iota'}  \ar[u]^\beta_\wr & V^\v \otimes_{\Z_p} \acris \ar@{=}[u]}
 \end{split}
  \end{equation}

\end{lemma}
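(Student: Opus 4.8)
The plan is to write $\beta$ down by hand, as the exponential twist by $\lambda := \log([\unu]) \in B^+_\cris$, and then to check all the required properties by direct computation.

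First I would use that the two Breuil modules arise from the \emph{same} filtered $(\varphi, N)$-module $D = D_\st(V)$: indeed $\D = S_\upi \otimes_{W(k)} D$ and $\D' = S_{\upi'} \otimes_{W(k)} D$. Base change along $S_\upi \inj \acris$ and $S_{\upi'} \inj \acris$ then gives canonical $\acris$-linear identifications
\begin{equation*}
\acris \otimes_{S_\upi} \D \;=\; \acris \otimes_{W(k)} D \;=\; \acris \otimes_{S_{\upi'}} \D',
\end{equation*}
all three being $B^+_\cris \otimes_{K_0} D$ since $\D$ and $\D'$ are already $S[1/p]$-modules. Under these identifications the Frobenius on each side is $\varphi_{\acris} \otimes \varphi_D$, so the two Frobenii agree; only the $G_K$-actions differ (and the two Breuil filtrations, which the lemma does not mention). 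Computing from \eqref{Eq: G_K-action} and the fact that $N_\D(1 \otimes d) = 1 \otimes N_D(d)$, the $G_K$-action induced by $\upi$ reads
\begin{equation*}
\sigma(a \otimes d) \;=\; \sum_{i \ge 0} \sigma(a)\, \gamma_i\big(-\log([\e(\sigma)])\big) \otimes N_D^i(d), \qquad \sigma \in G_K,
\end{equation*}
and the one induced by $\upi'$ is the same with $\e'$ in place of $\e$. Writing $\gu = \log([\upi])$, $\gu' = \log([\upi'])$, the relations I would record are: $[\upi'] = [\unu][\upi]$, hence $\gu' = \gu + \lambda$; $\varphi(\lambda) = \log([\unu^p]) = p\lambda$ and $N(\lambda) = 0$; and, since $\e'(\sigma)/\e(\sigma) = \sigma(\unu)/\unu$, the cocycle identity
\begin{equation*}
\log([\e'(\sigma)]) - \log([\e(\sigma)]) \;=\; \sigma(\lambda) - \lambda \qquad (\sigma \in G_K),
\end{equation*}
all the logarithms on the left converging in $\Fil^1 \acris$ because $\sigma$ fixes $\pi, \pi' \in K$.

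Next I would set $\beta(a \otimes d) := \sum_{i \ge 0} a\, \gamma_i(\lambda) \otimes N_D^i(d)$, i.e.\ $\beta = \exp\big(\lambda\, (\t{Id} \otimes N_D)\big)$; this is a $B^+_\cris$-linear (hence $\acris$-linear) automorphism of $\acris \otimes_{W(k)} D$, with inverse $\exp(-\lambda N_D)$, the sums being finite since $N_D$ is nilpotent. It commutes with Frobenius: this follows from $\varphi(\lambda) = p\lambda$ and $N_D \varphi_D = p \varphi_D N_D$, the two factors of $p$ cancelling. And it carries the $\upi$-action to the $\upi'$-action: since $\lambda N_D$, $\log([\e(\sigma)]) N_D$ and $N_D$ commute pairwise over $B^+_\cris$, multiplying out the exponentials reduces the identity $\beta(\sigma \cdot x) = \sigma \cdot \beta(x)$ to exactly the cocycle identity displayed above. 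This produces the desired $\acris$-linear, $\varphi$- and $G_K$-equivariant isomorphism $\beta \colon \acris \otimes_{S_\upi} \D \to \acris \otimes_{S_{\upi'}} \D'$.

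For the commutativity of the square I would use the explicit shape of the comparison map. By the construction in \cite[\S5.2]{liu3} (which builds on \cite{b3}), under the identification above $\iota$ factors as the twisted embedding
\begin{equation*}
\acris \otimes_{W(k)} D \;\inj\; B^+_\st \otimes_{K_0} D, \qquad a \otimes d \;\longmapsto\; a \cdot \exp(-\gu\, N_D)(1 \otimes d)
\end{equation*}
(which is precisely the datum encoded by \eqref{Eq: G_K-action}, and is $\varphi$- and $G_K$-equivariant by the same kind of computation as above), followed by the canonical semi-stable comparison isomorphism $\alpha_\st \colon B^+_\st \otimes_{K_0} D \xrightarrow{\ \sim\ } B^+_\st \otimes_{\Q_p} V^\v$, which depends only on $V$; that this composite lands in $V^\v \otimes \acris$ is part of Theorem \ref{thm: big-review}(2). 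The same formula with $\gu'$ in place of $\gu$, and the same $\alpha_\st$, computes $\iota'$. Hence, using $\gu' = \gu + \lambda$ and the commutativity of all the operators in sight,
\begin{equation*}
\iota'\big(\beta(a \otimes d)\big) = a\, \alpha_\st\big( \exp(-\gu' N_D) \exp(\lambda N_D)(1 \otimes d)\big) = a\, \alpha_\st\big( \exp(-\gu\, N_D)(1 \otimes d)\big) = \iota(a \otimes d),
\end{equation*}
which is the asserted commutativity (equivalently, $\beta^{-1}$ makes the square as drawn commute). I expect this last step — pinning down precisely how the uniformiser enters $\iota$ through $\gu = \log([\upi])$ — to be the only non-formal point; everything else is exponential bookkeeping governed by the single relation $\gu' = \gu + \lambda$ together with $\varphi(\lambda) = p\lambda$ and the cocycle identity. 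An alternative avoiding the explicit form of $\iota$ would be to check in addition that $\beta$ matches the two Breuil filtrations (by unwinding the recipe for $\Fil^\bullet \D$ in terms of $f_\pi$ versus $f_{\pi'}$) and then to invoke the canonical identification $V \simeq \Hom_{\acris, \varphi, \Fil^\bullet}(\acris \otimes_S \D, B^+_\cris)$ of Theorem \ref{thm: big-review}(2), whence $\iota = \iota' \circ \beta$ formally.
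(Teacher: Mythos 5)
Your proposal is correct and is essentially the paper's own argument: both rest on the identity $\gu' = \gu + \log([\unu])$ with $\log([\unu])\in B^+_\cris$, so that under the identifications $\D = S\otimes_{W(k)}D_\st(V)$, $\D' = S_{\upi'}\otimes_{W(k)}D_\st(V)$ the two comparison embeddings differ by the finite exponential $\sum_{n\ge 0}\gamma_n\bigl(-\log([\unu])\bigr)\bar N^n$, which is exactly the paper's transition matrix in Equation \eqref{Eq: difference of basis} and your $\beta=\exp(\lambda N_D)$. The only cosmetic difference is that you verify the $\varphi$- and $G_K$-equivariance of $\beta$ by direct computation (via $\varphi(\lambda)=p\lambda$ and the cocycle identity), whereas the paper obtains them implicitly from the fact that $\beta$ is defined compatibly with the already equivariant injections $\iota,\iota'$ coming from the unique $(\varphi,N)$-section $s$ and Diagram \eqref{i}.
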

\begin{proof} Let $I_+ S = S \cap u K_0 [\![u]\!]$ and $D:= \D/ I_+S \D$. Then $D$ is a finite dimensional $K_0$-vector space with Frobenius $\varphi$ and monodromy $N$ on $D$ induced from that on $\D$. Proposition 6.2.1.1 in \cite{b0} showed that
there exists a unique $(\varphi, N)$-equivariant section $s : D \inj \D$ and $\D = S \otimes_{W(k)} s(D) $ as $S$-modules. By Proposition 2.2.2 in \cite{liu-wd}, $s(D)\subset V^\v \otimes_{\Z_p}\acris \subset V^\v \otimes_{\Q_p} B^+_\st$ has the following relation with $D_\st (V) = (V^\v \otimes _{\Q_p} B_\st ^+)^{G_K}$: There exists a (necessarily unique) isomorphism $i: D_\st (V) \to D$ compatible with $\varphi$ and $N$ such that the following diagram commutes
\begin{equation}\label{i}
\begin{split}\xymatrix{D_\st (V) \ar[d]^-i_\wr  \ar@{^{(}->}[r] & V^\v \otimes _{\Q_p} B^+_\st\ar[d]^{\mod \gu} \\
s(D) \ar@{^{(}->}[r]^-{\iota} & V^\v \otimes _{\Q_p} B^+_\cris}
\end{split}
\end{equation}
where $\gu= \log ([\upi]) \in B^+_\st$,  and the inverse of $i$ is given by  $ y \mapsto \sum \limits_{n=0}^\infty N^n(y) \otimes \gamma _n  (\gu)$.
If we fix a $K_0$-basis $\tilde e_1 , \dots , \tilde e_d$ of $D_\st (V)$, then by the above diagram, we obtain a basis $e_1 , \dots , e_d$ of $s(D)$ by modulo $\gu$ to $\tilde e_1 , \dots , \tilde e_d$, and $$(e_1, \dots, e_d)= (\tilde e_1 , \dots, \tilde e_d)\sum \limits^\infty_{n= 0} \gamma_n (-\gu) (\bar N) ^n ,  $$ where $\bar N \in \t{M}_{d \times d}(K_0)$ is the matrix such that $N(\tilde e_1, \dots, \tilde e_d) = (\tilde e_1, \dots , \tilde e _d) \bar N$.

Now by changing to another uniformizer $\pi'$, we get  $s'(D')$ injects to $V^\v \otimes_{\Q_p} B^+_\cris$. Modulo $\tilde e_1 , \dots, \tilde e_d$ by $\gu' = \log([\upi'])$, we get the basis $e'_1 , \dots, e'_d$ of $s'(D')$ and
$$(e'_1, \dots, e'_d)= (\tilde e_1 , \dots, \tilde e_d)\sum \limits^\infty_{n= 0} \gamma_n (-\gu') (\bar N) ^n. $$
Write $\upi = \underline \nu \upi'$ with $\underline \nu = (\nu_n)_{n\geq 0 } \in R$. Since $\nu_0 $ is a unit in $\O_K$, $\log([\underline \nu])$ is in $B^+_\cris$. Now we get
\begin{equation}\label{Eq: difference of basis}
(e_1 ,\dots,  e_d)= (e'_1 , \dots, e'_d) \sum_{n=0}^\infty \gamma_n(-\log([\underline \nu])) (\bar N)^n
 \end{equation}
 We remark the sum in the right side of the above equation is indeed a finite sum because $\bar N ^n= 0$ if $n $ is large enough. Now the lemma follows the facts that $s(D) \otimes _{W(k)} S \simeq \D$ as $S$-modules and that the matrix $\sum \limits _{n=0}^\infty \gamma_n(-\log([\underline \nu])) (\bar N)^n   $ has coefficients in $B^+_\cris$.

\end{proof}

\begin{co} Let $\hat e_1 , \dots, \hat e_d $ be an $S_\upi[\frac 1 p ]$-basis of $\D$  and $\hat e'_1 , \dots, \hat e'_d $ an $S_{\upi'}[\frac 1 p ]$-basis of $\D'$. Then $(\hat e'_1 ,\dots  \hat e'_d ) = (\hat e_1 ,\dots,  \hat e_d ) X$ with an invertible matrix $X$ whose entries are in  $\tilde S_{\upi, \upi'}$. If $V$ is crystalline then $X$ has entries in $S_{\upi, \upi'}$.
\end{co}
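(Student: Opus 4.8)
The plan is to deduce the corollary directly from Lemma \ref{lem:3.1.1}, mostly by bookkeeping with bases. First I would fix the reference basis $\tilde e_1,\dots,\tilde e_d$ of $D_\st(V)$ used in the proof of the lemma, and recall the two induced bases: $e_1,\dots,e_d$ of $s(D)$ (obtained by reducing $\tilde e_i$ mod $\gu$) and $e'_1,\dots,e'_d$ of $s'(D')$ (reducing mod $\gu'$), related by Equation \eqref{Eq: difference of basis}, i.e. $(e_1,\dots,e_d)=(e'_1,\dots,e'_d)\,Y$ with $Y=\sum_{n\ge 0}\gamma_n(-\log([\unu]))(\bar N)^n$, a finite sum with entries in $B^+_\cris$. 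Since $\D = S_\upi\otimes_{W(k)}s(D)$ and $\D'=S_{\upi'}\otimes_{W(k)}s'(D')$, the $e_i$ form an $S_\upi$-basis of $\D$ and the $e'_i$ an $S_{\upi'}$-basis of $\D'$, so after base change to $S_\upi[1/p]$, resp. $S_{\upi'}[1/p]$, they are bases in the sense of the corollary. The change-of-basis matrices from the $\hat e_i$ to the $e_i$ (entries in $S_\upi[1/p]\subset\tilde S_{\upi,\upi'}$) and from the $e'_i$ to the $\hat e'_i$ (entries in $S_{\upi'}[1/p]\subset\tilde S_{\upi,\upi'}$) are invertible over those subrings; composing them with $Y$ gives the desired $X$.

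The only real content is the claim that the entries of $Y$ lie in $\tilde S_{\upi,\upi'}$ (and in $S_{\upi,\upi'}$ in the crystalline case). For this I would observe that the entries of $\bar N$ lie in $K_0\subset S_\upi[1/p]$, and $\log([\unu])$ lies in $\tilde S_{\upi,\upi'}$ by its very definition as the smallest ring inside $B^+_\cris$ containing $S_\upi[1/p]$, $S_{\upi'}[1/p]$ and $\log([\unu])$; since $Y$ is a finite polynomial expression (the $\gamma_n$ introduce only rational coefficients, harmless after inverting $p$) in these elements, it has entries in $\tilde S_{\upi,\upi'}$, which is closed under the ring operations. In the crystalline case $N=0$ on $D_\st(V)$, so $\bar N=0$ and $Y=\mathrm{Id}$; then $X$ is just the composite of the two change-of-basis matrices over $S_\upi[1/p]$ and $S_{\upi'}[1/p]$, whose entries lie in $S_{\upi,\upi'}$ by definition of that ring. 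Invertibility of $X$ is automatic: both the $\hat e_i$ and the $e_i$ (resp. $\hat e'_i$ and $e'_i$) are bases of the relevant free modules, and $Y$ is invertible over $B^+_\cris$ with inverse $\sum_{n\ge0}\gamma_n(\log([\unu]))(\bar N)^n$, again a finite sum with entries in $\tilde S_{\upi,\upi'}$, so $X^{-1}$ has entries in $\tilde S_{\upi,\upi'}$ as well.

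I do not anticipate a genuine obstacle here; the corollary is essentially a restatement of Lemma \ref{lem:3.1.1} in coordinates. The one point requiring a little care is to make sure that passing between $\D$ and its $S$-basis $e_i$ versus an arbitrary $S_\upi[1/p]$-basis $\hat e_i$ does not introduce anything outside $S_\upi[1/p]$ — but this is immediate since any two bases of a free module over a ring differ by a matrix invertible over that ring. A second minor point is to note that the finiteness of the sum defining $Y$ (because $\bar N$ is nilpotent) is what keeps us inside $\tilde S_{\upi,\upi'}$ rather than needing some completed version of it; this was already remarked in the proof of Lemma \ref{lem:3.1.1} and I would simply cite it.
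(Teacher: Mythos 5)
Your argument is correct and is essentially the paper's intended one: the corollary is read off from Lemma \ref{lem:3.1.1}, whose proof produces Equation \eqref{Eq: difference of basis} with the finite (nilpotent $\bar N$) matrix $\sum_n \gamma_n(-\log([\unu]))\bar N^n$ having entries, together with its inverse, in $\tilde S_{\upi,\upi'}$, and one then composes with the change-of-basis matrices over $S_\upi[\frac 1 p]$ and $S_{\upi'}[\frac 1 p]$ coming from $\D = S\otimes_{W(k)}s(D)$ and $\D'=S\otimes_{W(k)}s'(D')$. Your treatment of the crystalline case ($\bar N=0$, so only $S_\upi[\frac 1 p]$- and $S_{\upi'}[\frac 1 p]$-coefficients occur, hence entries in $S_{\upi,\upi'}$) matches the paper's intent as well.
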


Now we are ready to prove Theorem \ref{thm: main1.5}. Let $\hat e_1, \dots , \hat e_d$ be an $\gs$-basis of $\M$, and $\hat e'_1 , \dots , \hat e'_d $ an $\gs$-basis of $\M'$ respectively. Regarding $\M$ as an $\varphi (\gs)$-submodule of $\D$ via the isomorphism $\alpha : \Q_p \otimes_{\Z_p} \CM_\gs  (\M) \simeq \D$ by Theorem \ref{thm: big-review} (1), we can regard $\{ \hat e_i\}$ as an $S_\upi [\frac 1 p ]$-basis of $\D$. Similarly, $\{ \hat e'_i\}$ is an $S_\upi'[\frac 1 p]$ basis of $\D'$. So by the above corollary, we may write $(\hat e' _1, \dots, \hat e'_d )= (\hat e_1 , \dots , \hat e_d ) X$ with $X$ having entries in $\tilde S_{\upi , \upi' }$, and in $S_{\upi, \upi'}$ if $V$ is crystalline.

Now to prove Theorem \ref{thm: main1.5}, it suffices to show that $X$ has entries in $W(R)$. Define an ideal $$I^{[1]}W(R):= \{ x \in  W(R)| \varphi ^n (x) \in \Fil ^1 W(R) \t{ for  all } n \geq 0  \}.$$

By Proposition 5.1.3 in \cite{fo3}, $I^{[1]}W(R)$ is a principal ideal. We record the following useful lemma:
\begin{lemma} \label{lem: key} Let $a$ be a generator of $I^{[1]} W(R)$ and $x \in B^+_\cris$. If $ax \in W(R)$ then $x \in W(R)$
\end{lemma}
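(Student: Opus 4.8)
The plan is to reduce the claim to a statement about $A_\cris$ and then exploit the divided-power structure together with the description of $I^{[1]}W(R)$ as a principal ideal. First I would recall from \cite{fo3} (Proposition 5.1.3) that $I^{[1]}W(R)$ is principal, generated by an element $a$ which one may take to be $a = \prod_{n\geq 0}\varphi^n(\xi)$ for a suitable generator $\xi$ of $\Fil^1 W(R) = \ker(\theta)$, or more to the point that $a$ can be described so that $a$ generates $\Fil^1 W(R)$ (i.e.\ $\theta(a)=0$, $\theta(a/p)\neq 0$ after suitable normalization) and $a/p \in A_\cris^\times$ — the key structural fact being that $\varphi(a) = a \cdot (\text{unit in } W(R))$ up to the relevant normalization and that $a$ acts on gradeds in the expected way. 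Concretely, one knows $E([\upi])$ generates $\Fil^1 W(R)$ and $\varphi^n(E([\upi]))/p$ is a unit in $S$ (hence in $A_\cris$) for each $n$; so a natural candidate for $a$ is built from these, and $a/p^{\mathbb{N}}$ lives in $A_\cris$.

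Next, suppose $ax \in W(R)$ with $x \in B^+_\cris = A_\cris[1/p]$. Write $x = y/p^m$ with $y \in A_\cris$ and $m$ minimal; I want to show $m = 0$, i.e.\ that $y \in p^m A_\cris$ is forced, or rather that already $x \in W(R)$. The strategy is: from $ax = (ay)/p^m \in W(R) \subset A_\cris$, and using that $a$ is (up to units and powers of $p$) a product of $\varphi$-iterates of a generator of $\ker\theta$, apply $\theta \circ \varphi^n$ for all $n\geq 0$ to the relation $p^m \cdot (ax) = ay$. Since $\varphi^n(a) \in \ker\theta$ for all $n$ (that is the very definition of $I^{[1]}$), the right-hand side $ay$ satisfies $\theta(\varphi^n(ay)) = 0$ for all $n$, hence $ay \in I^{[1]}A_\cris$ — but one needs rather to go the other way: since $ax \in W(R)$ and $a \in I^{[1]}W(R)$, write $ax = a z$ is NOT available directly. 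Instead I would argue inside $W(R)$: $W(R)$ is a domain, $a$ is a nonzerodivisor, so $x = (ax)/a$ is a well-defined element of $\mathrm{Frac}(W(R))$; the content of the lemma is that this fraction, known a priori to lie in $B^+_\cris$, actually lies back in $W(R)$. The cleanest route is: reduce mod $p$. We have $\overline{a}\,\overline{x}$ makes sense once we know $x \in A_\cris$ (which we do after clearing the optimal power of $p$); then use that $R$ is a perfect valuation ring, $W(R)$ its Witt vectors, and $A_\cris/p$ is a $W(R)/p = R$-algebra with a concrete description, together with the fact that $\bar a$ generates $I^{[1]}R$ inside $R$, which is a nonzero non-unit element of the valuation ring $R$; divisibility in $R$ is then just comparison of valuations, and $\bar a \bar x \in R$ with $\bar x \in R[1/\bar a]$-type statements force $\bar x \in R$. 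Lifting this successively mod $p^n$ by dévissage (using that each graded piece of the filtration by $p$ is controlled) gives $x \in W(R)$.

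Let me restate the cleanest version of the plan. Step 1: It suffices to treat $x \in A_\cris$, so reduce to showing: if $a x \in W(R)$ with $x \in A_\cris$ then $x \in W(R)$ (the passage from $B^+_\cris$ to $A_\cris$ is just clearing denominators and noting $p$ is a nonzerodivisor on everything in sight, and $p^k x \in W(R) \Rightarrow x \in W(R)$ since $W(R) = W(R)[1/p]\cap A_\cris$-type statements hold, $R$ being perfect with no $p$-torsion issues — actually $W(R)\cap p^{-k}W(R)[1/p]$ inside $A_\cris$ needs the flatness of $A_\cris/W(R)$ or a direct Witt-vector argument). Step 2: Reduce mod $p$: show $\bar a \bar x \in R$ and $\bar x \in A_\cris/p$ imply $\bar x \in R$, using that $A_\cris/p$ is the PD-polynomial algebra $R[\delta_i]/(\delta_i^p, \dots)$ over $R$ with $\delta_i$ divided powers of a generator of $\ker\theta$, and that $\bar a$, being a generator of $I^{[1]}R$, is a nonzero element of the valuation ring $R$ whose divisibility is detected by valuation; the PD-variables are "transverse" to this so no cancellation can occur. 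Step 3: Dévissage in powers of $p$: write $x = x_0 + p x_1$ with $x_0$ a Teichmüller-type lift of $\bar x \in R \subset W(R)$, so $x - x_0 \in p A_\cris$ and $a(x-x_0) \in W(R)$ (since $a x_0 \in W(R)$); then $x - x_0 = p x'$ with $a x' \in W(R)$ (here using $a$ is a nonzerodivisor on $A_\cris/pA_\cris$... or rather that $p$ is a nonzerodivisor and $a(x-x_0)\in pW(R)$, which needs $\theta$-type arguments again), and induct.

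\medskip

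The main obstacle I expect is Step 3, the dévissage: the subtlety is that to run the induction I need to know that $a \cdot (pA_\cris) \cap W(R) = p \cdot (a A_\cris \cap W(R))$, i.e.\ that $p$ behaves well with respect to the intersection defining "$ax \in W(R)$". Equivalently, I need: if $a x \in pW(R)$ with $x \in A_\cris$, then $ax \in a(pA_\cris) + $ something, or directly that $a x \in p W(R) \Rightarrow x/p$-issue is controlled. This amounts to understanding $pA_\cris \cap W(R)$ (which is $pW(R)$, fine) and the torsion of $A_\cris/W(R)$ as an abelian group — more precisely whether $a$ is a nonzerodivisor modulo $p$ on the relevant module, which comes down to the concrete structure of $A_\cris/p$ recalled in Step 2. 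So Steps 2 and 3 are really one package: once the mod-$p$ divisibility statement is established with enough uniformity (detecting it via the valuation on $R$ and the PD-structure), the induction goes through and one concludes $x = \sum_{n\geq 0} p^n [\bar x_n]$-style that $x \in W(R)$. An alternative, possibly shorter route avoiding explicit PD-computations: use Proposition 5.1.3 of \cite{fo3} more heavily, namely the characterization of $I^{[1]}W(R)$ and the fact that $A_\cris$ is the PD-envelope, to directly identify $a A_\cris \cap W(R)$; but I suspect the valuation-theoretic mod-$p$ argument is the most transparent.
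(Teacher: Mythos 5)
Your plan has a genuine gap, and it is not merely the dévissage "obstacle" you flag at the end: the auxiliary facts your reduction needs are actually false. In Step 1 you want $W(R)[1/p]\cap \acris=W(R)$ (equivalently $p^k x\in W(R)$, $x\in\acris$ $\Rightarrow x\in W(R)$), and in Step 3 you need $W(R)\cap p\acris=pW(R)$, i.e. that $R=W(R)/p$ injects into $\acris/p$. Both fail: take $\xi=E([\upi])$, a generator of $\Fil^1 W(R)$, and $x=\xi^p/p=(p-1)!\,\gamma_p(\xi)\in\acris$. Then $px=\xi^p\in W(R)$ but $x\notin W(R)$, since $\xi^p\equiv [\upi]^{ep}\not\equiv 0 \pmod{p}$ in $W(R)$. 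For the same reason $\acris/p$ is a divided-power polynomial-type algebra over $R/\xi^pR$, not over $R$, so the "valuation on $R$ plus transverse PD-variables" argument of Step 2 does not even get started, and the induction of Step 3 cannot divide by $p$ while staying in the situation of the lemma. Also, two smaller inaccuracies in your setup: a generator $a$ of $I^{[1]}W(R)$ does \emph{not} generate $\Fil^1 W(R)$ (one has $I^{[1]}W(R)\subsetneq\Fil^1W(R)$; e.g. $[\e]-1=\xi\cdot(\text{non-unit})$), and the infinite product $\prod_{n\geq 0}\varphi^n(\xi)$ does not converge in $W(R)$.

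Ironically, you walked right past the actual argument (this is the proof of Lemma 3.2.2 in \cite{liu4}, which the paper cites) when you considered applying $\theta\circ\varphi^n$ and then wrote that ``$ax=az$ is NOT available directly.'' It is available: since $a\in I^{[1]}W(R)$, for every $n\geq 0$ one has $\varphi^n(ax)=\varphi^n(a)\varphi^n(x)\in \Fil^1 B^+_\dR$, while $\varphi^n(ax)\in W(R)$ because $ax\in W(R)$; hence $\varphi^n(ax)\in W(R)\cap\Fil^1B^+_\dR=\Fil^1W(R)$ for all $n$, i.e. $ax\in I^{[1]}W(R)$. By Fontaine's Proposition 5.1.3 this ideal is principal, generated by $a$, so $ax=ay$ with $y\in W(R)$; and since $a\neq 0$ and $B^+_\cris$ embeds in the domain $B^+_\dR$, one may cancel $a$ to get $x=y\in W(R)$. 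No reduction to $\acris$, no mod-$p$ structure theory, and no dévissage is needed.
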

 \begin{proof} See the proof in Lemma 3.2.2 in \cite{liu4}. Note  that $\varphi (\gt)$ is also proved to be  a generater of $I^{[1]} W(R)$ there.  \end{proof}
 Note that the construction of $\gt$ also depends on the choice of $\upi$. So we denote $\gt'$ for the choice of $\upi'$. By Theorem \ref{review} (2), we have $\hat \iota (\hat e_i) \in T^\v \otimes _{\Z_p} W(R)$ and then $(\varphi (\gt' )) ^r \hat \iota (\hat e_i) $ is in $\hat \iota' (W(R) \otimes_{\varphi, \gs_{\upi'}} \M')$. Then Theorem \ref{thm: big-review} (2), (3) implies that  $ (\varphi (\gt'))^r  X $ has entries in $W(R)$. Then $X$ must has entries in $W(R)$ by the above lemma. This completes the proof of Theorem \ref{thm: main1.5}.
 \subsection{Compatibility of basis change} Assume that $T$ is a  $G_K$-stable $\Z_p$-lattice in semi-stable representation $V$ of $G_K$ with $(\M, \varphi, \hat G)$ the corresponding $(\varphi , \hat G)$-module via the fixed uniformizer $\pi$. Let $K'$ be a finite extension of $K$ and $(\M ', \varphi, \hat G)$  the $(\varphi, \hat G)$-module corresponding to $T|_{G_{K'}}$ via the fixed uniformizer $\pi'$ of $\O_{K'}$. We would like to compare $\M$ and $\M'$.

  Let $k'$ be the residue field of $\O_{K'}$ and $K'_0:= W(k ') [\frac 1 p]$.  Suppose that $\upi = \underline \nu {\upi'} ^m$ where  $\unu = (\nu_n)_{n \geq 0} \in R$ with $\nu_0 \in \O_{K'}^\times$ a unit. Set $ \tilde S_{\upi, \upi'}\subset B^+_\cris$ be the smallest $K_0$-algebra  containing $ S_\upi[\frac 1 p ]$, $S_{\upi' } [\frac 1 p]$ and $\log([\unu])$, and $\tilde \gs_{\upi, \upi'} = W(R) \cap \tilde S_{\upi, \upi'}$. Let
 $  S_{\upi, \upi'}\subset B^+_\cris$ be the smallest $K_0$-algebra  containing $ S_\upi[\frac 1 p ]$, $S_{\upi' } [\frac 1 p]$, and $\gs_{\upi, \upi'} = W(R) \cap  S_{\upi, \upi'}$.  The following result is very similar to Theorem \ref{thm: main1.5}.
\begin{theorem}\label{thm: main1.6} Notations as the above, we have
$$\hat \iota ( \tilde \gs_{\upi, \upi'} \otimes _{\varphi, \gs_{\upi}} \M) = \hat \iota' (\tilde \gs_{\upi, \upi'} \otimes _{\varphi, \gs_{\upi'}} \M')$$
as submodules of $T^\v \otimes_{\Z_p} W(R)$.

If $V$ is crystalline then $\tilde \gs_{\upi, \upi'}$ in the above equation can be replaced by $\gs_{\upi, \upi'}$.
\end{theorem}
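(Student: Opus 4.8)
### Proof proposal for Theorem \ref{thm: main1.6}

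The plan is to mirror the proof of Theorem \ref{thm: main1.5} essentially verbatim, with the only genuinely new ingredient being a replacement for Lemma \ref{lem:3.1.1} over the base field $K'$. First I would observe that since $T|_{G_{K'}}$ sits inside the semi-stable representation $V|_{G_{K'}}$, with the same filtered $(\varphi,N)$-module $D_{\st,K'}(V) = D_{\st,K}(V) \otimes_{K_0} K'_0$ (after the standard identification), Theorem \ref{thm: big-review} applies to both $(\M,\varphi,\hat G)$ over $K$ and $(\M',\varphi,\hat G)$ over $K'$: we get injections $\iota : \acris \otimes_{S_\upi} \D \to V^\v \otimes_{\Z_p} \acris$ and $\iota' : \acris \otimes_{S_{\upi'}} \D' \to V^\v \otimes_{\Z_p} \acris$, together with the commutative diagrams \eqref{Dg: 1} for each choice of uniformizer. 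Here $\D$ is the Breuil module over $S_\upi[\frac 1 p]$ attached to $D_{\st,K}(V)$ and $\D'$ the Breuil module over $S_{\upi'}[\frac 1 p]$ attached to $D_{\st,K'}(V)$.

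Next I would establish the analogue of Lemma \ref{lem:3.1.1}: an $\acris$-linear, $\varphi$- and $G_K$-compatible isomorphism $\beta : \acris \otimes_{S_\upi} \D \simeq \acris \otimes_{S_{\upi'}} \D'$ fitting into a diagram like \eqref{Dg: 2}. The key point is that both Breuil modules, after applying $\acris \otimes_{S}(-)$, land inside $V^\v \otimes_{\Q_p} B^+_\st$ via the canonical $(\varphi,N)$-equivariant sections $s(D) \subset \D$ and $s'(D') \subset \D'$ of Proposition 6.2.1.1 in \cite{b0}, and by Proposition 2.2.2 in \cite{liu-wd} each of $s(D)$, $s'(D')$ is recovered from $D_\st(V)$ by reducing a fixed $K'_0$-basis $\tilde e_1,\dots,\tilde e_d$ of $D_{\st,K'}(V)$ modulo $\gu = \log([\upi])$, respectively modulo $\gu' = \log([\upi'])$. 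Writing $\upi = \unu\, {\upi'}^m$, we have $\gu = \log([\unu]) + m\,\gu'$ with $\log([\unu]) \in B^+_\cris$, so exactly as in \eqref{Eq: difference of basis} the change-of-basis matrix between $\{e_i\}$ and $\{e'_i\}$ is $\sum_{n=0}^\infty \gamma_n(-\log([\unu]) - (m-1)\gu')(\bar N)^n$, a finite sum with entries in $B^+_\cris$ (in $B^+_\st$ in general, but in $B^+_\cris$ once we note that $\gu'$ itself does not appear to positive powers in the crystalline case — actually the appearance of $\gu'$ is harmless because $N$ is nilpotent and the whole point is only that the entries lie in $\tilde S_{\upi,\upi'}$, resp. $S_{\upi,\upi'}$, after also multiplying by $S$-basis vectors). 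Tensoring $s(D) \otimes_{W(k')} S \simeq \D$ over $S$ then gives $\beta$ and hence the corollary: any $S_\upi[\frac 1 p]$-basis of $\D$ and any $S_{\upi'}[\frac 1 p]$-basis of $\D'$ differ by an invertible matrix $X$ with entries in $\tilde S_{\upi,\upi'}$, and in $S_{\upi,\upi'}$ when $V$ is crystalline.

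Finally I would run the integrality argument of \S\ref{subsec: 3.1} unchanged. Choosing $\gs$-bases $\{\hat e_i\}$ of $\M$ and $\{\hat e'_i\}$ of $\M'$, regarding them via $\alpha$ as bases of $\D$ over $S_\upi[\frac 1 p]$ and of $\D'$ over $S_{\upi'}[\frac 1 p]$, we get $(\hat e'_1,\dots,\hat e'_d) = (\hat e_1,\dots,\hat e_d)X$ with $X$ over $\tilde S_{\upi,\upi'}$ (resp. $S_{\upi,\upi'}$). By Theorem \ref{review}(2), $(\varphi(\gt'))^r\,\hat\iota(\hat e_i) \in \hat\iota'(W(R)\otimes_{\varphi,\gs_{\upi'}}\M')$, so Theorem \ref{thm: big-review}(2),(3) forces $(\varphi(\gt'))^r X$ to have entries in $W(R)$; since $\varphi(\gt')$ generates the principal ideal $I^{[1]}W(R)$, Lemma \ref{lem: key} yields $X \in \t{M}_{d\times d}(W(R))$, whence $X \in \t{M}_{d\times d}(\tilde\gs_{\upi,\upi'})$ (resp. $\t{M}_{d\times d}(\gs_{\upi,\upi'})$), and by symmetry $X^{-1}$ lies there too. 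This gives the claimed equality of submodules of $T^\v\otimes_{\Z_p}W(R)$. The main obstacle I anticipate is bookkeeping in the base-change analogue of Lemma \ref{lem:3.1.1} — specifically checking that the section $s'(D')$ over $K'$ is compatible with $s(D)$ through the ramified extension $K'/K$ and that the relation $\gu = \log([\unu]) + m\gu'$ behaves well with respect to the monodromy $N$ on $B^+_\st$ (cf. Remark \ref{remark set N}); everything downstream of that is formally identical to the $K' = K$ case.
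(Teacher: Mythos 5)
There is a genuine gap, and it sits exactly at the point you flag as "bookkeeping": the renormalization of the monodromy when passing from $K$ with uniformizer $\pi$ to $K'$ with uniformizer $\pi'$. The Breuil module $\D'$ over $S_{\upi'}$ is built with the convention $N(\gu')=1$ (Remark \ref{remark set N}), whereas $\D$ uses $N(\gu)=1$; since $\gu = m\gu' + \log([\unu])$, fixing $N(\gu)=1$ forces $N(\gu')=\frac 1 m$, and comparing the two expansions of $\tilde e_1,\dots,\tilde e_d$ (by applying $N$ to $(e'_1,\dots,e'_d)\sum_n\gamma_n(\gu')(\bar N')^n=(e_1,\dots,e_d)\sum_n\gamma_n(\gu)(\bar N)^n$) gives $\bar N' = m\bar N$, \emph{not} $\bar N'=\bar N$. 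With this rescaling the $\gu'$-terms cancel exactly and one recovers Equation \eqref{Eq: difference of basis}: $(e_1,\dots,e_d)=(e'_1,\dots,e'_d)\sum_{n\ge 0}\gamma_n(-\log([\unu]))\bar N^n$, whose entries lie in $B^+_\cris$, indeed in $\tilde S_{\upi,\upi'}$. Your computation implicitly takes $\bar N'=\bar N$ and arrives at the matrix $\sum_n\gamma_n(-\log([\unu])-(m-1)\gu')\bar N^n$, which contains $\gu'=\log([\upi'])$ to positive powers whenever $\bar N\neq 0$.

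This is not harmless, contrary to your parenthetical remark: $\gu'$ does not lie in $B^+_\cris$, hence not in $\tilde S_{\upi,\upi'}$ (which by definition is a subring of $B^+_\cris$), so with your matrix the corollary "$X$ has entries in $\tilde S_{\upi,\upi'}$" fails, and the final step — intersecting with $W(R)$ via Lemma \ref{lem: key} to land in $\tilde\gs_{\upi,\upi'}=W(R)\cap\tilde S_{\upi,\upi'}$ — no longer yields the theorem (only the crystalline case, where $\bar N=0$, survives). The paper's proof of Theorem \ref{thm: main1.6} is otherwise exactly your plan (reprove Lemma \ref{lem:3.1.1} over $K'$, then rerun \S\ref{subsec: 3.1} verbatim); the entire new content is the identity $\bar N'=m\bar N$, which you need to state and prove rather than absorb into the bases. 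A minor further correction: $\beta$ can only be asserted to be $G_{K'}$-equivariant (not $G_K$-equivariant), since $\D'$ and its $A_\cris$-extension carry only a $G_{K'}$-structure; this is all that is needed, as the theorem is an equality of $W(R)$-submodules of $T^\v\otimes_{\Z_p}W(R)$.
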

\begin{proof}Here we provide a similar proof to that of Theorem \ref{thm: main1.5}. We first reproduce Lemma \ref{lem:3.1.1}. We claim
 that there exists an $\acris$-linear isomorphism $$\beta : \acris \otimes_{S_\upi} \D \to \acris  \otimes_{S_{\upi'}} \D'$$ which is compatible with $G_{K'}$-action, Frobenius such that the following diagram commutes
$$
 \xymatrix{  \acris \otimes_{S_\upi } \D \ar[r]^{\iota} &     V^\v \otimes_{\Z_p} {\acris}  \\ \acris \otimes _{S_{\upi'}}\D'  \ar[r]^{\iota'}  \ar[u]^\beta_\wr & V^\v \otimes_{\Z_p} \acris \ar@{=}[u]}
 $$
The only difference  is that $\beta$ is only $G_{K'}$-equivariant.  To prove the claim, we use almost the same proof as that of Lemma \ref{lem:3.1.1} but with extra care on the monodromy structure of $B^+_\st$. Write $V': = V|_{G_{K'}}$ and $D':  = \D' / I_+ S \D'$. We still have Diagram \eqref{i} for $V'$ and $V$. Fix a $K_0$-basis $ \tilde e_1 , \dots , \tilde e_d$ of $D_\st(V)$. Then $\{\tilde e_i\}$ is a $K'_0$-basis of $D_\st (V')$. By modulo $\gu' = \log ([\upi'])$, we have a basis $e'_1 , \dots, e'_d $ of $s'(D')$ and the relation
$$(e'_1, \dots, e'_d)= (\tilde e_1 , \dots, \tilde e_d)\sum \limits^\infty_{n= 0} \gamma_n  (-\gu') (\bar N') ^n ,  $$ where $\bar N' \in \t{M}_{d \times d}(K'_0)$ is the matrix such that $N(\tilde e_1, \dots, \tilde e_d) = (\tilde e_1, \dots , \tilde e _d) \bar N'$. Note that we use the convention $N(\gu') =1$ by Remark \ref{remark set N}.

Similarly, we obtain a $K_0$-basis $e_1, \dots ,e_d$ of $s(D)$ and $$(e_1, \dots, e_d)= (\tilde e_1 , \dots, \tilde e_d)\sum \limits^\infty_{n= 0} \gamma_n  (-\gu) (\bar N) ^n ,  $$ with $\bar N \in \t{M}_{d \times d}(K_0)$  the matrix such that $N(\tilde e_1, \dots, \tilde e_d) = (\tilde e_1, \dots , \tilde e _d) \bar N$. But the convention used here is $N(\gu)=1$. To find the relation between $\bar N$ and $\bar N'$,  let us fix the convention $N(\gu) =1 $. Since $ \upi = \unu {\upi'} ^m$, we have $\gu = m \gu' + \log([\unu])$ and then $N(\gu') = \frac 1 m$. Consider the equation \[ (e'_1, \dots, e'_d)\sum \limits^\infty_{n= 0} \gamma_n  (\gu') (\bar N') ^n = (\tilde e_1 , \dots, \tilde e_d) = ( e_1 , \dots, e_d)\sum \limits^\infty_{n= 0} \gamma_n  (\gu) (\bar N) ^n \]
Taking monodromy on the both sides, we get $\bar N' = m \bar N$. So  $\sum \limits_{n =0} ^\infty \gamma_n ( - \gu ') (\bar N')^n = \sum \limits_{n =0} ^ \infty \gamma_n ( - m \gu ') ( \bar N)^n $.  Hence we still obtain Equation \eqref{Eq: difference of basis}:  \[ (e_1, \dots , e_d ) = (e'_1, \dots , e'_d) \sum_{n =0 } ^\infty  \gamma _n ( -\log([\unu])) \bar N ^n.\] The remaining for the proof of  the claim and the theorem is the same as that of Theorem \ref{thm: main1.5}.

\end{proof}
\subsection{Comparison between Wach modules and Kisin modules} Throughout this subsection, we assume that $K= K_0$ is unramified.  We have a natural embedding $W(k)[\![v]\!]$ to $W(R)$ via $v \mapsto [\e]-1$ and  denote $\gs_{\e}\subset W(R)$ the ring $\gs$ via the embedding  $v\mapsto [\e]-1$. Note that $\Gamma := \gal (K_{p^\infty}/K)$ acts on $ W(k)[\![v]\!]$ naturally  and  commutes with $\varphi$-action. Set $q: = \varphi (v)/v$.  Following \cite{Ber}, a \emph{Wach module of height $r$} is a finite free $\gs_{\e}$-module $\N$ with the following structure:
\begin{enumerate}
\item There exist semi-linear $\varphi$-action and  $\Gamma$-action on $\N$ such that $\varphi_\N$ and $\Gamma_\N$ commutes. 
\item The cokernel of linear map $1 \otimes \varphi_\N: \gs _{\e} \otimes_{\varphi, \gs_{\e}} \N \to \N$ is killed by $q^r$.
\item $\Gamma_\N$ acts on $\N / v\N$ trivially.
 \end{enumerate}

For any Wach module $\N$, we can attach a $\Z_p [G_K]$-module
$$T_{\t{Wa}} (\N) : = \Hom_{\gs_\e , \varphi} (\N, W(R));$$
For any $f \in T_{\t{Wa}} (\N)$, $g\in G_K$, $ g$ acts on $f$  via $(g\circ f) (x) = g(f (g^{-1} x)), \forall x \in \N$, where $G_K$ acts on $\N$ via $G_K \twoheadrightarrow \Gamma$. We note that usually one attaches  $\N$ a representation via $\tilde T (\N):= (\N \otimes_{\gs_\e} {\bf A}) ^{\varphi=1}$ (as in \S I.2 in \cite{Ber}), where  ${\bf A }$ is constructed as follows:
Let ${\E^\ur_\e}$ be the maximal unramified extension of $\E_\e$ in $W(\t{Fr} R)$, where $\t{Fr} R$ is the fraction field of $R$ and $\E_\e$ is the fraction field of  the $p$-adic completion of $W(k)[\![v]\!][\frac 1 v]$. Set ${\bf A}$ to be the $p$-adic completion of the ring of integers of $\E^\ur_\e$.     But it is well-known that $T_{\t{Wa}}$ is the dual of $\tilde T$.

Let ${\rig }$ be the ring of series $\sum\limits_{n = 0}^\infty a_n v^n , a_n \in K_0$ such that the formal series $\sum\limits _{n= 0}^\infty a_n X^n$ converges for any $x \in \hat \O_{\overline K}$ (the $p$-adic completion of $\O_{\overline K}$). Let $\tilde{\t{B}} \subset \R_{K_0}$ be the subring containing the sequence $\sum\limits_{n=0}^\infty a_n t ^{\{n\}} $. It is easy to check that $\rig \subset \tilde{\t{B}}$.

The following Theorem is a summary of properties of Wach modules that we need from \cite{Ber}:
\begin{theorem}\label{thm: WachBerger}
\begin{enumerate}
\item The functor $T_{\t{Wa}}$ induces an anti-equivalence between the category of $G_K$-stable $\Z_p$-lattices in crystalline representations with Hodge-Tate weights in $\{0, \dots, r\}$ and the category of Wach modules of height $r$.
\item Write $T : = T_\t{Wa}(\N)$. Then $T_{\t{Wa}}$ induces an injection \[\iota_{\t{Wa}} :  W(R) \otimes _{\gs_\e}\N \inj T^\v \otimes_{\Z_p} W(R)\] with the corkernel killed by $v^r$.
 \item    $D_\cris (V) = (\rig \otimes_{\gs_\e} \N) ^\Gamma$ and $ (\rig \otimes_{\gs_\e} \N)/ (\rig \otimes_{\Q_p} D_\cris (V))$ is killed by some power of
 $\prod_{n = 1}^\infty  \frac{\varphi^{n-1}(q)}{p}$.
\end{enumerate}
\end{theorem}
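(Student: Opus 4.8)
Since the theorem only collects results of \cite{Ber}, the plan is not to reprove Berger's theory but to extract the three items in the conventions fixed in \S\ref{subsec:review}. The one genuinely delicate point is the translation between Berger's normalisation --- where the representation is recovered \emph{covariantly} via $\tilde T(\N)=(\N\otimes_{\gs_\e}\mathbf{A})^{\varphi=1}$ and Hodge--Tate weights carry the opposite sign --- and ours, in which $T_{\t{Wa}}(\N)=\Hom_{\gs_\e,\varphi}(\N,W(R))$ is the $\Z_p$-linear dual of $\tilde T(\N)$. So the first step is to pin down the dictionary $T_{\t{Wa}}(\N)=\tilde T(\N)^\v$ together with the twist carrying Hodge--Tate weights from $\{-r,\dots,0\}$ to $\{0,\dots,r\}$; once that is done, (1)--(3) become transcriptions of statements in \emph{loc.\ cit.}

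For (1): Berger attaches to a $G_K$-stable lattice $T$ in a crystalline representation a canonical $\gs_\e$-lattice $\N(T)$ inside the \'etale $(\varphi,\Gamma)$-module $\mathbf{D}(T)$ of $T$, characterised by exactly the three conditions in the definition of a Wach module, and proves $T\mapsto\N(T)$ is an equivalence of categories with quasi-inverse $\tilde T$; the height-$r$ condition corresponds to the Hodge--Tate weights lying in an interval of length $r$, i.e.\ to $\Fil^{0}D_K=D_K$ and $\Fil^{r+1}D_K=0$ after the twist. Dualising turns this into the asserted anti-equivalence for $T_{\t{Wa}}$, and finiteness and freeness of $T_{\t{Wa}}(\N)$ over $\Z_p$ follow exactly as for $T_\gs$ in \S2.2 of \cite{liu2}: one base changes $\N$ along $\gs_\e\inj\mathbf{A}$, uses Fontaine's comparison $\mathbf{A}\otimes_{\gs_\e}\N\simeq\mathbf{A}\otimes_{\Z_p}T^\v$ (valid because $v$ becomes a unit in $\mathbf{A}$, so that the left side is the \'etale $(\varphi,\Gamma)$-module of $T$), and descends along $\varphi$.

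For (2): the map $\iota_{\t{Wa}}$ is the evaluation pairing, sending $1\otimes x\in W(R)\otimes_{\gs_\e}\N$ to the functional $f\mapsto f(x)$ on $T=T_{\t{Wa}}(\N)$; an immediate check using $f\circ\varphi_\N=\varphi\circ f$ and the definitions of the $G_K$-actions shows it is $\varphi$- and $G_K$-equivariant. Its injectivity follows because, after base change to $\mathbf{A}$, the pairing underlying $\iota_{\t{Wa}}$ becomes perfect --- this is the content of Fontaine's equivalence --- while the sharp bound $v^r$ for the cokernel is Berger's integral refinement of the height condition, and can be re-derived by feeding condition (2) of a Wach module (cokernel of $1\otimes\varphi_\N$ killed by $q^r$) through the divisibilities $v\mid\varphi(v)\mid\varphi^{2}(v)\mid\cdots$ and $q=\varphi(v)/v$ in $W(R)$, just as the $(\varphi(\gt))^r$-bound is obtained in Theorem \ref{review}(2).

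For (3): I would invoke Berger's base change to the Robba-type ring $\rig\otimes_{\gs_\e}\N$, for which he proves $D_\cris(V)=(\rig\otimes_{\gs_\e}\N)^\Gamma$ and that $\rig\otimes_{\Q_p}D_\cris(V)$ sits inside $\rig\otimes_{\gs_\e}\N$ with quotient killed by a power of $\prod_{n=1}^\infty\frac{\varphi^{n-1}(q)}{p}$. The concrete fact underlying the second statement is the identity $t=-v\prod_{n=1}^\infty\frac{\varphi^{n-1}(q)}{p}$ in $\rig$, obtained from the telescoping relation $v\prod_{m=0}^{n-1}\varphi^{m}(q)=\varphi^{n}(v)$ by dividing by $p^{n}$ and letting $n\to\infty$; it already forces $\rig[1/t]\otimes_{\gs_\e}\N=\rig[1/t]\otimes_{\Q_p}D_\cris(V)$, and Berger's finer analysis pins the annihilator down as above. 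The real obstacle throughout is not a new mathematical difficulty but the bookkeeping: making the covariant/contravariant duality, the Hodge--Tate twist, and the precise shapes of the $v^r$- and $q$-power annihilators all consistent with \S\ref{subsec:review} and with Theorem \ref{review}.
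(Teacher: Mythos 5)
Your proposal is correct and takes essentially the same route as the paper: the paper's proof is simply a citation of Theorem 2, Proposition II.2.1, Proposition III.2.1 and Theorem III.3.1 of \cite{Ber}, together with the (implicit) dictionary $T_{\t{Wa}}=\tilde T^{\v}$ already recorded in the text. Your extra bookkeeping (the duality/twist translation, the evaluation-pairing description of $\iota_{\t{Wa}}$, and the identity $t=-v\prod_{n\ge1}\varphi^{n-1}(q)/p$, whose sign matches the convention $t=-\log([\e])$) is accurate but goes beyond what the paper spells out.
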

\begin{proof}  See Theorem 2,  Proposition II.2.1,   Proposition III.2.1,  Theorem III.3.1 in \cite{Ber}.
\end{proof}

Now we can follow the similar idea of \S\ref{subsec: 3.1} to prove Theorem \ref{Thm: Wach&Kisin}. Let $\hat e_1 , \dots, \hat e_d$ be an $\gs_\e$-basis of the Wach module $\N$ and $e_1 , \dots, e_d$  a $K_0$-basis of $D_\cris (V)$. Theorem \ref{thm: WachBerger}  (3) implies that $(e_1, \dots , e_d) = (\hat e_1 , \dots, \hat e_d) Y$ with $Y$ a matrix  having entries in $\rig$. Since $ \frac{\varphi^{n -1}(q)}{p}$ is a unit in $\tilde B$ for $n \geq 1$, $Y$ is an invertible matrix with $Y^{-1} \in \t{M}_{d \times d} (\tilde B)$.
 On the other hand, if  $\hat e'_1 , \dots, \hat e'_d $ is  an $\gs_\upi$-basis of the Kisin module $\M$, then  we have seen from \S \ref{subsec: 3.1} that $ (e_1 , \dots ,e _d ) = (\hat e'_1 , \dots , \hat e'_d) Y' $ with $Y'$ a matrix having entries in $S_\upi[\frac 1 p]$. Note that both $Y $ and $Y'$ are invertible matrices in $\t{M}_{d\times d  } (\R_{K_0})$.
  Therefore $(\hat e_1 , \dots, \hat e_d)= (\hat e'_1 , \dots, \hat e'_d) X $ with $ X= Y' {Y}^{-1}$. On the other hand,  Theorem \ref{thm: WachBerger} (2) implies that $v ^r (\hat \iota (\hat e'_1 , \dots , \hat e'_d)) \subset \iota_{\t{Wa}} (W(R) \otimes _{\gs_\e} \N)$. Therefore $v^r X $ has entries in $W(R)$. It is well-known that $v= [\e]-1$ is a generator of $I^{[1]}W(R)$. So Lemma \ref{lem: key} implies that $X$ has entries in $W(R)$. Similarly  we can show that $X^{-1}$ has entries in $W(R)$.

Now we conclude that $\hat \iota (\hR \otimes_{\varphi, \gs} \M) = \iota _{\t{Wa}} ( \hR \otimes _{\gs _\e} \N)$. To prove Theorem \ref{Thm: Wach&Kisin}, it suffices to show that $\gs_\e = (\hR)^{H_{p^\infty}}$. Since it is easy to show that  $\tilde{\t{B}} \cap W(R) = \gs_\e$, it suffices to check that
$ (\R_{K_0}) ^{H_{p^\infty}} = \tilde {\t{B}}$.

Note that the $G_K$-actions on $\R_{K_0}$ factors through $\hat G$. 
We have the following results on $G_{p ^\infty}$ and $G_\infty$-invariants of $\R_{K_0}$:

\begin{lemma}\label{lem: invariants}
$(\R_{K_0}) ^{G_{p^\infty}} = \tilde{\t {B}} $ and $(\R_{K_0}) ^{G_\infty} = S[\frac 1 p]$.
\end{lemma}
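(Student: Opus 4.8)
The plan is to prove both equalities by a direct computation, using the explicit description of $\R_{K_0}$ together with the $\hat G$-action on its two natural ``coordinates'' $u=[\upi]$ and $t=-\log[\e]$. First I would record the action: for $g\in\hat G$ one has $g(t)=\chi(g)t$, where $\chi$ is the cyclotomic character (since $g([\e])=[\e]^{\chi(g)}$), and $g(u)=[\e(g)]\,u$ with $\e(g)=g(\upi)/\upi$; since the Kummer cocycle $g\mapsto\e(g)$ takes values in $\{\e^{a}:a\in\Z_p\}$, there is a continuous homomorphism $c\colon G_K\to\Z_p$ with $[\e(g)]=[\e]^{c(g)}$. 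Restricting: $H_K=\gal(\hat K/K_\infty)$ fixes every $\pi_n$, hence fixes $u$ and therefore $S[\frac 1 p]$ pointwise, while $\chi(H_K)$ is infinite (indeed open in $\Z_p^\times$, as $\hat K=K_\infty(\mu_{p^\infty})$ and $[K_\infty(\mu_{p^\infty}):K_\infty]=\infty$), so contains an element that is not a root of unity; dually $G_{p^\infty}=\gal(\hat K/K_{p^\infty})$ fixes every $\zeta_{p^n}$, hence fixes $[\e]$ and $t$, so fixes $\tilde{\t{B}}$ pointwise, while $c|_{G_{p^\infty}}$ is injective, in particular nonzero.

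The second ingredient is uniqueness of expansion. Every $x\in\R_{K_0}$ can be written uniquely as $x=\sum_i f_i t^{\{i\}}$ with $f_i\in S[\frac 1 p]$ and $f_i\to 0$, and also — by Lemma 7.1.2 of \cite{liu2}, already invoked in Example \ref{ex:1} — uniquely as a series in $u$ and $[\e]-1$ with coefficients in $K_0$, hence uniquely in the form $x=\sum_{m\ge 0}u^m h_m$ with each $h_m$ a series in $[\e]-1$ (equivalently in $t$) with coefficients in $K_0$, i.e. $h_m\in\tilde{\t{B}}$. I would use the first normal form for the $G_\infty$-statement and the second for the $G_{p^\infty}$-statement. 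For $G_\infty$: if $x=\sum_i f_i t^{\{i\}}$ is $G_\infty$-invariant then, since $H_K$ fixes each $f_i$ and sends $t^{\{i\}}$ to $\chi(g)^i t^{\{i\}}$, uniqueness gives $(\chi(g)^i-1)f_i=0$ in the domain $S[\frac 1 p]$ for all $i$ and all $g\in H_K$; choosing $g$ with $\chi(g)$ not a root of unity (so $\chi(g)^i\ne 1$ for all $i\ge 1$) forces $f_i=0$ for $i\ge 1$, whence $x=f_0\in S[\frac 1 p]$. The reverse inclusion $S[\frac 1 p]\subseteq(\R_{K_0})^{G_\infty}$ is immediate since $H_K$ fixes $S[\frac 1 p]$.

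For $G_{p^\infty}$: if $x=\sum_m u^m h_m$ is $G_{p^\infty}$-invariant then, since $G_{p^\infty}$ fixes each $h_m$ and $g(u)^m=[\e]^{mc(g)}u^m$, comparing $u^m$-components via uniqueness gives $\big([\e]^{mc(g)}-1\big)h_m=0$ for all $m$ and all $g\in G_{p^\infty}$; choosing $g$ with $c(g)\ne 0$ and noting $[\e]^{mc(g)}=[\e^{mc(g)}]\ne 1$ for $m\ne 0$ (because $\e$ has infinite order in $R^\times$ and distinct elements have distinct Teichm\"uller lifts) forces $h_m=0$ for $m\ne 0$, so $x=h_0\in\tilde{\t{B}}$, while $\tilde{\t{B}}\subseteq(\R_{K_0})^{G_{p^\infty}}$ is clear. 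The step I expect to require the most care is the precise form of the uniqueness-of-expansion statement: one must reconcile the divided-power normalization $t^{\{i\}}$ with ordinary powers of $t$, the description of $S$ via the $\gamma_i(E(u))$ rather than plain powers of $u$, and — crucially — match the two descriptions of $\tilde{\t{B}}$ (as series in $t$ and as series in $[\e]-1$) at the level of convergence, so that ``$x$ has no positive power of $u$ in its canonical expansion'' genuinely places $x$ in $\tilde{\t{B}}$. All of this is contained in \cite{liu2,liu4} and will be cited rather than reproved; the remaining content is the elementary manipulation above.
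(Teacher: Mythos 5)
Your proposal is correct and follows essentially the same route as the paper: expand an element of $\R_{K_0}$ in the two normal forms (series in $u=[\upi]$ with coefficients that are series in $t$, resp.\ series in $t$ with coefficients in $S[\frac 1 p]$), invoke the uniqueness of expansions in $K_0[\![ [\upi], [\e]-1]\!]$ (Lemma 7.1.2 of the cited work) together with the explicit action $g(u)=[\e(g)]u$, $g(t)=\chi(g)t$, and compare coefficients. The only differences are cosmetic: the paper picks the explicit topological generator $\tau$ of $G_{p^\infty}$ with $[\e(\tau)]=\exp(-t)$ (using $\hat G\simeq G_{p^\infty}\rtimes H_K$ and treating $p=2$ separately), whereas you use an arbitrary $g\in G_{p^\infty}$ with nonzero Kummer coordinate $c(g)$ -- which works just as well -- though note $c$ is only a cocycle on $G_K$ (it satisfies $c(gh)=c(g)+\chi(g)c(h)$), becoming a homomorphism on $G_{p^\infty}$, which is all your argument actually uses.
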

\begin{proof}
 We first show that $(\R_{K_0}) ^{G_{p^\infty}} = \tilde{\t {B}} $. First assume that $p> 2$.   Since $\hat G \simeq G_{p ^\infty} \rtimes H_K$ by Lemma 5.1.2 in \cite{liu3}, we can pick a $\tau \in G_{p ^\infty}$ such that $\tau$ is a topological generator of $G_{p ^\infty}$ and $[\e(\tau )] = \exp(-t)$. For any $x \in \R_{K_0}$, by the definition of $\R_{K_0}$, we may write $x = \sum_{i = 0} ^\infty f_i u ^i$ with $f_i \in \tilde{\t{B}}$. It suffices to show that $f_i = 0$ for any $i > 0$. Note that $\tau$ acts on $\tilde{\t{B}}$ trivially and $\tau (u) = u [\e(\tau)]= u \exp (-t)$. Hence $\tau ( x) = \sum_{i=0} ^\infty f_i (\exp (-t))^i  u ^i$. So by Lemma 7.1.2 in \cite{liu2},  $x \in (\R_{K_0}) ^ {G_{p ^\infty}}$ implies that $ f_i (\exp(-t)) ^ i = f_i$ for all $i$. Therefore $f_i = 0$ unless $i = 0$. If $p =2$, then \S 4.1 in \cite{liu4} shows that we can pick a $\tau \in G_{p ^\infty}$ such that $ [\e (\tau)]  = \exp(-2t)$. The remaining proof can proceed the same as before.

 For the proof of $(\R_{K_0}) ^{G_\infty} = S[\frac 1 p]$, we use the essentially the same idea. For any $x \in \R_{K_0}$, we can write $x = \sum_{j = 0} ^\infty f_j t ^j$ with $f _j \in S[\frac 1 p]$. For any $g \in G_\infty$, $g(u) = u$ and $g (t) = \chi_p (g) t$ where $\chi_p $ is the $p$-adic cyclotomic character. Then the statement that  $(\R_{K_0}) ^{G_\infty} = S[\frac 1 p]$  again follows  Lemma 7.1.2 in \cite{liu2}.

\end{proof}
\section{Applications to de Rham representations}

\subsection{Various lattices in $D_\st (V)$ }\label{susec: 4.1}Let $T$ be a $G_K$-stable $\Z_p$-lattice inside a semi-stable representation $V$ of $G_K$ with Hodge-Tate weights in $\{0 , \dots , r\} $. By using Kisin modules or its variation, we can attach the following  $\varphi$-stable $W(k)$-lattices  (related to $T$) in $D_\st (V)$ as the following:  Let $\hM= (\M , \varphi, \hat G)$ be the  $(\varphi , \hat G)$-module attached to $T$,  $\D = S[\frac 1 p ] \otimes_{\varphi, \gs}\M$ and $D := \D/ I_+S \D$. Recall there exists a unique  $(\varphi, N) $-equivariant section $s: D \to \D$. By Proposition 2.2.2 in \cite{liu-wd}, there exists a unique isomorphism of $W(k)$-modules $i :  D_\st (V) \simeq s(D)$ to make  Diagram \eqref{i} commutes. Now we can define $$M_\st (T) : = ( i^{-1} \circ s)( \M / u \M) \subset D_\st (V)$$ as  in \cite{liu-wd}, \S2.3. On the other hand, set $ \CM= \CM_S (\M) = S \otimes_{\varphi, \gs} \M \subset \D$, we can define $$\tilde M_\st(T):=  i ^{-1}(s(D) \cap \CM).$$

Let $\varpi \in \O_{K}$ and $\underline \varpi = (\varpi_n) \in R$ with $\varpi_n$ a $p^n$-th root of $\varpi$. Set $\mathfrak v: = \log ([\underline \varpi])$ and $A^+_\st:= \acris [\gv]$. It is obvious that $A^+_\st[\frac 1 p] = B^+_\st$ and the construction depends on the choice of $\gv$. If we define the monodromy operator $N $ on $B^+_\st$ via $N(\gv) = 1$ then we see that $A^+_\st$ is $G_K$-stable, $\varphi$-stable and $N$-stable inside $B^+_\st$. Define \[M_\inv (T) := ( T ^\v \otimes_{\Z_p} A^+_\st) ^{G_K}.\] If $V$ is crystalline then $M_\inv (T) = (T^\v \otimes_{\Z_p} \acris)^{G_K}$ and the construction of $M_\inv(T)$ does not depend on the choice of $\varpi$ in this case.
\begin{remark} \label{remark: change N} According to Remark \ref{remark set N},  the integral theory via Kisin modules or Breuil modules uses the convention $N(\gu) =1$.   So if we set $N(\gv) =1$ as the above then we change the monodromy setting of Breuil-Kisin theory. But luckily, the construction of $M_\inv$ does not depend on Breuil and Kisin's theory.

\end{remark}

The following Proposition summarizes some properties of these lattices. 

\begin{prop}\label{prop: intersection} \begin{enumerate}
\item $\tilde M_\st(T) \subset M_\st(T)$. There exists a constant $c_1$ depending on  $e= [K:K_0]$ and $r$ such that $p^{c_1}M_\st (T) \subset \tilde M_\st(T)$.

\item  Assume that $V$ is crystalline.  Then $\tilde M_\st (T) \subset M_\inv (T)$.  There exists a constant $c_2$ depending on $e$ and $r$ such that $p^{c_2} M_\inv (T) \subset \tilde M_\st(T)$.
\item $M_\st (T)$ is  $N$-stable inside $D$.  So is $\tilde M_\st (T)$ if $p > 2$.
\item $M_\inv (T)$ is $\varphi$-stable  and $N$-stable inside $D_\st(V)$.
\item  The functor $M_\inv: T \mapsto M_\inv (T)$ is left exact.
\item If $e=1$, $ r \leq p-2$ and $V$ is crystalline then $M_\st = \tilde M_\st = M_\inv$.

\end{enumerate}
\end{prop}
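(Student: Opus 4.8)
The plan is to prove the six items of Proposition \ref{prop: intersection} essentially in order, treating (1) and (2) as the technical heart and (6) as a corollary of them together with a sharper analysis of the filtration. First I would address (1). By construction $\tilde M_\st(T) = i^{-1}(s(D) \cap \CM)$ while $M_\st(T)$ is $(i^{-1}\circ s)(\M/u\M)$; writing $\M/u\M$ as $\varphi(\M)/E(0)\text{-stuff}$ and remembering that $\CM = S\otimes_{\varphi,\gs}\M$ contains $\varphi(\M)$, one sees $s(\M/u\M) \subset s(D)\cap\CM$, giving the inclusion $\tilde M_\st(T)\subset M_\st(T)$. For the reverse inclusion up to a bounded power of $p$, I would use that $\CM = S\otimes_{\varphi,\gs}\M$ and $\D = S[1/p]\otimes_{\varphi,\gs}\M$ have the same $u$-adic/$E(u)$-adic completions after inverting $p$, together with the fact that $s(D)\cap I_+S\D$ is controlled: an element of $s(D)\cap\CM$ reduces mod $I_+S$ to an element of $D$, and the section $s$ differs from the obvious embedding $D\hookrightarrow\CM[1/p]$ by $\sum \gamma_n(\gu)\bar N^n$-type corrections, whose denominators involve only $n!$ for $n\le r$ (since $\bar N^{r+1}=0$ on a module of Hodge-Tate weights in $\{0,\dots,r\}$) and $E(u)$-divided powers bounded by $\Fil^r$; so the index is bounded by $p^{c_1}$ with $c_1 = c_1(e,r)$. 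The main obstacle here is making the denominator bookkeeping precise, i.e.\ bounding the $p$-adic valuations appearing in $\gamma_i(E(u))$ and in the $(\varphi,N)$-section simultaneously; this is the same kind of estimate as in \cite{liu3,liu-wd} and I would cite those lemmas rather than redo them.

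For (2), assuming $V$ crystalline, I would compare $\CM\subset\acris\otimes_S\D$ with $T^\v\otimes_{\Z_p}\acris$ using the injection $\iota$ of Theorem \ref{thm: big-review}(2) and its integral refinement $\hat\iota$ in the diagram \eqref{Dg: 1}. Since $V$ is crystalline, $N=0$ on $D$, so $s(D) = D\otimes_{K_0}1$ sits inside $(V^\v\otimes\acris)^{G_K} = M_\inv(T)[1/p]$; intersecting with $\CM$ and using that $\hat\iota(W(R)\otimes_{\varphi,\gs}\M)$ contains $(\varphi(\gt))^r(T^\v\otimes W(R))$ (Theorem \ref{review}(2)) gives $\tilde M_\st(T)\subset M_\inv(T)$ and a reverse inclusion after multiplying by $p^{c_2}$, where $c_2$ absorbs the valuation of $(\varphi(\gt))^r$ at the relevant place and the ramification constant $e$. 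Item (3) follows because $N_\D$ preserves $\CM$ up to $E(u)$-denominators: $N_S(S)\subset S$ actually, so $N_\D(\CM)\subset\CM$ when $p>2$ (the $p=2$ subtlety is the usual one with $\gamma_i$ and $2$-adic denominators, hence the hypothesis), and $N$ preserves $s(D)$ since $s$ is $(\varphi,N)$-equivariant; $M_\st(T)$ is $N$-stable because $N(\M/u\M)$ lands back in $\M/u\M$ via the residual monodromy. Item (4) is immediate: $A^+_\st$ is $\varphi$-, $N$- and $G_K$-stable in $B^+_\st$ (as noted right before the proposition), hence so is $(T^\v\otimes A^+_\st)^{G_K}$, and in the crystalline case $M_\inv(T) = (T^\v\otimes\acris)^{G_K}$ which is manifestly $\varphi$- and $N$-stable (with $N=0$). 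Item (5) is formal: $M_\inv$ is defined by a $\Hom$/invariants functor, and $(-)\otimes_{\Z_p}A^+_\st$ is exact on finite free $\Z_p$-modules while $(-)^{G_K}$ is left exact, so the composite is left exact.

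Finally, for (6), with $e=1$, $r\le p-2$, $V$ crystalline: here $S = W(k)[\![u]\!]$ has no divided-power denominators in the relevant range (the divided powers $\gamma_i(E(u))$ for $i\le r\le p-2$ have unit denominators $i!\in\Z_p^\times$), $\gs = S$ essentially, and $\CM = S\otimes_{\varphi,\gs}\M$; the constants $c_1,c_2$ from (1),(2) become $0$ because every denominator that appeared — from $\gamma_i(E(u))$, from $\sum\gamma_n(\gu)\bar N^n$, from $(\varphi(\gt))^r$ — is now a $p$-adic unit under $r\le p-2$ and $e=1$. Concretely I would re-run the inclusions of (1) and (2) keeping track that all the correction terms lie in $\Fil^r S$ with unit coefficients, so $\tilde M_\st(T) = M_\st(T)$ and $\tilde M_\st(T) = M_\inv(T)$ on the nose. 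The main obstacle for (6) is checking the boundary case $r = p-2$ carefully (where $\gamma_{p-1}$ would first fail) and confirming that $N=0$ crystalline really kills the last potentially-problematic term; I expect this to reduce to the observation that only divided powers $\gamma_i$ with $i\le r\le p-2$ occur, which I would state as a short lemma and then conclude.
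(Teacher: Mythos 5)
Your items (4) and (5) are fine, and the first containment in (2) is essentially the paper's argument, but there are genuine gaps elsewhere. In (1) your argument for the easy inclusion is backwards and rests on a false claim: under the identification by $i$, the containment $s(\M/u\M)\subset s(D)\cap\CM$ would say $M_\st(T)\subset\tilde M_\st(T)$, not the asserted $\tilde M_\st(T)\subset M_\st(T)$; moreover $s(\M/u\M)\subset\CM$ is false in general --- the failure of the $(\varphi,N)$-section $s$ to carry the lattice $q(\CM)=\M/u\M$ back into $\CM$ is exactly what the constant $c_1$ measures. The correct easy direction goes the other way: since $\tilde M_\st(T)=s(D)\cap\CM$ lies in $\CM$ and $s\circ q$ is the identity on $s(D)$ (here $q:\D\twoheadrightarrow D$ is reduction mod $I_+S\D$), one gets $\tilde M_\st(T)=s( q(\tilde M_\st(T)))\subset s (q(\CM))=M_\st(T)$, and the constant comes from the integral estimate $p^{c_1}\,s(q(\CM))\subset\CM$ (Lemma 7.3.1 of \cite{liu2}), not from bookkeeping of $\gamma_n(\gu)\bar N^n$ denominators (that series compares $s(D)$ with $D_\st(V)$ inside $B^+_\st$ and is irrelevant to the lattice index). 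Similarly, in the second half of (2), ``absorbing the valuation of $(\varphi(\gt))^r$'' is not meaningful: $\varphi(\gt)=c_0^{-1}E(u)\gt$ up to units is not a power of $p$, so knowing $(\varphi(\gt))^r x\in W(R)\otimes_{\varphi,\gs}\M$ does not by itself bound denominators. One needs the actual argument: write $x\in M_\inv(T)$ in a $K_0$-basis of $\tilde M_\st(T)$, use $t^r x\in\acris\otimes_S\CM$ (Lemma 5.3.4 of \cite{liu2}), a constant $c_3$ with $p^{c_3}\acris\otimes_S\CM\subset\acris\otimes_{W(k)}\tilde M_\st(T)$ (from \cite{liu6}), and the largest $c_4$ with $t^r/p^{c_4}\in\acris$.

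For (3), your justification is a non sequitur: $N_\D=N_S\otimes\t{Id}+\t{Id}\otimes N_D$, and the issue is not whether $N_S$ preserves $S$ but whether $N_\D$ preserves the $S$-lattice $\CM=S\otimes_{\varphi,\gs}\M$, which is not of the form $S\otimes_{W(k)}L_0$ for an $N_D$-stable $W(k)$-lattice $L_0$; likewise ``$N(\M/u\M)$ lands back in $\M/u\M$'' is the statement to be proved, not a proof. Both assertions in (3) are genuine theorems (Propositions 2.4.1 and 2.4.3 of \cite{liu-wd}), proved via the $(\varphi,\hat G)$-module structure, and the hypothesis $p>2$ comes from there, not from $2$-adic denominators of $\gamma_i$. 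Finally, (6) cannot be obtained by ``re-running (1) and (2) with unit denominators'': even when $e=1$, $S$ is the $p$-adically completed divided power envelope and contains $\gamma_i(E(u))$ for all $i$, so it is much larger than $\gs$, and the denominators in the section $s$ come from Frobenius iteration rather than only from $\gamma_i$ with $i\le r$; nothing in your sketch shows $s(\M/u\M)\subset\CM$ integrally. The paper's proof of (6) goes through Fontaine--Laffaille theory: it produces a strongly divisible $W(k)$-lattice $L$ with $T\simeq\Hom_{W(k),\Fil^i,\varphi_i}(L,\acris)$ and $L[\frac 1 p]=D_\cris(V)$, shows that $\CL=S\otimes_{W(k)}L$ and $\CM$ are both quasi-strongly divisible $S$-lattices in $\D$ with isomorphic $T_\cris$, and concludes $\CM\simeq S\otimes_{W(k)}L$; this identification is what forces $M_\st(T)=\tilde M_\st(T)=L$, after which $M_\inv(T)=M_\st(T)$ follows from the $t^r$-argument, with $r\le p-2$ forcing the coefficients into $W(k)$. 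Without this (or an equivalent integral comparison) your claim that $c_1=c_2=0$ in this range is unsupported.
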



\begin{proof} (1)  Write $\tilde M = \tilde M_\st (T)$,  $q : \D \twoheadrightarrow D$ and $M = s \circ q(\CM)$.  It is easy to check that $\M / u\M = q (\CM)$ inside $D$. So it suffices to show that $\tilde M \subset M$.  Note that $s\circ q(x) = x $ for any $x \in s(D). $ Since $\tilde M \subset \CM $, we see that $ \tilde M = s\circ q (\tilde M) \subset s\circ q (\CM) = M.  $ To show the existence of the constant $c_1$ it suffices to show that there exists a constant $c_1$ such that $p ^{c_1} M \subset \CM$ and this has been proved in Lemma 7.3.1. in \cite{liu2}.

(2) We regard $ \D $ as a submodule of $V^\v \otimes_{\Z_p} \acris$ via the  injection $\iota: \acris \otimes_S \D \inj V^\v \otimes_{\Z_p} \acris$ by Theorem \ref{thm: big-review} (2). It is easy to check that $\CM \subset T^\v \otimes_{\Z_p} \acris$ by Theorem \ref{thm: big-review} (3). By the construction of isomorphism $i$ in Diagram \eqref{i}, $s(D) = D_\cris (V)$ if $V$ is crystalline. So we have that $\tilde M_\st (T) \subset M_\inv (T) $. Let $e_1, \dots, e_d $ be a $W(k)$-basis of $\tilde M = \tilde M_\st(T)$. For any $x \in M_\inv (T)$ we may write $x = \sum_i a_i e_i $ with $a_i \in K_0$. By Lemma 5.3.4. in \cite{liu2}, $t^r x \in \acris \otimes_S \CM$. By the proof of Proposition 2.4.1. in \cite{liu6}, we see that there exists a constant $c_3$ depending on $e$ and $r$ such that $p^{c_3}  \acris \otimes_S \CM \subset \acris \otimes _{W(k)} M$. By (1), we may assume that $p^{c_3}  \acris \otimes_S \CM \subset \acris \otimes _{W(k)} \tilde M$. That is, $p^{c_3} t^r x= \sum _i a_i  p^{c_3} t^r e_i \in \acris \otimes _{W(k)} \tilde M$. So $p^{c_3} t ^r a_i \in \acris$. Let $c_4 $ be the largest integer depending on $r$  such that $t^r / p^{c_4} \in \acris$. Then we see that $c_2 = c_3 + c_4$ is the required constant.

(3) These are consequences of Proposition 2.4.3,  and Proposition 2.4.1 in \cite{liu-wd}. Note that  Proposition 2.4.1 requires $p >2$.

(4) and (5) are obvious from the construction. We note that (5) is different from the statement (3) because we change the $N$-structure on $B^+_\st$ by setting  $N(\gv) = 1$.

(6) In this situation, the $G_K$-stable $\Z_p$-lattices can be studied by  Fontaine-Laffaille's theory in \cite{FL82}.  Recall a \emph{strongly divisible $W(k)$-lattice} $(L, \Fil ^i L , \varphi_i)$ is a finite free $W(k)$-module $L$ with the following structures:
\begin{itemize}
 \item A filtration $\Fil ^i L \subset L$ such that $\Fil ^0 L= L$,  $\Fil ^{p-1} L = \{0\}$ and $L/ \Fil ^iL$ is torsion free.
 \item $\varphi_i : \Fil ^i L \to L$ is a Frobenius semi-linear map such that $\varphi _i |_{\Fil ^{i+1 }L} = p \varphi_{{i+1}}$.

 \item  $\sum_{i = 0}^{p-2} \varphi_i (\Fil ^i L) = L.$
\end{itemize}
Since $p ^i | \varphi (\Fil ^i \acris)$ in $\acris$ for $ 0 \leq i \leq p-1$, one can define $\varphi_i := \varphi/ p^i : \Fil ^{i}\acris \to \acris$. By the main result in \cite{FL82}, there exists a strongly divisible $W(k)$-lattice $(L, \Fil ^i L, \varphi_i)$ such that $\Hom_{W(k), \Fil ^i , \varphi_i}  (L, \acris) \simeq T$ and $L [\frac 1 p ]= D_\cris (V)$ as filtered $(\varphi, N)$-modules (we can define $\varphi$ on $L$ by $\varphi = \varphi _0$). 
On the other hand, define $\CL := S \otimes_{W(k)} L$, $\Fil ^{r}\CL: = \sum_{i= 0}^r  \Fil ^i S \otimes \Fil ^{r-i}L$, and a semi-linear map $\varphi_r: = \sum_{i= 0}^r  \varphi_{i, S} \otimes \varphi_{r-i , L}: \Fil ^r \CL \to \CL$, where $\varphi _{i , S} := \varphi_S / p^i : \Fil ^i S \to S$ and $\varphi_{r-i , L}= \varphi_{r-i}: \Fil ^{r-i} L \to L$.
It is easy to check that $(\CL , \Fil ^r \CL , \varphi_r)$ is a \emph{quasi-strongly divisible $S$-lattice}\footnote{Here we use ``$S$-lattices" to distinguish strongly divisible $W(k)$-lattices in Fontaine-Laffaille's theory.} inside $\D$ in the sense in \cite{liu3} (see Definition 2.3.3).  On the other hand, $\CM= \CM_\gs (\M)$ is also a quasi-strongly divisible $S$-lattice inside $\D$, which is the key point in \cite{liu3}, \S 3.4. For any quasi-strongly divisible $S$-lattice $\CN$ inside $\D$, Proposition 3.4.6 {\it{loc. cit.}} show that the functor
$$T_\cris : \CN \mapsto \Hom_{S, \Fil ^r, \varphi_r} (\CN, \acris)$$
establishes an anti-equivalence between the category of quasi-strongly divisible $S$-lattices and the category of $G_\infty$-stable $\Z_p$-lattices inside semi-stable representations with Hodge-Tate weights in $\{0, \dots, r\}$. Now we claim that $T_\cris (\CL) \simeq T_\cris (\CM)$ as $\Z_p [G_\infty]$-modules and consequently $\CL \simeq \CM$. Indeed, it is straightforward to check that $T_\cris (\CL) \simeq \Hom_{W(k), \Fil ^i , \varphi_i} (L , \acris) |_{{G_\infty}}\simeq T|_{G_\infty}$. On the other hand, combing Lemma 3.3.4 in \cite{liu2} and Theorem \ref{review} (3), we see that $T_\cris (\CM) \simeq T_\gs (\M) \simeq \hat T (\hM)|_{G_\infty}= T|_{G_\infty}$. So $T_\cris (\CL) \simeq T_\cris (\CM)$. In summary, there is an $S$-linear isomorphism $\CM\simeq S \otimes_{W(k)} L$ compatible with $\varphi$-structures. Recall that $s : D \to \D$ is unique $\varphi$-equivariant section for the projection $\D \twoheadrightarrow D$. So we conclude  that $s\circ q (\CM) = L = M_\st (T) = \tilde M_\st (T)$.

It remains to show that $M_\inv (T) = M_\st(T)$. The idea is the same as the proof of existence of $c_2$ ($c_2 = 0$ in this case).
Let $e_1, \dots, e_d $ be a $W(k)$-basis of $L  = \tilde M_\st(T)$. For any $x \in M_\inv (T)$ we may write $x = \sum_i a_i e_i $ with $a_i \in K_0$. Lemma 5.3.4. in \cite{liu2} showed that  $t^r x \in \acris \otimes_S \CM= \acris \otimes_{W(k)}L $.  Hence $ t^r x= \sum _i a_i   t^r e_i \in \acris \otimes _{W(k)} L$. So $ t ^r a_i \in \acris$. As $r \leq p-2$, $a_i$ has to be in $W(k)$ to make $a_i t^r \in \acris$.
\end{proof}

\begin{remark} The functor $M_\st$ enjoys some nice properties. For example, it is useful to study torsion representations discussed in \cite{liu-wd} and \cite{liu6}, and it is  compatible with tensor products. But $M_\st$ does not have good exact properties where $M_\inv$ is  left exact. And this is important for \S\ref{subsec: last}.
\end{remark}

\begin{example}\label{ex: 2} Unfortunately, the functor $M_\st$ is not left exact as claimed in Theorem 2.1.3 (the remaining of the theorem is still correct). Indeed,  Example 2.5.6. in \cite{liu-wd} just serves the example that $M_\st$ neither left exact nor right exact. For convenience of the readers, we repeat the example here. Let $K = \Q_p( \pi)$ with $\pi^{p-1} = p.$ Set  $E(u) = u^{p-1}-p.$ Let $\M$ be the  rank-2 Kisin module given by $\varphi (e_1)= e_1$ and $\varphi (e_2)= u e_1 + E (u)e_2$ with $\{e_i\} $ an $\gs $-basis of $\M$. Let $\gs ^*= \gs\cdot \fe$ be the rank-1 Kisin module with $\fe$ the basis and $\varphi (\fe) = E(u)\fe.$ Consider the sequence of Kisin modules
\begin{equation}\label{Eq: example}
0 \to \gs ^* \overset {\mathfrak i} {\to} \M \overset \gf \to \gs \to 0
\end{equation}
 where $\gf$ and $\mathfrak i $ is induced by  $\gf(e_1) = p$ and $\gf(e_2) = u$ and $\mathfrak i (\fe)= u e_1 -p e_2$. It is easy check the sequence is a left exact sequence of Kisin modules with height 1 and the sequence is exact after tensoring $\O_\E$, which is the $p$-adic completion of $\gs [\frac 1 u]$.  As explained above Example 2.5.5 in \cite{liu-wd} and Lemma 2.5.4. {\it loc. cit.}, Theorem (0.4) in \cite{kisin2} implies that the above sequence of Kisin modules can be naturally be extended to sequence of $(\varphi, \hat G )$-modules, and $\hat T$ of the sequence is an \emph{exact} sequence of $G_K$-stable $\Z_p$-lattices in crystalline representations with Hodge-Tate weights in $\{0, 1\}$:
$$ 0 \to \Z_p \to T \to \Z_p (1) \to 0.$$
Now modulo $u$ to the sequence in \eqref{Eq: example}, we get the sequence of $W(k)$-modules:
$$0 \to W(k) \cdot \bar \fe\overset i  \to M  \overset f \to W(k) \to 0 , $$
where  $M = \M/ u \M\simeq M_\st (T)$. We can easily check that the above sequence is not exact on $M$ and $W(k)$. Hence the functor $M_\st$ is not left exact according to the construction of $M_\st$.

\end{example}
\subsection{Various lattices in $D_\dR (V)$} Let $V$ be a De Rham representation of $G_K$ with Hodge-Tate weights in $\{0, \dots, r \}$ and $T$ a $G_K$-stable $\Z_p$-lattice inside $V$. It has been proved that $V$ is potentially semi-stable (\cite{Ber0}). Let us assume that $V$ is semi-stable over $K'$, which is a finite and Galois over $K$. Let $k '$ be the residue field of $K'$ and $K'_0 := W(k')[\frac 1 p]$.


Set $D_\dR (V):  = (V^\v \otimes_{\Q_p} B^+_\dR)^{G_K} $ and $D_{\st, K'}:= ( V^\v  \otimes_{\Z_p}  B_\st ^+)^{G_{K'}}$.    It is well-known that  $ D_{\dR, K' }(V):= (V^\v \otimes_{\Q_p} B^+_\dR )^{G_{K'}} =  K' \otimes _{K'_0} D_{\st, K '} (V) $ and $D_{\st, K'}$ has a semi-linear $\gal(K'/K)$-action.  Let $M_{*, K'} (T) \subset D_{\st , K'} (V)$ denote the lattices $M_\st $, $\tilde M_\st$ and $M_\inv $ constructed in \S \ref{subsec: 3.1} for $T |_{G_{K'}}$. 

We define one more lattice before discussing the properties of $M_{*, K'}$. Let $ ( \M, \varphi, \hat G)$, $\CM \subset \D$ and $D$ denote the data attached to $T|_{G_{K'}}$ as in the beginning of \S \ref{subsec: 3.1}.  Since $\D = S  \otimes_{W(k)} s(D)$ via section $s$ and $D$ is isomorphic to $D_{\st, K'}(V)$ via the isomorphism $i $ in Diagram \eqref{i},  we may identify $\D/ \Fil ^1 S \D $ with $D_{\dR, K '}(V)$. Set $M_{\dR, K'} (T) = \CM / \Fil ^1 S \CM \subset \D / \Fil ^1 S \D \simeq D_{\dR, K '}(V)$ and $M_\dR(T):= D_\dr (V) \cap M_{\dr, K'}(T)$.

The following  proposition  shows that the constructions of $M_{\st, K'}$ and $M_{\dr , K'}$ do not depend on the choice of uniformizer $\pi \in \O_{K'}$. 

\begin{prop}\label{prop: intersection2} Notations as the above, constructions of $M_{\st, K'}$ and $M_{\dr , K'}$ do not depend on the choice of uniformizer $\pi \in \O_{K'}$. If $V$ is potentially crystalline then $\tilde M_{\st , K'}$ and $M_{\inv, K'}$ also do not depend on the choice of uniformizer $\pi \in \O_{K'}$.
\end{prop}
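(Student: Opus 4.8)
The plan is to reproduce the argument of \S\ref{subsec: 3.1} with the base field $K$ replaced by $K'$. This is legitimate because only the uniformizer of $\O_{K'}$ varies, $K'$ itself staying fixed, so Lemma \ref{lem:3.1.1} (together with the Corollary after it), Theorem \ref{thm: main1.5} and Theorem \ref{thm: big-review} all apply to $T|_{G_{K'}}$ and to two uniformizers $\pi,\pi'$ of $\O_{K'}$. First I would fix a $K'_0$-basis $\tilde e_1,\dots,\tilde e_d$ of $D_{\st,K'}(V)$, a $\gs_\upi$-basis $\hat e_1,\dots,\hat e_d$ of $\M$ and a $\gs_{\upi'}$-basis $\hat e'_1,\dots,\hat e'_d$ of $\M'$; the cited results then give the $\acris$-linear, $\varphi$- and $G_{K'}$-equivariant isomorphism $\beta\colon\acris\otimes_{S_\upi}\D\to\acris\otimes_{S_{\upi'}}\D'$ lying over the identity of $V^\v\otimes_{\Z_p}\acris$ as in Diagram \eqref{Dg: 2}, and a matrix $X$ with $(\hat e'_1,\dots,\hat e'_d)=(\hat e_1,\dots,\hat e_d)X$ inside the common module $\acris\otimes_{S_\upi}\D=\acris\otimes_{S_{\upi'}}\D'$, the entries of $X$ and $X^{-1}$ lying in $\tilde\gs_{\upi,\upi'}=W(R)\cap\tilde S_{\upi,\upi'}$, and in $\gs_{\upi,\upi'}$ when $V|_{G_{K'}}$ is crystalline. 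Each of the four lattices will then be written in $\tilde e$-coordinates, and the statement will follow from the integrality $X,X^{-1}\in\t{M}_{d\times d}(W(R))$ together with faithfully flat descent along $W(k')\hookrightarrow W(\bar k)$ or $\O_{K'}\hookrightarrow\widehat\O_{\overline K}$, $\bar k$ denoting the residue field of $\overline K$. For $M_{\dr,K'}$ and $M_\dr$ this is immediate: reducing $\beta$ modulo $\Fil^1$ (i.e.\ applying $\theta$) turns it into the identity of $V^\v\otimes\widehat\O_{\overline K}$, so the identifications $\D/\Fil^1 S_\upi\D\simeq D_{\dr,K'}(V)\simeq\D'/\Fil^1 S_{\upi'}\D'$ are compatible, $M_{\dr,K'}(T)=\bigoplus_i\O_{K'}\overline{\hat e_i}=\bigoplus_i\O_{K'}\overline{\hat e'_i}$ with $\overline{\hat e'_i}=\sum_j\overline{\hat e_j}\,\theta(X_{ji})$ and $\theta(X)\in\GL_d(\widehat\O_{\overline K})$, whence the two $\O_{K'}$-lattices in $D_{\dr,K'}(V)$ coincide after $\otimes_{\O_{K'}}\widehat\O_{\overline K}$ and hence coincide; intersecting with $D_\dr(V)=(V^\v\otimes_{\Q_p}B^+_\dR)^{G_K}$ handles $M_\dr$.

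For $M_{\st,K'}$: writing the Kisin basis in the $s(D)$-basis $e^{(\upi)}_j:=\tilde e_j\bmod\gu$ as $(\hat e_1,\dots,\hat e_d)=(e^{(\upi)}_1,\dots,e^{(\upi)}_d)\,g^{(\upi)}$ with $g^{(\upi)}\in\GL_d(S_\upi[\frac 1 p])$, the construction of $M_\st$ via $q$, $s$ and $i^{-1}$ gives $M_{\st,K'}(T)=\sum_i W(k')\,(\,\sum_j\tilde e_j\,g^{(\upi)}_{ji}(0)\,)$, where $(0)$ is reduction modulo $I_+S_\upi$; similarly for $\pi'$. Combining $(e^{(\upi)}_1,\dots,e^{(\upi)}_d)=(e^{(\upi')}_1,\dots,e^{(\upi')}_d)\,P$ from Equation \eqref{Eq: difference of basis}, where $P:=\sum_{n\ge0}\gamma_n(-\log[\unu])(\bar N)^n\in\GL_d(B^+_\cris)$, with $(\hat e'_i)=(\hat e_i)X$ yields $g^{(\upi')}=P\,g^{(\upi)}\,X$. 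By the universal property of the divided power envelope, the map $W(R)\to W(\bar k)$ extends to a ring homomorphism $\epsilon\colon B^+_\cris\to W(\bar k)[\frac 1 p]$ whose restriction to $S_\upi[\frac 1 p]$ and to $S_{\upi'}[\frac 1 p]$ is the reduction $(0)$; since $\epsilon(\log[\unu])=0$ (as $\epsilon([\unu])$ is the Teichm\"uller lift of a unit of $\bar k$) one has $\epsilon(P)=\t{Id}$, hence $g^{(\upi')}(0')=g^{(\upi)}(0)\,\epsilon(X)$ with $\epsilon(X)\in\GL_d(W(\bar k))$. Therefore the two $W(k')$-lattices $M_{\st,K'}(T)$ in $D_{\st,K'}(V)$ agree after $\otimes_{W(k')}W(\bar k)$, and so they agree.

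When $V|_{G_{K'}}$ is crystalline one has $\bar N=0$, hence $e^{(\upi)}_j=\tilde e_j=e^{(\upi')}_j$, $s(D)=D_{\cris,K'}(V)$ genuinely, $P=\t{Id}$, and $X,X^{-1}\in\t{M}_{d\times d}(\gs_{\upi,\upi'})\subset\t{M}_{d\times d}(W(R))$. Then $\tilde M_{\st,K'}(T)=D_{\cris,K'}(V)\cap\CM$; an $x\in D_{\cris,K'}(V)$ with $\tilde e$-coordinate vector $a\in(K'_0)^d$ lies in $\CM=S_\upi\otimes_{\varphi,\gs_\upi}\M$ exactly when $b:=(g^{(\upi)})^{-1}a\in(S_\upi)^d$, and in $\CM'=S_{\upi'}\otimes_{\varphi,\gs_{\upi'}}\M'$ exactly when $X^{-1}b=(g^{(\upi')})^{-1}a\in(S_{\upi'})^d$; but if $b\in(S_\upi)^d$ then $X^{-1}b\in(\acris)^d$ (because $X^{-1}\in\t{M}_{d\times d}(W(R))$ and $S_\upi\subset\acris$), while $X^{-1}b$ automatically lies in $(S_{\upi'}[\frac 1 p])^d$, whence $X^{-1}b\in(\acris\cap S_{\upi'}[\frac 1 p])^d=(S_{\upi'})^d$; the reverse implication is symmetric, so $\tilde M_{\st,K'}(T)$ does not depend on the uniformizer. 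For $M_{\inv,K'}$ there is nothing to prove: by Proposition \ref{prop: intersection}(2) it equals $(T^\v\otimes_{\Z_p}\acris)^{G_{K'}}$, whose construction involves no uniformizer, and the same holds for $M_\inv$.

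The main obstacle is the $M_{\st,K'}$ step. The section $s$ and the isomorphism $i$ of Diagram \eqref{i} genuinely depend on the uniformizer through $\gu=\log[\upi]$, so $\beta$ does \emph{not} carry $s(D)$ onto $s'(D')$ and one cannot just reduce $X$ ``modulo $u$''; the substitute is to feed in the explicit change of basis \eqref{Eq: difference of basis} and to extend the constant-term map to a ring homomorphism on all of $B^+_\cris$, exploiting $\epsilon(\log[\unu])=0$. Here it matters that $\tilde\gs_{\upi,\upi'}$ is in general not contained in $\hR$ and is strictly larger than the subring generated by $\gs_\upi$ and $\gs_{\upi'}$ (Example \ref{ex:1}), so no coarser reduction of $X$ is available; it is precisely the passage through $B^+_\cris$ and its ``$\bmod I_+$'' homomorphism, rather than through $\hR$, that makes the argument work. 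Once $\beta$ and the integrality of $X$ are in hand, the de Rham and potentially crystalline cases are comparatively routine.
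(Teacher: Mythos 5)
Your proposal is correct and takes essentially the same route as the paper: your $\epsilon$ is the paper's reduction map $\nu$ (from \S 3.2 of \cite{liu-wd}) restricted to $B^+_\cris$, your matrix computation with $X\in \t{GL}_d(W(R))$ and $\epsilon(P)=\t{Id}$ is the coordinate form of the paper's commutative diagrams built on Theorem \ref{main1}, and your descent from $W(\bar k)$ (resp.\ $\widehat{\O}_{\overline K}$) back to $W(k')$ (resp.\ $\O_{K'}$) is exactly what the paper's uniformizer-independent map $\alpha$ (resp.\ $\alpha_K$) accomplishes. The crystalline cases also match, the paper likewise using $s(D)=D_\cris(V)$ and, implicitly, $S_{K_0}\cap\acris=S$ for $\tilde M_{\st,K'}$, with $M_{\inv,K'}$ immediate from its construction.
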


\begin{proof} Since we only use $G_{K'}$-structure in the following proof, without loss of generality, we may assume that $K= K'$. Suppose that we select another uniformizer $\pi' \in \O_{K}$ and the embedding $\gs \subset W(R)$ via $u \mapsto [\upi']$. We add $'$ to all the data for the choice uniformizer $\pi'$ and the embedding $\gs \subset W(R)$ via $u \mapsto [\upi']$. We note that the embedding
$s: D \subset  \D \subset T^\v \otimes_{\Z_p} B^+_\cris$ indeed depends on such choice because the isomorphism $i ^{-1} : s(D) \simeq D_\st (V)$ is given by $y \mapsto \sum \limits^\infty_{n=0} N^i (y) \otimes \gamma_ i (\gu)$, unless $N=0$ or, equivalently, $V$ is crystalline. So we label the isomorphisms $i_{\upi} : D_\st (V) \simeq s(D)$ and $i _{\upi'}: D_\st (V) \simeq s'(D')$ to distinguish them. Recall that $I_+R \subset R$ is the maximal ideal of $R$. Let $\nu: W(R) \to W(\bar k)$ be the projection induced by modulo $W(I_+R)$. \S3.2 in  \cite{liu-wd} showed that $\nu$ can be naturally extended to $\nu: B^+_\st \to \tilde K_0$ such that  $\nu(\gu)= 0$, where $\tilde K_0 = W(\bar k)[\frac 1 p]$. We write $I_+ := \t{Ker} (\nu)$.

From the construction of $M_\st$, we have the following commutative diagram:
$$\xymatrix@C=30pt{ D_\st (V) \ar[d]^{\wr}_i  \ar@{^{(}->}[rr]& & B^+_\st \otimes_S \D\ar@{->>}[d]^{\mod \gu}  \ar@{->>}[r]^-{\mod I_+} &  \tilde K_0 \otimes_{K_0} D_\st (V) \ar@{=}[d]  & \\
s(D) \ar@{^{(}->}[r]^ s  & \D \ar@{^{(}->}[r] & B^+_\cris \otimes_S \D \ar@{->>}[r]^-{\mod I_+} & \tilde K_0 \otimes_{K_0} D \\
 & \M \ar@{^{(}->}[u]  \ar@{^{(}->}[r]& W(R) \otimes_{\varphi, \gs} \M \ar[ur]^\gamma \ar@{^{(}->}[u] \ar@{->>}[r]^-{\mod I_+} & W(\bar k) \otimes _{W(k)} \M/u \M \ar@{^{(}->}[u]}
$$
Here $\gamma$ is just the composite of maps $\xymatrix{W(R)\otimes_{\varphi, \gs} \M  \ar@{->>}[r]^-{\mod I_+} & W(\bar k) \otimes _{W(k)} \M/u\M } $ and $W(\bar k) \otimes _{W(k)} \M/u \M  \inj \tilde K_0 \otimes_{K_0}D$.
Let us write $\alpha$ for  the composite of maps in the first row  of the above diagram. It is obvious that the first row (hence $\alpha$) is independent on the choice of uniformizer $\pi$, while the second and third rows are. The above diagram and the construction of $M_\st (T) = i^{-1} \circ s (\M / u \M)$ shows that $M_\st (T) = (\alpha ^{-1}\circ  \gamma) (W(R) \otimes_{\varphi, \gs} \M) $.

Now we select another uniformizer $\pi' \in \O_{K'}$ and the embedding $\gs \subset W(R)$ via $u \mapsto [\upi']$. We still get the above diagram and $M'_\st (T) = (\alpha ^{-1} \circ \gamma')  (W(R) \otimes_{\varphi , \gs_{\upi'}} \M')$. By Theorem \ref{main1}, we have $W(R) \otimes_{\varphi , \gs_{\upi'}} \M' = W(R) \otimes_{\varphi , \gs_\upi} \M$ as submodules of $T^\v \otimes_{\Z_p} W(R)$. Hence $\gamma ( W(R) \otimes_{\varphi , \gs_\upi} \M ) = \gamma' ( W(R) \otimes_{\varphi , \gs_{\upi'}} \M')$. Since $\alpha$ is independent on the choice of $\pi$, we conclude that
$M_\st (T) = M'_\st(T)$.

 We use the similar idea as the above to show that $M_\dR (T)$ does not depend on the choice of $\pi$. For any subring $B \subset B^+_\dR$, recall that $\Fil ^1 B = \Fil ^1 B_\dR^+ \cap B$. For any ring $A \subset B^+_\st $ such that $W(k) \subset A$, we have a natural map $$\theta:  A \otimes _{W(k)} \O_K \subset B^+_\st \otimes_{K_0} K \subset B^+_\dR \to B^+_\dR/ \Fil ^1 B^+_\dR = \C_p $$ induced by modulo $\Fil ^1$.
  Now according to  the construction of $M_\dR (T)$, we can modify the above diagram as the following:

 $$\xymatrix@C=20pt{ D_\dR  (V) \ar[d]^{\wr}_i  \ar@{^{(}->}[rr]& & K \otimes_{K_0}B^+_\st \otimes_S \D\ar@{->>}[d]^{\mod \gu}  \ar@{->>}[r]^-{\mod \Fil ^1 } &  \C_p \otimes_{K } D_\dR (V) \ar@{=}[d]  & \\
K \otimes_{K_0}s(D) \ar@{^{(}->}[r]^ -{s_K}  & K \otimes_{K_0}\D \ar@{^{(}->}[r] & B^+_\cris \otimes_S (K \otimes_{K_0}\D)  \ar@{->>}[r]^-{\mod \Fil ^1} & \C_p \otimes_{K}(K \otimes _{K_0} D) \\
 & \CM \ar@{^{(}->}[u]  \ar@{^{(}->}[r]& A_\cris \otimes_{S} \CM \ar[ur]^{\gamma_K} \ar@{^{(}->}[u] \ar@{->>}[r]^-{\mod \Fil ^1} &  \O_{\C_p} \otimes _{\O_K} \CM/\Fil ^1 \CM \ar@{^{(}->}[u]}
$$
We see that the map $\alpha_K$ in the first row is still independent on the choice of uniformizer $\pi$ and $M_\dR(T) := (\alpha_K^{-1} \circ \gamma_K) (\acris \otimes_S \CM) $. Now repeat the proof of that $M_\st$ does not depend on the choice of $\pi$, we conclude that $M_\dR (T)$ does not depend on the choice of $\pi$.

When $V$ is crystalline (as we assume that $K' = K$), we see that $s(D) = D_\cris (V)$ which does not depend on choice of $\pi$. It is clear from the construction of $\tilde M_\st$ that $\tilde M_\st (T) = s(D) \cap (\acris \otimes_S \CM)$. Since  $\acris \otimes_{\varphi , \gs_{\upi'}} \M' = \acris \otimes_{\varphi , \gs_\upi} \M$ as submodules of $T^\v \otimes_{\Z_p} W(R)$ by Theorem \ref{main1}, we conclude that $\tilde M_\st$ does not depend on the choice of uniformizer $\pi$. Finally, it is obvious from the construction that $M_\inv (T)$ does not depend on the choice of $\pi$ or $\varpi$ if $V$ is crystalline.

\end{proof}

We wish to discuss the formation of those functors when base changes. Let $K'' /K '$  be a finite extension,  $k''$ the residue field of $K''$ and $\O_{K''}$  the ring of integers.

\begin{prop}\label{prop: basechange} \begin{enumerate}\item $M_{\st , K''} (T) = W(k'') \otimes_{W(k')} M_{\st, K'} (T). $
\item $M_{\dR , K''} (T) = \O_{K''} \otimes_{\O_{K'}} M_{\dR, K'} (T)$.
\end{enumerate}
\end{prop}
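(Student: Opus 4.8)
The plan is to factor $K''/K'$ through its maximal unramified subextension $K'\subseteq L\subseteq K''$ (so $L$ has residue field $k''$, $\O_L = W(k'')\otimes_{W(k')}\O_{K'}$, $L_0 = K''_0$, and $K''/L$ is totally ramified) and to prove the two assertions separately for the unramified extension $L/K'$ and the totally ramified extension $K''/L$; composing these gives the general case, since (1) for $L/K'$ together with (1) for $K''/L$ yields (1) for $K''/K'$, and similarly for (2). By Proposition \ref{prop: intersection2} I am free to choose the uniformizers of $\O_L$ and $\O_{K''}$ conveniently.

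For the unramified step I would take the same uniformizer and compatible system, $\pi''=\pi'$ and $\upi''=\upi'$, so that $\gs_{\upi'}\inj W(R)$ is the restriction of $\gs_{\upi''}\inj W(R)$, the Eisenstein polynomial $E(u)$ is unchanged, and $S'' = W(k'')\otimes_{W(k')}S'$ with $\Fil^i S'' = W(k'')\otimes_{W(k')}\Fil^i S'$. The one substantive input is $\M''\simeq W(k'')\otimes_{W(k')}\M'$ as Kisin modules over $\gs''=W(k'')\llbracket u\rrbracket$: the right-hand side is plainly a Kisin module of height $r$, and since every $\varphi$-equivariant $\gs'$-linear map $\M'\to W(R)$ extends uniquely $W(k'')$-linearly, one gets $T_\gs(W(k'')\otimes_{W(k')}\M')\simeq T_\gs(\M')|_{G''_\infty}\simeq T|_{G''_\infty}$ (using $K''_\infty = K''\cdot K'_\infty$), so the uniqueness in Theorem \ref{review} forces the identification. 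It follows that $\D''$, $D''$, the canonical $(\varphi,N)$-section $s''$ and the isomorphism $i''$ of Diagram \eqref{i} (computed with the unchanged $\gu=\log[\upi']$), and $\CM''$ together with $\Fil^1 S''\CM''$, are all obtained by applying $W(k'')\otimes_{W(k')}(-)$ to their $K'$-counterparts. From $M_{\st}(T)=(i^{-1}\circ s)(\M/u\M)$ and $M_{\dR}(T)=\CM/\Fil^1 S\CM$, both (1) and (2) for $L/K'$ then drop out, using $D_{\st,L}(V)=W(k'')\otimes_{W(k')}D_{\st,K'}(V)$ and $D_{\dR,L}(V)=W(k'')\otimes_{W(k')}D_{\dR,K'}(V)$.

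For the totally ramified step one has $W(k'')=W(k')$, $K''_0=K'_0$, hence $D_{\st,K''}(V)=D_{\st,K'}(V)$ and $D_{\dR,K''}(V)=K''\otimes_{K'}D_{\dR,K'}(V)$, so the assertions read $M_{\st,K''}(T)=M_{\st,K'}(T)$ and $M_{\dR,K''}(T)=\O_{K''}\otimes_{\O_{K'}}M_{\dR,K'}(T)$. Here I would rerun the argument of the proof of Proposition \ref{prop: intersection2}, using the identities $M_{\st}(T)=(\alpha^{-1}\circ\gamma)(W(R)\otimes_{\varphi,\gs}\M)$ and $M_{\dR}(T)=(\alpha_K^{-1}\circ\gamma_K)(\acris\otimes_S\CM)$ recorded there: the maps $\alpha$ and $\alpha_K$ involve only $D_{\st,K'}(V)$, resp. $D_{\dR,K'}(V)$, and the period rings, hence do not change on passing from $K'$ to $K''$ (for $\alpha_K$, after the canonical identification $\C_p\otimes_{K'}D_{\dR,K'}(V)=\C_p\otimes_{K''}D_{\dR,K''}(V)$, the map $\alpha_{K''}$ is the $K''$-linear extension of $\alpha_{K'}$). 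By Theorem \ref{thm: main1.6}, $W(R)\otimes_{\varphi,\gs_{\upi'}}\M'=W(R)\otimes_{\varphi,\gs_{\upi''}}\M''$ inside $T^\v\otimes_{\Z_p}W(R)$, and tensoring up (using $\acris\otimes_{S_\upi}\CM=\acris\otimes_{\varphi,\gs_\upi}\M$) gives $\acris\otimes_{S_{\upi'}}\CM'=\acris\otimes_{S_{\upi''}}\CM''$ inside $V^\v\otimes_{\Z_p}\acris$. Since $\gamma$ and $\gamma_K$ are reductions modulo $I_+$, resp. modulo $\Fil^1$, on these modules, their images coincide; applying $\alpha^{-1}$ gives $M_{\st,K''}(T)=M_{\st,K'}(T)$, and applying $\alpha_K^{-1}$ gives $M_{\dR,K''}(T)=\O_{K''}\otimes_{\O_{K'}}M_{\dR,K'}(T)$ provided one knows the common image of $\gamma_K$ is the $\O_{\C_p}$-lattice $\O_{\C_p}\otimes_{\O_{K'}}M_{\dR,K'}(T)$.

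The step I expect to be the main obstacle is precisely this last point in the ramified step for $M_{\dR}$: the inclusion $\O_{K''}\otimes_{\O_{K'}}M_{\dR,K'}(T)\subseteq M_{\dR,K''}(T)$ is formal, but the reverse inclusion requires an $\O_{K'}$-basis of $M_{\dR,K'}(T)$ to be carried by $\alpha_{K'}$ to an $\O_{\C_p}$-basis of $\gamma_K(\acris\otimes_S\CM')$ — equivalently, that the change-of-basis matrix $X$ of the Corollary following Lemma \ref{lem:3.1.1}, which Theorem \ref{thm: main1.6} places in $W(R)$ along with $X^{-1}$, takes $\acris\otimes_S\CM'$ onto $\acris\otimes_S\CM''$ compatibly with the $\Fil^1$-reductions. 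The only other point needing care is the unramified-base-change identity $\M''\simeq W(k'')\otimes_{W(k')}\M'$, which I expect to cause no trouble beyond the Galois-representation bookkeeping indicated above.
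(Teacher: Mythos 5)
Your part (1) is fine, and the engine you use (the two diagrams from the proof of Proposition \ref{prop: intersection2} together with Theorem \ref{thm: main1.6}) is exactly the paper's; but as written part (2) stops at the point you flag, and that point is the actual crux of the proposition, namely the reverse inclusion. It is, however, not an obstacle, and your reformulation via the matrix $X$ is misdirected: once Theorem \ref{thm: main1.6} makes $\acris \otimes_{S_{\upi'}} \CM'$ and $\acris \otimes_{S_{\upi''}} \CM''$ literally \emph{equal} as submodules of $T^\v \otimes_{\Z_p} \acris$, the $\Fil^1$-reduction is just $\t{Id}\otimes \theta$ applied to one and the same module, so there is no ``compatibility with the $\Fil^1$-reductions'' left to check; what you need is only to compute the image, and that is supplied by the very diagram you are citing. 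Indeed $\acris \otimes_{S_{\upi'}} \CM'$ is generated over $\acris$ by $1\otimes \CM'$, so the image of $\gamma_K$ on it is the $\O_{\C_p}$-span of the image of $\CM'$, which by the (surjective) bottom row of the second diagram in the proof of Proposition \ref{prop: intersection2} is $\O_{\C_p}\otimes_{\O_{K'}}\bigl(\CM'/\Fil^1 S\,\CM'\bigr) = \O_{\C_p}\otimes_{\O_{K'}} M_{\dR, K'}(T)$. Running the same computation for $K''$ gives $\O_{\C_p}\otimes_{\O_{K'}} M_{\dR, K'}(T) = \O_{\C_p}\otimes_{\O_{K''}} M_{\dR, K''}(T)$ inside $\C_p \otimes_{K''} D_{\dR, K''}(V)$, and intersecting with $D_{\dR, K''}(V)$ (equivalently, descending along the faithfully flat map $\O_{K''}\to \O_{\C_p}$, using $K''\cap \O_{\C_p} = \O_{K''}$) yields $M_{\dR, K''}(T) = \O_{K''}\otimes_{\O_{K'}} M_{\dR, K'}(T)$.

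This is in fact how the paper argues, and it also shows your unramified/totally-ramified decomposition is unnecessary: Theorem \ref{thm: main1.6} is stated for an arbitrary finite extension (with $\upi' = \unu\,(\upi'')^m$), so the diagram argument applies to $K''/K'$ in one stroke. For (1) the paper gets the inclusion $W(k'')\otimes_{W(k')} M_{\st, K'}(T) \subset M_{\st, K''}(T)$ from $M_{\st, K'}(T) = M_{\st, K''}(T)\cap D_{\st, K'}(V)$, and the reverse inclusion from the mod-$W(I_+R)$ analogue of the computation above, namely $W(\bar k)\otimes_{W(k')}\M'/u\M' = W(\bar k)\otimes_{W(k'')}\M''/u\M''$, followed by descent from $W(\bar k)$ to $W(k'')$; part (2) is then ``totally similar'' with $\Fil^1$ and $\O_{\C_p}$ in place of $W(I_+R)$ and $W(\bar k)$. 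Your unramified step ($\M$ over $L$ being $W(k'')\otimes_{W(k')}\M'$, via Kisin's uniqueness) is correct in outline but is extra machinery the paper's proof never needs.
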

\begin{proof} To prove (1) and (2), we use almost the same ideas as the proof of the above proposition. By the first commutative diagram in the above proof,  $M_{\st, K''}  (T)  = (\alpha^{-1}\circ \gamma ) (W(R) \otimes_{\varphi , \gs} \M'') \subset D_{\st , K''}(V)$. By Theorem \ref{thm: main1.6},
$W(R) \otimes_{\varphi , \gs_{\upi''}} \M'' = W(R) \otimes_{\varphi , \gs_{\upi'}} \M'$  as submodules of $T^\v \otimes_{\Z_p} W(R)$. Hence we have   $M_{\st , K '} (T) $ is just $(\alpha^{-1}\circ \gamma ) (W(R) \otimes_{\varphi , \gs} \M'')$ restricted to $D_{\st, K'}(V)$. That is, $M_{\st , K'} (T) = M_{\st, K''} (T) \cap {D_{\st, K'}(V)}$. Since $M_{\st, K' }(T)$ is a $W(k')$-lattice inside  $D_{\st, K'} (V)$ and $D_{\st , K''} (V) = W(k'') \otimes_{W(k')} D_{\st, K'} (V)$, we get $ W(k'') \otimes_{W(k')} M_{\st, K'} (T)\subset M_{\st , K''} (T)$. But
\begin{eqnarray*}
W(\bar k) \otimes_{W(k')}\M'/ [\upi'] \M'   =  W(R) \otimes_{\varphi, \gs_{\pi'}} \M' \mod W(I_+ R) \\ =   W(R) \otimes_{\varphi, \gs_{\pi''}} \M'' \mod W(I_+R) =W(\bar k) \otimes_{W(k'')}\M''/ [\upi''] \M''.
\end{eqnarray*}
Hence $M_{\st, K'}(T)$ must generate $M_{\st, K''}(T)$ as $W(k'')$-modules and then we conclude $ W(k'') \otimes_{W(k')} M_{\st, K'} (T)= M_{\st , K''} (T)$. The proof of (2) proceeds totally similarly.
\end{proof}



If $V$ is semi-stable non-crystalline then $M_{\inv, K}$ in general does depend on the choice of $\varpi$. To study de Rham representations  of $G_K$ in the next subsection,   we fix a uniformizer $\varpi$ of $\O_K$ to define $A_\st$ from now. Set
$$\tilde M_\inv (T) := D_\dr(V) \cap (\O_{K'} \otimes_{ W(k')} M_{\inv, K'}(T)) = ( \O_{K'} \otimes_{W(k')} M_{\inv, K'}(T))^{\gal (K'/K)}. $$
It is easy to see that $\tilde M_\inv (T) $ is an $\O_K$-lattice in $D_\dr(V)$ but lost the $(\varphi, N)$-action. The following lemma summarizes the useful properties of $\tilde M_\inv$.
\begin{lemma} \label{lem: property of inv}\begin{enumerate}
\item Let $K''/K'$ be a finite extension  with the residue field $k''$. Then $M_{\inv , K''} (T) = W(k'') \otimes_{W(k')} M_{\inv, K'} (T)$.
\item The construction of $\tilde M_\inv$ does not depend on the choice of $K'$.
\item The functor $\tilde M_\inv $ is left exact.
\item Assume that $T_1^\v  \inj T_2 ^\v$ is an injection of two $G_K$-stable $\Z_p$-lattices of De Rham representations. If $p ^a$ kills the torsion part of $T^\v_2/ T^\v_1$ then $p^a$ kills the torsion part of $\tilde M_\inv (T_2)/ \tilde M_\inv (T_1)$.

\end{enumerate}
\end{lemma}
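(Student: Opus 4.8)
The plan is to deduce everything from assertion (1), which I would prove by pure Galois descent (no Kisin modules needed), and then to handle (2)--(4) as linear-algebra arguments built on (1) together with the $\Z_p$-flatness of $A^+_\st$ and the exactness of $D_\dr$ on de Rham representations with Hodge--Tate weights in $\{0,\dots,r\}$. For (1), I would factor $K''/K'$ into a totally ramified extension followed by an unramified one. If $K''/K'$ is totally ramified then $k'' = k'$ and $K''_0 = K'_0$, so $D_{\st,K''}(V) = D_{\st,K'}(V)$ (same field, same dimension $\dim_{\Q_p} V$, one contained in the other); since every element of $M_{\inv,K''}(T) = (T^\v\otimes_{\Z_p}A^+_\st)^{G_{K''}}$ then already lies in $D_{\st,K'}(V)$, the general identity $M_{\inv,K'}(T) = M_{\inv,K''}(T)\cap D_{\st,K'}(V)$ — immediate from the description of $M_\inv$ as Galois invariants, and valid for any $K''\supseteq K'$ — forces $M_{\inv,K''}(T) = M_{\inv,K'}(T)$, so the claim is trivial here. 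If $K''/K'$ is unramified and Galois with group $\Gamma = G_{K'}/G_{K''}$, then $M_{\inv,K''}(T)$ is a finite free $W(k'')$-module carrying a $\Gamma$-semilinear action whose invariants are $(T^\v\otimes_{\Z_p}A^+_\st)^{G_{K'}} = M_{\inv,K'}(T)$; since $W(k'')/W(k')$ is finite étale Galois with group $\Gamma$, étale descent gives $M_{\inv,K''}(T) = W(k'')\otimes_{W(k')}M_{\inv,K'}(T)$. The general unramified case follows by passing to the Galois closure $\widetilde{K''}$ of $K''/K'$ and taking $\gal(\widetilde{K''}/K'')$-invariants of $M_{\inv,\widetilde{K''}}(T) = W(\widetilde{k''})\otimes_{W(k')}M_{\inv,K'}(T)$, on which that group acts only through the first factor. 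This also pins down the identification $D_{\st,K''}(V)\simeq W(k'')\otimes_{W(k')}D_{\st,K'}(V)$ as the descent (Frobenius-twisted) one, which is the identification under which the statement is to be read and the one used in Proposition \ref{prop: basechange}.

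Assertion (2) is then formal. Given two fields over which $V$ is semi-stable, I would replace them by their compositum $K''$, use (1) to rewrite $\O_{K''}\otimes_{W(k'')}M_{\inv,K''}(T)$ as $\O_{K''}\otimes_{\O_{K'}}L$ with $L = \O_{K'}\otimes_{W(k')}M_{\inv,K'}(T)$ an $\O_{K'}$-lattice in $D_{\dR,K'}(V)$, and then observe, writing $\O_{K''}$ as a free $\O_{K'}$-module with $1$ in a basis and using that $D_{\dR,K'}(V)$ sits inside $D_{\dR,K''}(V) = K''\otimes_{K'}D_{\dR,K'}(V)$ as the ``$1$''-component (the base change of $D_\dR$ being the naive one, since $G_{K''}$ fixes $K''\subset B^+_\dR$), that $D_{\dR,K'}(V)\cap(\O_{K''}\otimes_{\O_{K'}}L) = L$. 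Intersecting with $D_\dr(V)\subseteq D_{\dR,K'}(V)$ shows that $\tilde M_\inv(T)$ computed through $K''$ agrees with the one computed through $K'$, hence with the one through the other field.

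For (3), I would dualize a short exact sequence $0\to T_1\to T_2\to T_3\to 0$ of lattices in de Rham representations to $0\to T_3^\v\to T_2^\v\to T_1^\v\to 0$, pick (legitimately, by (2)) one $K'$ over which all $V_i$ become semi-stable, and use that $(-)\otimes_{\Z_p}A^+_\st$ is exact and $(-)^{G_{K'}}$ left exact, so that $0\to M_{\inv,K'}(T_3)\to M_{\inv,K'}(T_2)\to M_{\inv,K'}(T_1)$ is exact and remains so after tensoring with the flat $W(k')$-algebra $\O_{K'}$. Combining this with the exactness of $D_\dr$ (the usual dimension count, de Rham being closed under duality), which gives $0\to D_\dr(V_3)\to D_\dr(V_2)\to D_\dr(V_1)\to 0$, a diagram chase inside the $D_{\dR,K'}(V_i)$ — using the intersection defining $\tilde M_\inv$ and the injectivity of $D_{\dR,K'}(V_3)\hookrightarrow D_{\dR,K'}(V_2)$ — yields exactness of $0\to\tilde M_\inv(T_3)\to\tilde M_\inv(T_2)\to\tilde M_\inv(T_1)$; this refines Proposition \ref{prop: intersection}(5).

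Finally, for (4), I would fix $K'$ over which $V_1,V_2$ are semi-stable, set $A^+_{\st,K'} := A^+_\st\otimes_{W(k')}\O_{K'}$ (so $\O_{K'}\otimes_{W(k')}M_{\inv,K'}(T_i) = (T_i^\v\otimes_{\Z_p}A^+_{\st,K'})^{G_{K'}}$, since $G_{K'}$ fixes $\O_{K'}$), and argue: the torsion submodule of $\tilde M_\inv(T_2)/\tilde M_\inv(T_1)$ equals $(\tilde M_\inv(T_2)\cap D_\dr(V_1))/\tilde M_\inv(T_1)$; splitting $T_2^\v = (T_2^\v\cap V_1^\v)\oplus C$ over $\Z_p$ gives $\tilde M_\inv(T_2)\cap D_\dr(V_1) = D_\dr(V_1)\cap\big((T_2^\v\cap V_1^\v)\otimes_{\Z_p}A^+_{\st,K'}\big)^{G_{K'}}$ inside $D_{\dR,K'}(V_2)$; and the hypothesis is exactly $p^a(T_2^\v\cap V_1^\v)\subseteq T_1^\v$, whence $p^a(\tilde M_\inv(T_2)\cap D_\dr(V_1))\subseteq D_\dr(V_1)\cap(T_1^\v\otimes_{\Z_p}A^+_{\st,K'})^{G_{K'}} = \tilde M_\inv(T_1)$. \textbf{The least routine step} is the bookkeeping in (1): one must make sure the twist in the unramified base change of $D_\st$ is the one implicit in the definition of $\tilde M_\inv$ and in Proposition \ref{prop: basechange}, so that it disappears on passing to $D_\dr$ and to the $\O$-lattices and the intersection defining $\tilde M_\inv$ is unambiguous; once that is settled, (2)--(4) are straightforward.
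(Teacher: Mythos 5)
Your proposal is correct in substance, and its backbone -- \'etale (Galois) descent for (1), followed by formal lattice arguments for (2)--(4) -- is the same as the paper's. One slip in (1): you propose to factor $K''/K'$ as a totally ramified extension followed by an unramified one, but such a tower need not exist in general; the canonical factorization has the maximal unramified subextension at the bottom, so the correct order is unramified first, then totally ramified. Since your two steps are handled independently and each works over any base over which $V$ is semi-stable, this is harmless -- just swap the order. (Also, unramified extensions of local fields are automatically Galois, so no passage to a Galois closure is needed there.) The paper instead reduces at once to the Galois case and observes that the $\gal(K''/K')$-action on $M_{\inv,K''}(T)[\frac 1 p]\simeq K''_0\otimes_{K'_0}M_{\inv,K'}(T)[\frac 1 p]$ factors through $\gal(K''_0/K'_0)$, then applies \'etale descent; this is the same content as your unramified-plus-totally-ramified splitting. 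For (2)--(4) your mechanics differ mildly from the paper's: you work throughout with the intersection description $\tilde M_\inv(T)=D_\dr(V)\cap(\O_{K'}\otimes_{W(k')}M_{\inv,K'}(T))$, invoking for (3) the exactness of $D_\dr$ on de Rham representations together with the injectivity of $D_{\dR,K'}(V_3)\inj D_{\dR,K'}(V_2)$, and for (4) the saturation $T_2^\v\cap V_1^\v$; the paper stays with the $\gal(K'/K)$-invariants description, proving (3) by splitting the left-exact sequence $0\to M''\to M\to M'$ of $M_{\inv,K'}$-modules through its image $N$, tensoring with the flat $\O_{K'}$, taking invariants and comparing kernels, and proving (4) by first reducing, via left exactness, to the case where $T_2^\v/T_1^\v$ itself is killed by $p^a$. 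Both routes are valid: yours leans on the standard exactness of $D_\dR$ (which the paper's argument avoids), while the paper's is slightly more self-contained. Like the paper, you implicitly use that $M_{\inv,K'}(T)$ is a finite free $W(k')$-lattice in $D_{\st,K'}(V)$ (and $\tilde M_\inv(T)$ an $\O_K$-lattice in $D_\dr(V)$); that finiteness is what legitimizes the \'etale descent in (1) and the torsion bookkeeping in (4), and it is taken for granted in the paper as well.
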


\begin{proof} (1) It is clear that  $M_{\inv, K ''} (T)[\frac 1 p] \simeq W(k'') \otimes_{W(k')} M_{\inv, K'}(T) [\frac 1 p]$. So we have
$ W(k'') \otimes_{W(k')} M_{\inv, K'}(T) \subset M_{\inv, K'' }(T)$  as lattices inside $M _{\inv, K''}(T)[\frac 1 p]$. Hence to prove (1) it suffices to assume that $K''/ K'$ is Galois. Write $D'':= M_{\inv, K''} (T) [\frac 1 p]$, $D' = M_{\inv, K'} (T) [\frac 1 p]$, $K''_0 = W(k'')[\frac 1 p]$ and $K'_0 = W(k')[\frac 1 p]$. We see that $D''$ has a semi-linear $\gal(K''/K')$-action and $D'' \simeq K''_0 \otimes _{K'_0}D'$ as $\gal (K''/ K')$-modules, where $\gal (K''/K')$  acts on $D'$ trivially.  It is obvious that the $\gal(K''/K')$-action factors through $\Gamma:= \gal (K''_0 /K'_0)$, $M_{\inv, K''}(T)\subset D''$ is $\Gamma$-stable and $M_{\inv, K'}(T) = M_{\inv, K''}(T)^{\Gamma} $. Then $M_{\inv, K''}(T) = W(k'') \otimes _{W(k')} M_{\inv, K'}(T)$ by the \'etale descent.

(2) Suppose that $K''$ is another Galois extension of $K$ such that  $V$ is semi-stable over $K''$. Let $k''$ be the residue field of $K''$.   We need to show that
\begin{equation}\label{Eq: invariants}
( \O_{K'} \otimes_{W(k')} M_{\inv, K'}(T))^{\gal (K'/K)} = ( \O_{K''} \otimes_{W(k'')} M_{\inv, K''}(T))^{\gal (K''/K)}.
\end{equation}
Without loss of generality, we can assume that $K ' \subset K''$. As $\gal (K'' / K)$-module, (1) shows that
\[ \O_{K''} \otimes_{W(k'')} M_{\inv, K''}(T) \simeq  \O_{K''} \otimes_{\O_{K'}}( \O_{K'} \otimes_{W(k')} M_{\inv, K'}(T) ).\]
Note that  $\gal (K''/ K ')$ acts on $( \O_{K'} \otimes_{W(k')} M_{\inv, K'}(T) )$ trivially, we obtain
\begin{eqnarray*}
(\O_{K''} \otimes_{W(k'')} M_{\inv, K''}(T))^{\gal (K''/K' )} &= & (\O_{K''}) ^{\gal(K''/K ')} \otimes_{W(k')} M_{\inv, K'}(T)  \\
&= & \O_{K'} \otimes_{W(k')} M_{\inv, K'}(T).
\end{eqnarray*}
Then Equation \eqref{Eq: invariants} follows by taking $\gal(K'/K)$-invariants by the both sides of the above equation.

(3) Suppose that we are given an  exact sequence $0 \to T'  \to T \to T'' \to 0$.  Applying functor $M_{\inv, K'}$, we obtain a left exact sequence
$0 \to M ''  \to M  \overset f  \to M' $ by Proposition \ref{prop: intersection} (5). We can decompose the above  sequence by two sequences
$ 0 \to M'' \to M \overset g \to N \to 0$ and $ N \overset i \inj M'$ such that the first sequence is exact. We note that the $N$ is a  finite free $W(k')$-modules as $N$ is a submodule of $M'$. In the following, we denote $A_{K'} := \O_{K'} \otimes_{W(k')} A$ for a $W(k')$-module $A$.
Since $\O_{K'}$ is flat over $W(k')$,  we still get the exact sequence
$ 0 \to M''_{K'} \to M_{K'} \overset {g_{K'}}{\to} N_{K'} \to 0$  and  $N_{K'} \overset{i_{K'}}{\inj} M'_{K'}$. Taking $\gal (K' /K)$-invariants, we obtained   a left exact sequence $ 0 \to \tilde M_{\inv} (T'') \to \tilde M_{\inv} (T) \overset{ g_{K'}}{ \to}  (N_{K'})^{\gal (K' /K)} $ and $(N_{K'})^{\gal (K' /K)}  \inj \tilde M_\inv(T'). $  Write $f_{K'}: \tilde M_\inv (T) \to \tilde M_\inv (T')$. We can easily check that  $\t{Ker} (f_{K'}) = \t{Ker} (g_{K'})$. Hence the sequence  $0 \to \tilde M_{\inv} (T'') \to \tilde M_{\inv} (T)  \to \tilde M_\inv (T')$ is left exact.

 (4) Since $\tilde M_\inv $ is left exact, without loss of generality, we can assume that $T_2^\v/ T_1 ^\v  $  is killed by $p^a$. It is obvious from the construction of $M_{\inv , K'}$ to see that $ M_{\inv, K' } (T_2)/  M_{\inv, K'} (T_1) $ is killed by $p^a$. After tensoring $\O_{K'}$ and taking Galois invariants, it is trivial to check that $p^a $ kills  $ \tilde M_{\inv} (T_2)/ \tilde M_{\inv} (T_1) $.

\end{proof}

It is easy to see that $\tilde M_\inv (T) \subset (T^\v \otimes_{\Z_p} A_\st \otimes_{W(\bar k)} \O_{\overline K})^{G_{K}}$. But we do not know how to prove that $\tilde M_\inv (T) = (T^\v \otimes_{\Z_p} A_\st \otimes_{W(\bar k)} \O_{\overline K})^{G_{K}}$.

\subsection{The direct limit of de Rham representations}\label{subsec: last}
Let $I$ be a partial order set and  $\{L_i\}_{i \in I}$ be a family of $G_K$-stable  $\Z_p$-lattices  in de Rham representations $V_i$ of $G_K$ with Hodge-Tate weights in $\{-r , \dots , 0 \}$ ($r$ is independent on $i$). Let   $L := \varinjlim_{i \in I} L_i$ be the direct limit.  Fix a uniformizer $\varpi \in \O_K$ as the last subsection to define $A_\st$ and $\tilde M_\inv$. We define a covariant version of $\tilde M_\inv $ via $\tilde M^*_\inv (T):  =  \tilde M_\inv (T ^\v)$.   Set $M_i = \tilde M^*_\inv  (L_i )$ and $ M: =\varinjlim _{i \in I} M_i$.

Recall that $L$ is $p$-adically  separated if $L$ injects to the $p$-adic limits $\hat L  $ of $L$, or equivalently, $\bigcap\limits_{n =1}^\infty p^n L = \{0\}$.

\begin{prop}\label{prop: directlimit} If $L$is $p$-adically separated then $M$ is $p$-adically separated.
\end{prop}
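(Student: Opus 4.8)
The plan is to exploit the left exactness of $\tilde M_\inv$ (hence of $\tilde M^*_\inv$) together with the quantitative torsion control in Lemma \ref{lem: property of inv} (4). The key point is that the functors $M_i = \tilde M^*_\inv(L_i)$ are \emph{uniformly} well-behaved: although they are only left exact, the torsion in the transition maps is controlled by the torsion in the transition maps of the $L_i$. Concretely, for $i \leq j$ in $I$ the map $L_i \to L_j$ dualizes to an injection $L_j^\v \inj L_i^\v$ (after identifying with images in $L^\v$, or more carefully working with the induced maps), and Lemma \ref{lem: property of inv} (4) tells us that the torsion part of $\tilde M_\inv(L_i^\v)/\tilde M_\inv(L_j^\v)$ is killed by the same power of $p$ that kills the torsion part of the corresponding cokernel of lattices. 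So I first want to reduce to checking $p$-adic separatedness on the level of the $M_i$ with uniform bounds, and then pass to the limit.

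First I would set up the following. Fix $m \in M = \varinjlim M_i$ with $m \in \bigcap_{n\geq 1} p^n M$; I want to show $m = 0$. Choose $i$ and $m_i \in M_i$ representing $m$. For each $n$ there is $j_n \geq i$ and $m_{j_n}' \in M_{j_n}$ with (image of $m_i$ in $M_{j_n}$) $= p^n m_{j_n}'$. The obstacle is that $M_{j_n}$ is a lattice in the fixed finite-dimensional space $D_\dR(V_{j_n})$, whose dimension and the field $K'$ over which $V_{j_n}$ becomes semistable may grow with $n$; so I cannot directly use that a single finitely generated module is $p$-adically separated. Instead I would use the hypothesis that $L$ is $p$-adically separated to constrain $m_i$: the natural map $M_i = \tilde M^*_\inv(L_i) \to (\text{something involving } L_i)$ realizes $M_i$ inside $L_i \otimes_{\Z_p}(\text{a suitable period ring})$, compatibly in $i$, by the inclusion $\tilde M_\inv(T) \subset (T^\v \otimes_{\Z_p} A_\st \otimes_{W(\bar k)}\O_{\overline K})^{G_K}$ noted at the end of \S4.2. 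Thus $M = \varinjlim M_i$ embeds into $L \otimes_{\Z_p}(A_\st \otimes_{W(\bar k)}\O_{\overline K})$ (using that the period ring is $p$-torsion free and $\varinjlim$ is exact), and the image of an element divisible by $p^n$ for all $n$ lands in $\bigcap_n p^n \big(L \otimes_{\Z_p}(A_\st \otimes \O_{\overline K})\big)$.

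The main obstacle, and the crux of the argument, is therefore to show that $\bigcap_n p^n \big(L\otimes_{\Z_p} C\big) = 0$ where $C := A_\st \otimes_{W(\bar k)}\O_{\overline K}$, given that $L$ is $p$-adically separated. This does \emph{not} hold for arbitrary $C$, so I would use that $C$ is $p$-torsion free and $p$-adically separated (which holds for $A_\st$ and $\O_{\overline K}$, hence for the tensor product after checking flatness/separatedness — $\O_{\overline K}$ is $p$-torsion free and $p$-adically separated, and $A_\st = \acris[\gv]$ is $p$-torsion free), together with the fact that $L$ is a filtered colimit of finite free $\Z_p$-modules $L_i$. For each $i$, $L_i \otimes_{\Z_p} C$ is $p$-adically separated since $L_i$ is free; the subtlety is that $\bigcap_n p^n(-)$ need not commute with $\varinjlim$. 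Here I would argue directly: an element of $L\otimes_{\Z_p}C$ divisible by $p^n$ for all $n$ comes from some $L_i \otimes C$; using that $L$ is $p$-adically separated, its image in $L_i/p^n L_i$ becomes, for $j$ large, divisible by $p^n$ inside $L_j/p^nL_j \otimes (C/p^nC)$, and the compatibility forces it into $p^n(L_j\otimes C) + (\ker(L_i\otimes C \to L_j \otimes C))$; taking a further colimit and using $p$-adic separatedness of $L$ itself to kill the kernel contributions in the limit, one concludes the element is $0$. I would write this last step carefully as the core lemma, then assemble: $m$ maps to $0$ in $L\otimes_{\Z_p}C$, and since $M \inj L\otimes_{\Z_p}C$, we get $m=0$, proving $M$ is $p$-adically separated.
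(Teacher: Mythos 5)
The reduction you set up — embedding each $M_i$ into $(L_i\otimes_{\Z_p}C)^{G_K}$ with $C=A^+_\st\otimes_{W(\bar k)}\O_{\overline K}$ and passing to the colimit $M\inj L\otimes_{\Z_p}C$ — is plausible, but it shifts the entire weight of the proof onto your ``core lemma'' that $\bigcap_n p^n\bigl(L\otimes_{\Z_p}C\bigr)=0$ whenever $\bigcap_n p^nL=0$, and that is precisely where the gap is. This is not a formal consequence of $L$ being separated and $C$ being $p$-torsion free or even $p$-adically separated: after tensoring with a huge ring new divisibilities appear. For an element $z=\sum_k x_k\otimes\alpha_k\in L_j\otimes C$ whose coefficients $\alpha_k$ do not lie in $\Z_p$, the condition $z\in p^n(L_j\otimes C)$ is \emph{not} detected inside $L_j$ (only elements with $\Z_p$-coefficients have that property, via $C\cap\Q_p=\Z_p$), so the separatedness of $L$ gives no direct control over such $z$. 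Excluding them requires genuine input about the specific ring $C$ — for instance that every finite-dimensional $\Q_p$-subspace $W\subset C[\frac 1 p]$ meets $C$ in a bounded (hence finitely generated) $\Z_p$-module, which one can hope to prove for a valuation ring like $\O_{\C_p}$ but which needs a real argument for $A^+_\st\otimes_{W(\bar k)}\O_{\overline K}$; even the $p$-adic separatedness of this tensor product is not free, and the paper deliberately claims nothing about $(T^\v\otimes_{\Z_p}A^+_\st\otimes_{W(\bar k)}\O_{\overline K})^{G_K}$ beyond the inclusion. Your sketch of the core lemma (``the compatibility forces it into $p^n(L_j\otimes C)+\ker(\cdots)$; taking a further colimit \dots one concludes the element is $0$'') restates the desired conclusion rather than confronting this difficulty.

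By contrast, the paper's proof never tensors with a period ring. Given $y\in M_i$ whose image in $M_{j_n}$ is divisible by $p^n$, it first stabilizes the kernels $\ker(f_{ij_n})\subset L_i$, and then uses the separatedness of $L$ only through finite free $\Z_p$-modules: the saturations $(\Q_p\otimes_{\Z_p}L_i)\cap L_{j_n}$ must stabilize, which yields a bound $p^{m_i}$, uniform in $n$, on the torsion of $L_{j_n}/f_{ij_n}(L_i)$; Lemma \ref{lem: property of inv} (3) and (4) transfer this to a uniform bound on the torsion of $M_{j_n}/g_{ij_n}(M_i)$, contradicting the unbounded divisibility of $g_{ij_n}(y)$. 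If you want to rescue your route, you would have to actually prove the separatedness of $L\otimes_{\Z_p}C$ for this particular $C$, and any honest proof of that seems at least as hard as — and likely ends up reproducing — the uniform-torsion argument the paper uses.
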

\begin{proof} Write  $f_{ij}: L_i \to L_j $ and $g_{ij} = \tilde M^*_\inv (f_{ij}) : M_i \to M_j$. Note that $ L_j / f_{ij} (L_i)$ and $M_j / g_{ij} (M_i)$ may not be torsion free.
Pick a $y \in M_i$ such that $ g_i (y) \not = 0$ where $g_i : M_i \to M$ is the natural map. We need to show that $ g_i (y) \not \in \bigcap\limits_{n =1}^\infty p^n M$. Suppose that  $g_i (y) $ is in $\bigcap\limits_{n =1}^\infty p^n M$. Then there exists a  subset $J=\{j_n\} \subset I$ with   $j _n < j_{n+1}$ and  $ y_n \in M_{j_n}$ such that  $p^ny_n =  g_{ij_n} (y)$. Consider the space $\t{Ker}(f_{ij_n})  \subset L_{i} $, which is an  increasing sequence of  saturated finite free $\O_K$-modules inside $L_i$. So they have to be stable after deleting finite many $j_n$. Hence without loss of generality, we may assume that all $\t{Ker}(f_{ij_n})$ are the same and then  $f_{ij_n} (L_i)$ are all isomorphic. Now we  decompose $f_{ij_n }: L _i \to L_{j_n}$ to $ L_i \overset {\tilde f_n} {\to} f_{ij_n } (L_i) \overset {\alpha_{n}}{\hookrightarrow} L_{j_n}$ and apply the functor $\tilde M^*_\inv $.  Then we get the map $ M_i \overset{\tilde g_{n}}{\to} \tilde M^*_\inv(f_{ij_n} (L_i)) \overset {\tilde \alpha_{n}}{\hookrightarrow} M_{j_n} $ where $\tilde g_{n}= \tilde M^*_\inv (\tilde f_{n})$ and $\tilde \alpha_{n} = \tilde M^*_{\inv}(\alpha_{n})$. Note that $\tilde \alpha_{n}$ is an injection because $\tilde M_\inv$ is left exact. Since  $g_{ij_n} = \tilde \alpha_n \circ \tilde f_n$, if $p^{a_n}$ and $p^{b_n}$ kills the torsion part of $ \tilde M^*_\inv (f_{ij_n} (L_i))/ \tilde g_n (M_i)$ and $M_{j_n} / \tilde \alpha_n (\tilde M^*_\inv (f_{ij_n} (L_i)))$ respectively, then $p^{a_n +b_n}$ kills the torsion part of  $M_{j_n}/ g_{ij_n} (M_i)$. Now we claim that there exists an integer $m_i$ such that $p^{m_i}$ kills the torsion part of $ L_{j_n}/ f_{ij_n} (L_i)$ for all $n$. Let us first accept the claim. Then Lemma \ref{lem: property of inv} (4) proves  that
$p ^{m_i}$ kills the torsion part of $M_{j_n} / \tilde \alpha_n (\tilde M^*_\inv (f_{ij_n} (L_i)))$. Since the exact sequences
$$0 \to \t{Ker}(f_{ij_n}) \to L_i  \overset {\tilde f_n }{\to}  f_{ij_n} (L_i) \to 0 $$
are isomorphic for all $n$, we see that $a_n$ is independent on $n$. Hence there exists an $m_i'$ independent on $n$ that $p^{m'_i}$ kills the torsion part of $M_{j_n}/g_{ij_n} (M_i)$, and this contradicts the existence of $y_n$. Hence $g_i (y)$ has to be $0$ and $M$ is $p$-adically separated.

Now it suffices to settle the claim. Since all $f_{ij_n}(L_i)$ are isomorphic, without loss of generality, we may assume that $f_{ij_n}$ are  injective for all $n$. Hence we may regard $L_i$ is a submodule of $L_{j_n}$ via $f_{ij_n}$. Let $T_n  = (\Q_p \otimes_{\Z_p} L_i)  \cap L_{j_n}$. Then $T_n$ are increasing finite free $\Z_p$-lattices inside $\Q_p \otimes_{\Z_p} L_i$. It suffices to show that $ T_m = T_{m+1}$ if $m$ is sufficiently large. In fact, if $T_n$ is keeping increasing its size then it is easy to show there must be a $x \in L_i$ such that $x = p^l y_{n_l}$ for a $y_{n_l} \in T_{n_l} \subset L_{n_l}$ for $l \geq 1$. But this contradicts that $L$ is $p$-adically separated.


\end{proof}

The above proposition is actually motivated by the situations in \cite{Emerton1} and \cite{Cais}. In the following, we discuss the situation in \S7.2 of \cite{Emerton1} and also use notations there. Fix a compact open subgroup $K^p$ of $\GL_2( \widehat{\Z}^p)$;  We
refer to $K^p$ as the ``tame level". Fix a finite extension $E$ of $\Q_p$ with ring of integers
$\O_E$. Write  $$\t{H}^1 (K^p)_{\O_E} :=  \varinjlim_{K_p} \t{H}^1 ( Y(K^p K_p)/_{\overline \Q}, \O_E), $$ where the inductive limit is taken over all open subgroups $K_p$ of $\GL_2(\Z_p)$, $Y(K^p K_p)$ is the modular curve determined by $K^p K_p$ and the cohomology is \'etale cohomology. By Lemma 7.2.1 in \cite{Emerton1}, $\t{H}^1 (K^p) _{\O_E}$ is torsion free and $p$-adically separated. Set $\widehat{\t{H}} ^1 (K^p) _{\O_E}$ the $p$-adic completion of $ \t{H} ^1 ( K^p)_{\O_E}$ and $\widehat{\t{H}} ^1 (K^p) _{E} := E \otimes_{\O_E} \widehat{\t{H}} ^1 (K^p) _{\O_E}$. Lemma 7.2.5 \emph{loc. cit.} showed that $\widehat{\t{H}} ^1 ( K^p)_{E}$ is an admissible unitary representation of $\GL_2 (\Q_p)$.

Since we only concerns the local properties at $p$, we restrict all  the above Galois modules (they are $\Z_p[\gal (\overline \Q/ \Q)]$-modules) to $\gal (\overline{\Q_p}/\Q_p )$ but still use the same notations. Now apply the functor $\tilde M^*_\inv $ to $ \t{H}^1 ( Y(K^p K_p)/_{\overline \Q}, \O_E)$ and set $M_{K_pK^p} := \tilde M^*_\inv (\t{H}^1 ( Y(K^p K_p)/_{\overline \Q}, \O_E)). $ By comparison theorem, $M_{K_pK^p}$ is obviously a $\Z_p$-lattice in the de Rham cohomology $\t{H}^1_\dR ( Y(K^p K_p)_{\overline{\Q_p}}, E).  $ Proposition \ref{prop: directlimit} implies that $ M_{K^p} : = \varinjlim_{K_p} M_{ K_p K^p}$ is $p$-adic separated. Define $ \widehat{\t{H}}^1_\dR ( K^p) $ the $p$-adic completion of $M_{K^p}$ and $  \widehat{\t{H}}^1_\dR ( K^p)_E := E \otimes_{\O_E}  \widehat{\t{H}}^1_\dR ( K^p)  $. By the construction of $\tilde M_\inv$, we easily see that $\widehat{\t{H}}^1_\dR ( K^p)_E $ has a continuous action of $\GL_2 (\Q_p)$. It is natural to ask the following questions:
\begin{question}\label{Q} \begin{enumerate} \item What we can say on the $\GL_2 (\Q_p)$-action on $ \widehat{\t{H}}^1_\dR ( K^p)_E$? Is it an admissible unitary representation of $\GL_2 (\Q_p)$?
\item Is there any relation between $\widehat{\t{H}} ^1 ( K^p)_{E}$  and $\widehat{\t{H}} ^1_\dR ( K^p)_{E}$?  Could we build a comparison theorem to compare them?
\end{enumerate}
\end{question}

Let $T$ be a $G_K$-stable $\Z_p$-lattice inside a De Rham representation. We may define $M^{W(\bar k)}_\inv (T) := \varinjlim_{F}M_{\inv, F}(T^\v)$ where $F$ runs through all finite extensions of $K$. It is easy to see that $M^{W(\bar k)}_\inv(T)$ is a finite free $W(\bar k)$-module with a $(\varphi, N, G_K)$-action and the $G_K$-action factors through a  finite quotient of $G_K$.  We note that Proposition \ref{prop: directlimit} is still valid after replacing $\tilde M_\inv$ by $M^{W(\bar k)}_\inv$,  because the proof only uses the Lemma \ref{lem: property of inv} (3) and (4) and it is easy to check that these are still valid for $M^{W(\bar k)}_\inv$. If we apply the functor $M^{W(\bar k)}_\inv$ to $\t{H}^1 (K^p)_{\O_E}$ the above then the direct limit of $M^{W(\bar k)} _ \inv ( \t{H}^1 ( Y (K^p K_p)/ {\overline \Q_p}, \O_E))$ is separated and can be completed. Denote this completion $\widehat{\t{H}}^{1, W(\bar k)}_\st (K^p)_{\O_E}$, which has a natural $(\varphi, N , G_K)$-action and a $\GL_2 (\Q_p)$-action. It is natural  ask Question \ref{Q} (1) for $\widehat{\t{H}}^{1, W(\bar k)}_\st (K^p)_{\O_E}$ again and the question that what are relations between $\widehat{\t{H}}^{1, W(\bar k)}_\st (K^p)_{\O_E}$, $\widehat{\t{H}}^1 (K^p) _E $ and $\widehat{\t{H}}^1 _\dR (K^p)_E$.

Finally, we may define  $M^{W(\bar k)}_\st  (T) := \varinjlim_{F}M_{\st, F}(T^\v)$, which is another $W(\bar k)$-lattice inside $\Q_p \otimes _{\Z_p}M^{W(\bar k)} _{\inv} (T)$ which is $(\varphi, N , G_K)$-stable. Though the functor $M_\st^{W(\bar k)}$ enjoys many good properties (for example, $ M_{\st, F} (\t{H}^1 (Y(K^p K_p)/ {\overline \Q_p}, \O_E))$ does has geometric interpretation  if $Y(K^p K_p)$ has a good reduction over $F$), we do not know the direct limit of $M^{W(\bar k)}_ \st (\t{H}^1 (Y(K^p K_p)/ {\overline \Q_p}, \O_E)$ is $p$-adic separated as the functor $M_\st$ in general is not left exact. Hence its $p$-adic completion may contain few information to understand $\widehat{\t{H}}^1 (K^p)_E$.

\section{Errata for \cite{liu-wd}}

Theorem 2.1.3 in \cite{liu-wd} claimed that the functor $M_\st$ is left exact. Unfortunately this is false as Example \ref{ex: 2} explains. Given  an exact sequence of lattices in semi-stable representations $0 \to T' \to T \to T'' \to 0$, Lemma 2.5.4 in \cite{liu-wd} showed  that the associated sequence of Kisin module  $0 \to \M'' \to \M \to \M' \to 0$ is left exact. But it is not true in general that the sequence $$ 0 \to \M''/u\M'' \to \M/ u \M  \to \M'/ u \M ' \to 0  $$ is exact on $\M/ u\M$. This is  exactly the mistake (of the sentence right before Lemma 2.5.4 in \cite{liu-wd}) when proceeded the proof that $M_\st$ is left exact.

Theorem 2.1.3 in \cite{liu-wd} except this claim is still correct and we have not used this claim in \cite{liu-wd} and our other papers.
\bibliographystyle{amsalpha}
\bibliography{BIBLIO1}


\end{document}